\numberwithin{equation}{section}
\newtheorem{theorem}{Theorem}[section]
\newtheorem{lemma}[theorem]{Lemma}
\newtheorem{proposition}[theorem]{Proposition}
\newtheorem{corollary}[theorem]{Corollary}
\newtheorem{definition}[theorem]{Definition}
\theoremstyle{remark}
\newtheorem{remark}[theorem]{Remark}
\newtheorem{theorem*}[theorem]{Theorem}
\newtheorem{lemma*}[theorem]{Lemma}
\newtheorem{proposition*}[theorem]{Proposition}
\newtheorem{corollary*}[theorem]{Corollary}
\newtheorem{assumption*}[theorem]{Assumption}
	\renewcommand{\a}{\alpha}
	\newcommand{\g}{\gamma}
	\newcommand{\z}{\zeta}
	\newcommand{\s}{\sigma}
	\renewcommand{\o}{\omega}
	\newcommand{\E}{{\mathbb E}} 
	\newcommand{\N}{{\mathbb N}}  
	\renewcommand{\P}{{\mathbb P}} 
	\newcommand{\R}{{\mathbb R}} 
	\newcommand{\T}{{\mathbb T}}
	\let\cal=\mathcal
	\newcommand{\FF}{{\cal F}}
	\newcommand{\HH}{{\cal H}}
	\newcommand{\cB}{{\cal B}}
	\newcommand{\cF}{{\cal F}}
	\newcommand{\cR}{{\cal R}}
	\newcommand{\bz}{{\boldsymbol z}}
        \newcommand{\1}{\mathbbm{1}}
	\renewcommand{\log}{\ln} 
	\newcommand{\eee}{\mathrm{e}}
	\renewcommand{\d}{\mathrm{d}}
	\renewcommand{\dd}{\mathtt{d}}
	\newcommand{\kl}[1]{\left(#1\right)}
	\newcommand{\bkl}[1]{\big(#1\big)}
	\newcommand{\bbkl}[1]{\Big(#1\Big)}
	\newcommand{\gkl}[1]{\left\{#1\right\}}
    \newcommand{\ga}[1]{
        \langle #1 \rangle
    }
    \newcommand{\bga}[1]{
        \big\langle #1 \big\rangle
    }
    \newcommand{\bbga}[1]{
        \Big\langle #1 \Big\rangle
    }
        \newcommand*\iid{i.i.d.\ }
	\newcommand*\wrt{w.r.t.\ }
\newcommand{\PR}[1]{\P\left(#1\right)}
\newcommand{\EX}[1]{\E\left[#1\right]}
\newcommand{\cEX}[2]{\E\left[#1 \,\middle|\, #2 \right]}
\newcommand\brac[1]{\left(#1\right)}
\newcommand\bbrac[1]{\big(#1\big)}
\newcommand\cbrac[1]{\left\{#1\right\}}
\newcommand{\q}{\mathsf{q}}
\newcommand{\qm}{Q^{(M)}}
\newcommand{\qml}{\qm_\leq}
\newcommand{\qmt}{\tilde{Q}^{(M)}}
\newcommand{\qmeq}{\qm_{\text{equidist}}}
\newcommand{\pp}{\mathsf{p}}
\newcommand{\yy}{\mathsf{y}}
\newcommand{\HHH}{\mathsf{H}}
\newcommand\leaves[1]{\cbrac{-1,1}^{#1}}
\def\sfrac#1#2{{\textstyle{\frac{#1}{#2}}}}
\newcommand{\restr}[2]{#1\vert_{#2}}
\newcommand\ar{A_{\mathrm{ext.}}}
\begin{document}
	\title{The Free Energy of an Enriched Continuous Random Energy Model in the Weak Correlation Regime}
	\author{
		Alexander~Alban\thanks{
			Institut f\"{u}r Mathematik,
			Johannes Gutenberg-Universit\"{a}t,
			GERMANY.
			\href{mailto:aalban@uni-mainz.de}
			\texttt{aalban@uni-mainz.de}
		}
		\and
		Fu-Hsuan~Ho\thanks{
			Department of Mathematics, Weizmann Institute of Science, ISRAEL.
			\href{mailto:fu-hsuan.ho@weizmann.ac.il}{\texttt{fu-hsuan.ho@weizmann.ac.il}}
		}
		\and
		Justin~Ko\thanks{
Department of Statistics and Actuarial Science, 
			University of Waterloo, CANADA.
            \href{mailto:justin.ko@uwaterloo.ca}{\texttt{justin.ko@uwaterloo.ca}}
		}
	}
	\date{\today}
	\maketitle
	\begin{abstract}
         We revisit the proof of the limiting free energy of the continuous random energy model (CREM) using the Hamilton--Jacobi approach for mean-field disordered systems. To achieve this, we introduce an enriched model that interpolates between the CREM and the Ruelle probability cascade. We focus on the weak correlation regime, where the CREM’s covariance function $A$ is bounded above by the identity function.
		
		In the weak correlation regime, we show that the free energy is given by the Hopf formula. The resulting expression is independent of $A$, confirming that in this regime the free energy does not depend on the precise form of the covariance function. 
        Outside of the weak correlation regime, the Hamilton–Jacobi framework no longer applies.
        Moreover, we provide an example where a formal application of the associated variational principle fails to yield the correct free energy. 
	\end{abstract}
	
	%\tableofcontents

	\section{Introduction} \label{sec:intro}

	The continuous random energy model (CREM) was introduced in \cite{BK2} as an extension of the generalized random energy model (GREM) in \cite{ derrida1985generalization,GD86b}.  Thanks to the tractability of these models, the limiting free energy of the GREM was computed in \cite{capocaccia1987existence} and then extended to the CREM in \cite{BK2}.
    Both of these models were introduced as an analytically tractable model of spin glasses  \cite{guerra2003broken, PUltra,parisi1979infinite,Talagrand}. In contrast to the other spin glass models, the limits of the free energies in the CREM and GREM have a simple form of the limit.

	More recently, a PDE approach to understanding mean-field disordered systems was developed to shed new insights on the free energies of these models. While most of the work in these areas has been focused on spin glasses \cite{Mourrat2021basis,Mourrat2022parisi,mourrat2020extending} or statistical inference \cite{Chen2022a,Chen2022c,Chen2022b,Chen2023,dominguez2024mutual,Mourrat2020finiterank,Mourrat2021basis}, the tools developed are quite general and can be adapted to study other class of models. In this work, we apply these approaches to a model where the spin configurations do not have independent coordinates. In particular, we revisit the CREM free energy from this point of view and show that the limiting free energy can be expressed as the solution to a particular Hamilton--Jacobi equation. 
    
    We study a generalization of the free energy of the CREM with an enrichment and prove a Hopf variational formula for the limiting free energy in the weak correlation regime. We recall that the weak correlation regime corresponds to covariance functions that are bounded by the identity. This formula reduces to the case of the classical CREM without enrichment. The Hopf formula can be solved explicitly for the CREM, and we obtain a one-parameter variational representation of the free energy. We get as an immediate corollary that the free energy is universal for all models in the weak correlation regime. 
	
	The case outside the weak correlation regime is trickier, since it does not fall under the existing theory developed for monotonic non-linearities \cite{chen_hamilton-jacobi_2023}. This regime is interesting because it is a one-dimensional example of a model non-convex covariance structure which has been a recent topic of great interest in \cite{bates2025balancedmultispeciesspinglasses,chen2024freeenergynonconvexmultispecies, ChenMourrat2023nonconvexVectorSpin,JCnonconvex}. We show that a naive extension of the Hopf formula does not agree with the known formula of the limiting free energy outside of the weak correlation regime. Hopefully, this work provides the groundwork for a simpler toy model, which can be used as an analytically tractable model to understand the non-convex mean field models in future work.

	% ----------------------------------------------------------------------------------
	\subsection{Notation and Definitions} \label{sec:notation and def}
	% ----------------------------------------------------------------------------------
	
	Let $A\colon[0,1]\to[0,1]$ be a right-continuous, increasing function such that $A(0)=0$ and $A(1)=1$.
	For $N\in\N$, the \emph{continuous random energy model} (CREM)  with the covariance function $A$ is a centered Gaussian process $\kl{H_N^A(\sigma)}_{\s\in\leaves{N}}$ with covariance
	\begin{align}\label{eq:3DefCREM}
		\EX{H_N^A(\s)\, H_N^A(\tilde\s)}
		= N A\kl{\sfrac{\s \wedge \tilde\s}{N}},  \qquad \s,\tilde\s \in \{-1,1 \}^N.
	\end{align}
	The \emph{overlap} $\s\wedge\tilde\s$ in \eqref{eq:3DefCREM} is defined as
	\begin{align}\label{eq:3DefOverlap}
		\s \wedge \tilde \s \coloneqq \max \gkl{i=0,\dots,N\colon \restr{\s}{i}=\restr{\tilde\s}{i}},
	\end{align} 
	where for $\s = (\s_1,\dots,\s_N) \in \leaves{N}$, we write $\restr{\s}{0}=\varnothing$ and $\restr{\s}{i}=(\s_1,\dots,\s_i)$ for $i=1,\dots,N$.
	
    We now define the enrichment. For $M\in\N$, define $Q^{(M)}$ to be the subset of non-negative and increasing step functions defined on $[0,1)$ with at most $M$ jumps by
	\begin{align}\label{eq:3qPiecewise}
		Q^{(M)}
		=
		\left\{
		\q_M = \sum_{k=0}^M q_k \1_{[\z_k, \z_{k+1})}
		\,\bigg|\,
		\begin{array}{l}
			0 = \zeta_0 < \zeta_1  < \dots < \zeta_{M - 1} < \zeta_M < \zeta_{M  + 1} = 1 \\
			0 = q_{-1} \leq q_0 \leq \dots \leq q_{M- 1} \leq q_M < \infty
		\end{array}
		\right\}.
	\end{align}
    Given $\q_M = \sum_{k=0}^M q_k \1_{[\z_k, \z_{k+1})} \in \qm$,
	let $\kl{Y_{\q_M}(\sigma,\alpha)}_{\s\in\leaves{N}\!,\, \a\in\N^M}$ be a centered Gaussian process with the covariance function
	\begin{align}\label{eq:3YCovariance}
		\E [Y_{\q_M}(\s, \a) Y_{\q_M}(\tilde\s, \tilde\a)] 
		= \kl{\s \wedge \tilde\s} q_{\a \wedge \tilde\a}, \qquad \forall\: \s,\tilde\s \in \{-1,1 \}^N\! , \, \a,\tilde\a \in \N^M,
	\end{align}    
	where $\a \wedge \tilde\a \coloneqq \max \gkl{k=0,\dots,M\colon \restr{\a}{k}=\restr{\tilde\a}{k}}$.
	Note that
	\begin{align}\label{eq:3YwrtZ}
		\kl{Y_{\q_M}(\sigma,\alpha)}_{\s\in\leaves{N}\!,\, \a\in\N^M} \overset{\d}{=} \kl{\textstyle\sum_{i = 1}^N \sum_{k = 0}^M (q_k - q_{k - 1} )^{1/2} z_{\restr{\s}{i},\restr{\a}{k}}}_{\s\in\leaves{N}\!,\, \a\in\N^M},
	\end{align}
	where each $z_{\restr{\s}{i},\restr{\a}{k}}$ is from a family of \iid standard Gaussian random variables. 

    The law of the $\alpha$ is distributed according to the weights of the \emph{Ruelle cascades} $\kl{v_\a}_{\a\in\N^M}$ with parameters $\kl{\z_k}_{k=1,\dots,M}$, where \begin{align}
		0 &= \zeta_0 < \zeta_1 < \zeta_2 < \dots < \zeta_{M - 1} < \zeta_M < \zeta_{M  + 1} = 1.
	\end{align}
    These weights are constructed in the following way:
	Let
	$u_{(1)} > u_{(2)} > \dots$
	be the ordered atoms of a Poisson point process on $\R_{>0}$ with intensity $\z_1 y^{-1-\z_1}\, \d y$.
	For each $k \in {1,\dots,M-1}$ and each $\g\in\N^k$ independently, 
	$
	u_{(\g,1)} > u_{(\g,2)} > \dots
	$
	are sampled as the ordered atoms of a Poisson point process on $\R_{>0}$ with intensity $\z_{k+1} y^{-1-\z_{k+1}}\, \d y$.
	Recalling the notation $\restr{\a}{k}=(\a_1,\dots,\a_k)$ for $k=1,\dots,M$, for $\a= (\a_1,\dots, \a_M)\in\N^M$, we set
	\begin{align}\label{eq:3DefValphaWalpha}
		w_\a \coloneqq \prod_{k =1}^M u_{\restr{\tilde\a}{k}},
		\quad\text{and}\quad
		v_\a \coloneqq  
		\frac{ 
			w_\a
		}{
			\sum_{\tilde\a \in \N^M} w_\a
		}.
	\end{align} 
	We call $(w_\a)_{\a\in\N^M}$ the \emph{unnormalized weights} of $(v_\a)_{\a\in\N^M}$.
	Since $\sum_{\tilde\a \in \N^M} w_\a$ is finite with probability $1$ (see for example \cite[Lemma~5.23]{JCBook}), $\kl{v_\a}_{\a\in\N^M}$ is well-defined.
	
	Assuming that the processes $\kl{H_N^A(\sigma)}_{\s\in\leaves{N}}$, $\kl{Y_\q(\sigma,\alpha)}_{\s\in\leaves{N}\!,\, \a\in\N^M}$, and $\kl{v_\a}_{\a\in\N^M}$ are independent, the \emph{enriched Hamiltonian} of the CREM with covariance $A$ is defined by 
	\begin{align*}
		H_N(t,\q_M,\s,\a)&\coloneqq\sqrt{2t}\, H_N^A(\sigma) - Nt  + \sqrt{2}\,Y_{\q_M}(\sigma,\alpha) - N q_M.
	\end{align*}
	Also, we define the \emph{enriched free energy} by
	\begin{align}\label{eq:enrichedFE}
		F_N(t,\q_M) 
		&\coloneqq - \frac{1}{N} \EX{\textstyle
			\log\kl{ \sum_{\alpha \in \N^M} v_\a \sum_{\sigma \in \{-1,1\}^N}   
				\exp\bkl{H_N(t,\q_M,\s,\a)}}
		}.
	\end{align}
    \begin{remark}
        This choice of enrichment is motivated by the one introduced to study the SK model (see for example \cite[Chapter~6]{JCBook}) as it encodes the richest possible overlap structure for these models. Such an enrichment requires no assumptions on the properties of the support of the overlaps under the Gibbs measure a priori. Furthermore, this form of enrichment may become necessary when studying the non-convex models.
    \end{remark}

    We now introduce the following path spaces:
	\begin{align}
		Q &\coloneqq \gkl{\q\colon [0,1) \to \R_{\geq 0}; \: \q \text{ is right-continuous and increasing}},\notag\\
		Q_p &\coloneqq Q \cap L_p([0,1), \R), \qquad \forall\:p \in [1, \infty].
	\end{align}
    The following proposition asserts that we can extend the discrete paths $\q_M$ in the domain of $F_N$ to $\R_{\geq 0} \times Q_1$.
	\begin{proposition}\label{prop:LipschitzFn}
		Let $F_N$ be the enriched free energy of the CREM defined in \eqref{eq:enrichedFE}.
		For all $\q_{M_1},\tilde\q_{M_2} \in \bigcup_{M=1}^\infty\qml$ and all $t \geq 0$,
		\begin{align}\label{eq:FNLipschitzClaim}
			\abs{F_N(t,\q_{M_1}) - F_N(t,\tilde\q_{M_2})} \leq \| \q_{M_1} - \tilde\q_{M_2}\|_1.
		\end{align}
		In particular, there is a unique extension of $F_N$ to the space $\R_{\geq 0} \times Q_1$, which satisfies \eqref{eq:FNLipschitzClaim} on $\R_{\geq 0} \times Q_1$.
	\end{proposition}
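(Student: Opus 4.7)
The plan is to establish the Lipschitz bound for discrete paths via a Guerra–Toninelli Gaussian interpolation, then extend to $Q_1$ by density of step functions. First I would relabel $\q_{M_1},\tilde\q_{M_2} \in \bigcup_{M=1}^\infty \qml$ on a common refinement of their jump points, so that
\begin{align*}
\q_{M_1}=\sum_{k=0}^{M}q_k\,\1_{[\z_k,\z_{k+1})},
\quad
\tilde\q_{M_2}=\sum_{k=0}^{M}\tilde q_k\,\1_{[\z_k,\z_{k+1})}
\end{align*}
share a common partition $0=\z_0<\z_1<\cdots<\z_{M+1}=1$ (this is legitimate because inserting an extra breakpoint simply repeats a value). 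For $s\in[0,1]$ set the convex interpolant $\q_s := (1-s)\q_{M_1}+s\tilde\q_{M_2}\in \qm$, and form the Gaussian interpolating field $Y_s:=\sqrt{1-s}\,Y^{(1)}+\sqrt{s}\,Y^{(2)}$, where $Y^{(1)},Y^{(2)}$ are independent with the laws of $Y_{\q_{M_1}},Y_{\tilde\q_{M_2}}$ (built from independent \iid Gaussians via \eqref{eq:3YwrtZ}). Since $Y_s$ and $Y_{\q_s}$ have the same covariance, one may replace one by the other in the definition of $F_N(t,\q_s)$, and it suffices to control $\tfrac{d}{ds}F_N(t,\q_s)$.

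A standard Gaussian integration-by-parts applied separately to $Y^{(1)}$ and $Y^{(2)}$ should yield
\begin{align*}
\frac{d}{ds}F_N(t,\q_s)=\frac{1}{N}\,\E\bga{\,(\s\wedge\tilde\s)\bkl{\tilde q_{\a\wedge\tilde\a}-q_{\a\wedge\tilde\a}}\,}_{\!s},
\end{align*}
where $\langle\cdot\rangle_s$ denotes the two-replica Gibbs expectation under the enriched Hamiltonian at $(t,\q_s)$. Indeed, the deterministic drift $-Nq_{s,M}$ contributes $-N(\tilde q_M-q_M)$, while Gaussian IBP on $\partial_s Y_s=-\tfrac{1}{2\sqrt{1-s}}Y^{(1)}+\tfrac{1}{2\sqrt{s}}Y^{(2)}$ produces a diagonal self-overlap term equal to $N(\tilde q_M-q_M)$ that cancels the drift, leaving only the two-replica term above. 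Using $\s\wedge\tilde\s\leq N$ and moving the absolute value inside gives
\begin{align*}
\bigg|\frac{d}{ds}F_N(t,\q_s)\bigg|\;\leq\;\E\bga{\,|\tilde q_{\a\wedge\tilde\a}-q_{\a\wedge\tilde\a}|\,}_{\!s}.
\end{align*}

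To identify this right-hand side with $\|\q_{M_1}-\tilde\q_{M_2}\|_1$ I would invoke the Ruelle-cascade invariance. Through the representation \eqref{eq:3YwrtZ}, the enriched Hamiltonian is tree-indexed in $\a$, so the Gibbs measure marginalized to $\a$ is a positive tree-measurable tilt of the cascade $(v_\a)$. By the standard invariance of Ruelle cascades under such tilts (as developed in \cite{JCBook}), the induced law of $\a\wedge\tilde\a$ under two independent replicas coincides with the cascade law $\P(\a\wedge\tilde\a=k)=\z_{k+1}-\z_k$; hence for every $f$,
\begin{align*}
\E\bga{\,f(\a\wedge\tilde\a)\,}_{\!s}=\sum_{k=0}^M(\z_{k+1}-\z_k)\,f(k).
\end{align*}
Applied to $f(k)=|\tilde q_k-q_k|$ this sum equals $\|\q_{M_1}-\tilde\q_{M_2}\|_1$, and integrating $s\in[0,1]$ proves \eqref{eq:FNLipschitzClaim} on $\bigcup_{M=1}^\infty\qml$. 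Since step functions are dense in $Q_1$ with respect to $\|\cdot\|_1$, the Lipschitz bound yields a unique $1$-Lipschitz extension of $F_N$ to $\R_{\geq 0}\times Q_1$.

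The main obstacle is the invocation of Ruelle invariance for the $\a$-marginal Gibbs measure after summing out $\s$: one must verify that the functional $\a\mapsto\sum_\s\exp H_N(t,\q_s,\s,\a)$ is tree-compatible in the sense required by the invariance principle. This is essentially immediate from the way $z_{\s|_i,\a|_k}$ splits cleanly along the cascade tree, but it is the step that relies most essentially on the prior theory summarized in \cite{JCBook}. All other pieces — the reduction to a common partition, the IBP cancellation with the $-Nq_{s,M}$ drift, and the density argument for the extension — are routine.
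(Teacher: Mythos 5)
Your proof is correct and, despite the Guerra--Toninelli packaging, lands on essentially the same computation as the paper's. The paper proceeds by establishing the exact partial derivatives $\partial_{q_k}F_N(t,q)=\E\bga{\1_{\a\wedge\tilde\a=k}\,\sfrac{\s\wedge\tilde\s}{N}}$ via Gaussian integration by parts (Proposition~\ref{prop:PartialDeriv}), then integrates along the convex combination $\q^{(\lambda)}=\lambda\q+(1-\lambda)\tilde\q$ and applies the cascade identity $\E\bga{\1_{\a\wedge\tilde\a=k}}=\zeta_{k+1}-\zeta_k$; your sum-of-squares interpolant $\sqrt{1-s}\,Y^{(1)}+\sqrt s\,Y^{(2)}$ has the same covariance as $Y_{\q^{(s)}}$, so its $s$-derivative $\frac1N\,\E\bga{(\s\wedge\tilde\s)(\tilde q_{\a\wedge\tilde\a}-q_{\a\wedge\tilde\a})}$ is just the chain-rule contraction of the paper's formula, and you then invoke exactly the same Ruelle overlap identity. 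The paper also devotes separate steps to handling repeated $q$-parameters and to Whitney-extending the derivatives to the boundary of the cone (its Steps 2--3), which the Guerra--Toninelli setup sidesteps more cleanly since $Y_s$ is well-defined even when $\q_s$ degenerates; this is a mild simplification, at the cost of having to be slightly careful about the $s^{-1/2}$, $(1-s)^{-1/2}$ singularities at the endpoints (which cancel after IBP, so this is routine). Both proofs then close by density of step functions (Lemma~\ref{lem:DensityQM}).
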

	Our main result shows that the limiting free energy has a variational formula for almost all covariance functions $A$ in the weak correlation regime.
	\begin{theorem}\label{thm:main}
		Suppose that $A(x)\leq x$ has a finite left-derivative at $1$.
		For all $t>0$ and $\q\in Q_1$,
		\begin{align}
			f(t,\q)
			\coloneqq
			\lim_{N\uparrow \infty} F_N(t,\q) 
			= 
			\sup_{\lambda\in [0,1)}
			\left\{
			\lambda t 
			+ \int_{1-\lambda}^1 \q(u) \d u
			- \frac{\log 2}{1-\lambda}
			\right\}
			. 
			\label{eq:main}
		\end{align}
		In particular, we have
		\begin{align}
			f(t,0) = \begin{cases}
				- \log 2, & \text{if } t\leq \log 2,\\
				t -2 \sqrt{t \log 2}, & \text{if } t> \log 2.
			\end{cases}
			\label{eq:main.q=0}
		\end{align}
	\end{theorem}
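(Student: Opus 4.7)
The plan is to use the Hamilton--Jacobi / Hopf formula framework. The right-hand side of \eqref{eq:main} is itself a Hopf formula: the $\q$-dependent term $\int_{1-\lambda}^1 \q(u)\,\d u - (\log 2)/(1-\lambda)$ is the initial datum $f(0, \q)$ evaluated at a shifted path, and the linear term $\lambda t$ reflects that, in the weak correlation regime, the CREM Hamiltonian acts on the enrichment as the constant shift $\q \mapsto \q + t \1_{[0,1)}$. Accordingly the proof factors into two ingredients: an explicit computation of $f(0, \q)$ via a pure Ruelle cascade calculation, and a Gaussian comparison that reduces $f(t, \q)$ to $f(0, \q + t\1_{[0,1)})$.

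\textbf{Step 1: Initial datum.} At $t = 0$ the CREM Hamiltonian drops out, and using \eqref{eq:3YwrtZ} the $\sigma$-sum in $F_N(0, \q_M)$ factorises over the $N$ coordinates, each contributing a deterministic $\cosh$ term. The $\alpha$-sum is then handled by the Bolthausen--Sznitman recursion for Ruelle cascades, producing an explicit formula in terms of the parameters $(\zeta_k, q_k)$. Optimising as $N, M \to \infty$ and using Proposition~\ref{prop:LipschitzFn} to extend to $\q \in Q_1$ gives the Parisi-type formula
\begin{equation*}
f(0, \q) = \sup_{\lambda \in [0, 1)} \left\{\int_{1-\lambda}^1 \q(u)\,\d u - \frac{\log 2}{1-\lambda}\right\}.
\end{equation*}

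\textbf{Step 2: Gaussian comparison to $t = 0$.} A direct covariance computation shows that $\sqrt{2t}\,H_N^{A_0}(\sigma) + \sqrt{2}\,Y_\q(\sigma, \alpha)$ and $\sqrt{2}\,Y_{\q + t\1_{[0,1)}}(\sigma, \alpha)$ have the same joint law in $(\sigma, \alpha)$, where $A_0(x) = x$; since the normalisation constants match as well, this gives the exact identity
\begin{equation*}
F_N(t, \q)\bigm|_{A = A_0} = F_N(0, \q + t\1_{[0,1)}).
\end{equation*}
Combining with Kahane's inequality (less off-diagonal covariance $\Rightarrow$ larger $\E \log Z$) under the hypothesis $A(x) \leq A_0(x)$ yields $F_N(t, \q) \leq F_N(t, \q)\bigm|_{A = A_0}$, so
\begin{equation*}
\limsup_{N \to \infty} F_N(t, \q) \leq \sup_{\lambda \in [0, 1)} \left\{\lambda t + \int_{1-\lambda}^1 \q(u)\,\d u - \frac{\log 2}{1-\lambda}\right\}.
\end{equation*}
For the matching lower bound, I would fix $\lambda \in [0, 1)$ and construct a test cascade by appending a one-level Ruelle cascade with parameter $\zeta = 1 - \lambda$ to the enrichment, apply Jensen's inequality to the CREM term, and use $A'(1^-) < \infty$ to absorb the residual CREM boundary fluctuations into an $o_N(1)$ error, producing $\liminf_N F_N(t, \q) \geq \lambda t + \int_{1-\lambda}^1 \q\,\d u - (\log 2)/(1-\lambda)$ for every $\lambda$; taking the supremum then closes the gap.

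For \eqref{eq:main.q=0}, the formula at $\q \equiv 0$ is an elementary one-variable optimisation of $\lambda \mapsto \lambda t - (\log 2)/(1-\lambda)$: the critical point $\lambda^\star = 1 - \sqrt{(\log 2)/t}$ lies in $(0, 1)$ iff $t > \log 2$, yielding the value $t - 2\sqrt{t \log 2}$; otherwise the supremum is attained at $\lambda = 0$ with value $-\log 2$. The hardest part will be the matching lower bound in Step~2: since the CREM contribution is not monotone under the covariance perturbations that would deliver a Kahane-type lower bound, one has to build a lower-bound test configuration by hand, and the assumption $A'(1^-) < \infty$ is exactly what is needed to keep the $H_N^A$ boundary fluctuations from leaking into that bound.
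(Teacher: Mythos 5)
Your Step~2 identity is correct and gives a genuinely cleaner route to the upper bound than the paper. For $A = \mathrm{id}$ the processes $\sqrt{2t}\, H_N^{\mathrm{id}}(\sigma) + \sqrt{2}\, Y_\q(\sigma, \alpha)$ and $\sqrt{2}\, Y_{\q + t\1_{[0,1)}}(\sigma, \alpha)$ have the same covariance $2(\s\wedge\tilde\s)(t + q_{\a\wedge\tilde\a})$, and the deterministic offsets $-Nt - Nq_M = -N(q_M + t)$ match, so $F_N^{\mathrm{id}}(t, \q) = F_N(0, \q + t\1_{[0,1)})$ exactly. Combining this with the Gaussian comparison $F_N^A \leq F_N^{\mathrm{id}}$ for $A \leq \mathrm{id}$ (the paper's Lemma~\ref{lem:GP}) and the initial-condition formula of Theorem~\ref{thm:InitCond} gives $\limsup_N F_N^A(t, \q) \leq \Psi(\q + t\1_{[0,1)})$, and a one-dimensional optimisation shows $\Psi(\q + t\1_{[0,1)})$ equals the right-hand side of \eqref{eq:main}. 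The paper instead proves the upper bound via a subsolution argument (Proposition~\ref{prop:subsolution}); your observation bypasses the PDE machinery entirely for this half.

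However, Step~1 has a basic error that breaks the plan as written. At $t=0$ the $\sigma$-sum does \emph{not} factorise over the $N$ coordinates: the enrichment $Y_\q(\sigma, \alpha)$ depends on $\sigma$ through its prefixes $\restr{\s}{i}$ in \eqref{eq:3YwrtZ}, not through the individual coordinates $\s_i$, so $\sum_\sigma \exp\bkl{\sqrt{2}\, Y_\q(\sigma, \alpha)}$ is the partition function of a branching random walk on the binary tree, not a product of $\cosh$'s. This is precisely the obstruction the paper flags ("the computation of the initial conditional\ldots does not factorize nicely since it couples two types of spins ($\sigma$ and $\alpha$) with competing geometry"), and the actual proof of Theorem~\ref{thm:InitCond} is a substantial argument: the Ruelle cascade recursion (Lemma~\ref{lem:RecursiveAveragingFreeEnergy}) reduces $F_N(0,\q)$ to nested conditional moments of BRW partition functions, and matching upper/lower bounds are then obtained via subadditivity and Jensen on one side, and a first-moment truncation coupled with Biggins' shape theorem on the other. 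Without this input the formula for $f(0,\q)$ is not established.

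Your lower-bound sketch in Step~2 is also too vague to evaluate and misattributes the role of the hypothesis. The paper obtains the lower bound by showing $\liminf_N F_N^A$ is a viscosity supersolution (Proposition~\ref{prop:supersolution}), invoking the comparison principle, and evaluating the Hopf formula; the hypothesis $A'(1^-) < \infty$ enters only to construct a convex, Lipschitz $\tilde A \leq A$ (Lemma~\ref{lem:A low bnd}) so that the Hamilton--Jacobi theory of \cite{chen_hamilton-jacobi_2023} applies, not to control boundary fluctuations of $H_N^A$. Your plan of appending a one-level cascade and applying Jensen is not developed to the point where one could check whether it produces the claimed inequality for general $A \leq \mathrm{id}$, and since Kahane's inequality goes the wrong way for a lower bound (as you note), something like the supersolution/comparison argument is doing real work here that your sketch does not replace.
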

	\begin{remark}\label{rem:reparam}
		The sign conventions and normalizations in \eqref{eq:enrichedFE} are adopted to facilitate the analysis in the remainder of the paper.
		To remain consistent with standard conventions in the literature on continuous random energy models (CREM), we note that the inverse temperature is given by \(\beta = \sqrt{2t}\). 
		To recover the usual free energy formula $f_{\mathrm{REM}}(\beta)$ in the weak correlation regime, one should look at $-f(\beta^2/2,0)+\frac{\beta^2}{2}$ which gives
	\begin{align}\label{eq:3FECREMFormulaConvexCase}
			f_{\mathrm{REM}}(\beta)
			=
			-f\left(\frac{\beta^2}{2},0\right)+\frac{\beta^2}{2}
			=
			\begin{cases}
				\log 2 + \frac{\beta^2}{2}, & \text{if } \beta \leq \sqrt{2\log 2},\\
				\sqrt{2 \log 2}\, \beta , & \text{if } \beta > \sqrt{2\log 2}.
			\end{cases}
		\end{align}
	\end{remark}
	\begin{remark}
		The condition that $A$ has a finite left-derivative at $1$ can be removed as long as we can show the formula \eqref{eq:main} for the REM case, that is, when $A(x)=\1_{x=1}$. This holds for $\q = 0$ (see \cite{derrida1981random}). For the case when $\q \neq 0$, we refer the readers to Section~\ref{sec:rem case} for the details.
	\end{remark}
	
	\begin{remark}
		Outside of the weak correlation regime, we provide an example which shows that a naive extension of the variational formula does not recover the free energy. We refer the readers to Section~\ref{sec:outside} for the explicit construction of the example.
	\end{remark}
	
	\subsection{Previous Works} \label{sec:previous works}
	
	The limiting free energy of the CREM was previously proven in \cite{BK1,BK2} by extending the formula for the GREM (see \cite{capocaccia1987existence}). We first state a simplified version of the limiting CREM free energy as it appears in \cite[Appendix~A]{HoCREMGibbs}. 
    For $t\geq 0$, recall that for any covariance function $A$, the limiting free energy is given by 
	\begin{align}\label{eq:3FECREMFormula}
		\lim_{N\uparrow\infty} F_N^{A}(t) 
		&= \lim_{N\uparrow\infty} -\frac{1}{N}\EX{\log \kl{\textstyle\sum_{\s\in\leaves{N}} \exp\brac{\sqrt{2t}\, H_N^A (\s) - N t}}} \notag\\
		&= t \hat A(x(t)) - (1-x(t)) \log 2 - 2 \sqrt{t \log 2} \int_0^{x(t)} \sqrt{\hat A'(x)}\, \d x,
	\end{align}
	where $x(t) = \sup\kl{x\in (0,1)\colon \hat A'(x) \geq \frac{\log 2}{t}}$, $\hat {A}$ is the concave hull of $A$ and $\hat {A}'$ is its right derivative.
    Note that \eqref{eq:3FECREMFormula} is slightly different than what appears in the literature, but the formula is equivalent up to a parameterization of $t$ as in Remark~\ref{rem:reparam}.
    If we are in the weak correlation regime when $A(x) \leq x$ for all $x\in[0,1]$ (and in particular if $A$ is a convex covariance function), then $x(t) = \1_{t\geq\log 2}(t)$, which recovers \eqref{eq:main.q=0}.
	Our goal is to compute a generalization of the CREM free energy, using a new set of techniques motivated by the PDE techniques in \cite{JCBook}, which we now introduce.
	
	In \cite{Mourrat2022parisi}, Mourrat found a link between the free energy of mean field spin glasses and the solution of an infinite-dimensional Hamilton--Jacobi equation. The existence and uniqueness of the solutions to these equations were established in \cite{chenHamiltonJacobiEquations2025, issaUniquenessSemiconcaveWeak2024, issaWeakStrongUniquenessPrinciple2024}. These techniques were further developed in a series of works \cite{Mourrat2021basis,Mourrat2022parisi,mourrat2020extending} to generalize these techniques to broader models, culminating in a formula for the limiting free energy of vector spin models with possibly non-convex interactions \cite{ChenMourrat2023nonconvexVectorSpin}.  New insights from the PDE approach have also led to new variational formulae for the limiting free energies \cite{issaHOPF,JCuninvert}. A textbook introduction for these techniques is available in \cite{JCBook}. One of the motivations for this work is to study the CREM to improve our understanding of models with non-convex interactions.
	
	A key requirement in these works is a condition that the covariance function is \emph{proper} (see, for example \cite[Section~4.4]{chen_hamilton-jacobi_2023} and Lemma~3.6.5 \cite{alban_influence_2025}), which in one dimension is equivalent to the covariance function being convex. In \cite[Proposition~6.6]{Mourrat2023FreeEnergyUpperBound}, it is shown that all such covariance functions corresponding to vector spin models always satisfy this condition, which is not the case for models such as the CREM, which is well-defined even when $A$ is not convex. As such, a first step in this analysis is understanding the CREM with convex covariance functions, i.e., in the weak correlation regime. This will provide a starting point to study the more general variational formulae from which the non-convex models are derived. 
	
	We emphasize that although the CREM was originally introduced as a toy model for spin glasses, it is not an immediate consequence of the variational formulae derived for the mean-field spin glass models on Hilbert spaces \cite{chen_hamilton-jacobi_2023}. For example, Chen and Mourrat proved in  \cite[Theorem~1.1]{ChenMourrat2023nonconvexVectorSpin} that equality as in \eqref{eq:viscosQ0_2} below holds in the setting of $\R^{D\times N}$ with $D, N \in \N$, where the reference measure $P_N$ is a product measure, i.e., $P_N=P_1^{\otimes N}$, where $P_1$ is a probability measure on $\R^D$. However, one can show that the covariance structure in the CREM cannot be embedded into a class of vector spin models with a reference measure that can be written as a product measure \cite[Section~3.7]{alban_influence_2025}, so some work has to be done to establish the variational formula for the limiting free energy. 
	
	Classically, the REM and its variants were introduced as a simplified model of spin glass to study generic phenomena that one expects to see in such models. However, from the point of view of Hamilton--Jacobi techniques, these models introduced several technical difficulties due to the stronger geometric structure of the configuration spaces, which were not seen previously in inference and spin glass applications. Namely, the spins have a hierarchal structure so a simple application of the cavity method fails for such models, since our Hamiltonian depends on $\restr{\tilde\s}{i}$ which includes its ancestors on the binary tree, so we cannot use independence to easily split the Hamiltonian into its cavity fields like in the mixed $p$-spin models (see \cite[Section~1.3]{Pan-book}). We exploited convexity in a crucial way to get around the cavity computations and synchronization \cite[Chapter~6]{JCBook}.  
    
    Furthermore, the initial condition, which was a trivial consequence of independence for spin glasses, becomes much more difficult in this setting. So the computation of the initial conditional, although explicit, does not factorize nicely since it couples two types of spins ($\sigma$ and $\alpha$) with competing geometry. In Theorem~\ref{thm:InitCond}, we compute the limiting initial condition by computing the recursive expected values using truncations. The resulting initial condition is explicit, so the variational formula can be explicitly solved. This allows us to draw several concrete conclusions about the behavior of the CREM, providing independent proof of the universality of the CREM in the weak correlation regime and evidence of the fact that it is 1RSB, which was previously proven in \cite[Theorem~3.6]{BK2}.

	Surprisingly, from the point of view of Hamilton--Jacobi equations, the CREM has some rich structure that was not seen in previous models. Most interestingly, in the one-dimensional case, the covariance function of mean field spin glasses is proper and, in particular, is always convex. On the other hand, in the CREM, there exist covariance functions that violate this condition, which was previously only seen in models with two or more vector spin dimensions, such as the bipartite models. The limiting free energy for the CREM has an explicit formula, and we hope that by studying this model, we have a more tractable tool to verify the validity of \cite{JCnonconvex}. In this paper, we establish the variational formula for the fully enriched free energies in the convex case. We leave the rigorous extension to non-convex covariance functions to follow-up work. We hope that the formulae here can be used as a toy model to understand the more complex behaviors for non-convex mean field models. 
	
	\subsection{Outline of the Paper} \label{sec:HJ}
	
    In this section, we outline the main steps to prove  Theorem~\ref{thm:main} and the structure of the paper. In Section~\ref{sec:properties}, we prove several preliminary facts about the free energy that will be used throughout the rest of the paper. Section~\ref{sec:FnLipschitz} is devoted to the proof of Proposition~\ref{prop:LipschitzFn}, which shows that $F_N(t,\q_M)$ can be extended to continuous $\q \in Q_1$. 
	
	To derive Theorem~\ref{thm:main}, we want to argue that the limiting free energy is the viscosity solution to a Hamilton--Jacobi equation under the framework of \cite{chen_hamilton-jacobi_2023}. In particular, the variational formula in \eqref{eq:main} is derived from the variational form of solutions to the following infinite-dimensional Hamilton-Jacobi equation:
	\begin{align}\tag{$HJE[\q]$}\label{eq:HJX}
		\begin{cases}
			\displaystyle
			\partial_t f(t,\q) 
			- \int_0^1
			A\brac{
				\brac{\nabla_\q f(t,\q)} (u)
			} \, 
			\d u 
			= 0, 
			& \forall\, (t,\q)\in\R_{+}\times Q_2, \\
			f(0,\q) = \Psi(\q), & \q\in Q_2,
		\end{cases}
	\end{align}
where $\nabla_\q$ is the Fr\'echet derivative on $L^2([0,1] ; \R)$. A precise definitions of all the terms appearing in the above can be found in \cite{chen_hamilton-jacobi_2023}. It can be interpreted as the limit of finite-dimensional approximations of this equation, which corresponds to the cases when $\q$ is a step function. In fact, in this paper, we only consider the finite-dimensional versions of \eqref{eq:HJX} and use \cite[Theorem~4.7]{chenHamiltonJacobiEquations2025} to recover a variational formula corresponding to the unique solution of \eqref{eq:HJX}. 

The solution to equation \eqref{eq:HJX}, which is often expressed as a variational formula,  gives us the limit of the free energy. That is, the unique viscosity solution $f$ of \eqref{eq:HJX} with the initial condition $f(0,\cdot) = \Psi$ satisfies
	\begin{align}\label{eq:viscosQ0_2}
		f(t,\q) &= \lim_{N\uparrow\infty} F_N^A(t,\q),
	\end{align}
	for all $t\geq 0$ and $\q\in Q_2$. Thanks to the tractability of our model, we can further calculate the explicit formula of $\Psi$, which is usually impossible in other mean-field spin glasses. 
	
	\begin{theorem}\label{thm:InitCond}
		Let $F_N\colon \R_{\geq 0}\times Q_1 \to \R$ be as in \eqref{eq:enrichedFE}. For each $\q\in Q_1$, we have
		\begin{align}\label{eq:InitialConditionClaim2}
			\Psi(\q) \coloneqq \lim_{N\uparrow \infty} F_N(0,\q) = 
			-\log 2 + \int_0^1 \brac{\q(u) - \sfrac{\log 2}{u^2}}_+ \d u. 
		\end{align}
	\end{theorem}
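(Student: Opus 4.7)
First, by the Lipschitz estimate of Proposition~\ref{prop:LipschitzFn}, $F_N(0,\cdot)$ is $1$-Lipschitz on $Q_1$ in $\|\cdot\|_1$, and the right-hand side of \eqref{eq:InitialConditionClaim2} is likewise $1$-Lipschitz since $(\cdot)_+$ is $1$-Lipschitz. As step functions are dense in $Q_1$, it suffices to prove \eqref{eq:InitialConditionClaim2} for $\q_M = \sum_{k=0}^M q_k \1_{[\z_k, \z_{k+1})} \in \qm$.

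Next, at $t=0$ the CREM contribution vanishes and
\begin{equation*}
F_N(0, \q_M) = q_M - \frac{1}{N}\,\E\Bigl[\log \sum_{\a \in \N^M} v_\a\, Z_N(\a)\Bigr],
\qquad
Z_N(\a) := \sum_{\s \in \{-1,1\}^N} e^{\sqrt{2}\, Y_{\q_M}(\s,\a)}.
\end{equation*}
My approach is to apply a Parisi-type recursion based on the Ruelle cascade identity $\E \log \sum_\a v_\a X_\a = \tfrac{1}{\z}\log \E[X^\z]$ (valid for positive i.i.d.\ $X_\a$ with finite $\z$-th moment). At each level $k$ of the $\a$-cascade the relevant quantities are conditionally i.i.d.\ across $\a_k$, since both the sub-cascades and the level-$k$ Gaussians $\{z_{\restr{\s}{i}, \restr{\a}{k}}\}$ are independent across the choice of $\a_k$. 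After $M$ iterations, the computation reduces to evaluating nested moments of weighted branching random walk partition functions on the binary $\s$-tree.

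The core technical computation is the $N\to\infty$ asymptotic of moments of the form $\tfrac{1}{N}\log \E\bigl[\bigl(\sum_\s f_\s e^{\sqrt{2\D}\sum_i z_{\restr{\s}{i}}}\bigr)^{\z}\bigr]$ for the parameters $(\D,\z)$ arising in the recursion. These moments exhibit a three-regime behavior: an annealed regime when the effective variance $q$ per $\s$-step falls below $\log 2$, a max-dominated intermediate regime when $\log 2 \leq q \leq \log 2/\z^2$, and a fully frozen regime when $q \geq \log 2/\z^2$. Tracking which regime dominates at each cascade level and summing the resulting telescoping contributions yields $\sum_{k=0}^M \int_{\z_k}^{\z_{k+1}}(q_k - \log 2/u^2)_+\, du$, which matches the claimed integral.

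The main obstacle is making the recursion rigorous, since in the frozen regime the BRW partition function has only power-law tails and direct iteration of $\E[(\cdot)^{\z_k}]$ can produce divergent expressions. The remedy---alluded to in the introduction as ``recursive expected values using truncations''---is to truncate the BRW via a barrier on its energies (or equivalently restrict the Poisson cascade atoms above some threshold), so that all moments entering the iteration are uniformly finite in $N$. One then computes the truncated iteration level-by-level via the three-regime BRW asymptotics, sends the truncation to infinity after passing to the $N\to\infty$ limit, and verifies that the truncation error vanishes. The combinatorial bookkeeping that tracks the regime transition at each cascade level and shows the telescoping sum reproduces the integral representation \eqref{eq:InitialConditionClaim2} is the technical heart of the argument.
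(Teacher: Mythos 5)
Your overall skeleton is sound and matches the paper's: reduce to step paths by the Lipschitz estimate, then use the Ruelle cascade recursion (Lemma~\ref{lem:RecursiveAveragingFreeEnergy}) to turn $F_N(0,\q)$ into a nested conditional expectation of BRW partition functions. But the technical core is different from what the paper does, and as written it has real gaps.

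First, you miss the paper's key structural simplification. The paper does not analyze the recursion for arbitrary $\q_M\in\qm$; it first uses the density of the set $\qmt$ (step paths that touch the curve $\log 2/u^2$ exactly at some level $\zeta_{k_0}$, see \eqref{eq:QMT} and Lemma~\ref{lem:DensityQMM}). Reducing to this class is what makes the argument tractable: the lower bound on $\Psi$ is obtained by applying subadditivity of $x\mapsto x^{\zeta_j}$ for $j>k_0$ and Jensen for $j\leq k_0$, reducing everything to an \emph{annealed} computation at level $k_0$, which is exactly critical since $q_{k_0}=\log 2/\zeta_{k_0}^2$. Your proposal runs the full three-regime classification at every level, which never becomes a clean annealed calculation and instead requires the heavy ``telescoping bookkeeping'' you acknowledge but do not carry out.

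Second, your ``three-regime'' asymptotics are correct for a single-level REM fractional moment $\E[(\sum_\s e^{\beta\sqrt N X_\s})^\zeta]$, but the objects appearing in the recursion are \emph{conditional} expectations over a BRW with weights depending on all lower-level BRWs; the weights $\exp(\sqrt 2\sum_{j<k}(q_j-q_{j-1})^{1/2}z_j(\s))$ are $\FF_{k-1}$-measurable and correlated across $\s$ through the tree. Establishing the analogous asymptotic for this conditional, weighted, BRW object at each level and then tracking the regimes coherently through the recursion is a substantial additional argument, not a routine application of REM moment asymptotics. The paper avoids this entirely.

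Third, your motivation for the truncation is off. The conditional expectations $\cEX{(\cdot)^{\zeta_k}}{\FF_{k-1}}$ with $\zeta_k\in(0,1)$ are always finite --- Lyapunov's inequality, there is no divergence issue at finite $N$. The paper's ``truncation'' $T_a(N)$ (configurations whose $(k+1)$-dimensional BRW exceeds thresholds $a_0,\dots,a_k$ coordinate-wise) is deployed to obtain a sharp \emph{lower} bound on the nested expectation, i.e.\ the matching \emph{upper} bound on $\Psi$: conditioning on $|T_a(N)|\geq 1$ and using Biggins' shape theorem for the multi-dimensional BRW range to show $\P(|T_a(N)|\geq 1)\to 1$, then optimizing $\sum_j(q_j-q_{j-1})^{1/2}a_j$ over $\|a\|_2^2<2\log 2$. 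This has nothing to do with regularizing divergent moments.

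In short: the density reduction and recursion steps are right, but the central argument you propose (three-regime tracking plus telescoping) is only sketched, rests on single-level REM asymptotics that do not directly apply to the weighted conditional BRW objects in the recursion, and omits the two actual ingredients of the paper's proof --- reduction to $\qmt$ with the level-$k_0$ annealed computation, and the Biggins shape-theorem argument.
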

	The proof of Theorem~\ref{thm:InitCond} is done in Section~\ref{sec:initalCondition}. First, we show that, via an iterative formula (Proposition~\ref{prop:RecursiveCompValpha}), one can rewrite the integration over the Ruelle cascade as a recursive conditional expectation of a partition function involving $M$ copies of i.i.d. branching random walks. The second step is to compute this recursive conditional expectation by determining the typical behavior of the partition function.

	The proof of Theorem~\ref{thm:main} is done in Section~\ref{sec:finitedim} and consists of the following steps. First, we argue that the limiting free energy is a supersolution of \eqref{eq:HJX} by following Mourrat’s argument. This gives a lower bound on the free energy in terms of a variational formula. Moreover, we show that this variational formula is independent of $A$ as long as $A(x)\leq x$. For the upper bound, we apply a Gaussian comparison argument, which implies that $F_N^A\leq F_N^{\mathrm id}$. Finally, we show that the free energy $F_N^{\mathrm id}$ is upper bounded by the solution of a transport equation, and its solution matches the lower bounding variational formula.

	Moreover, Theorem~\ref{thm:main} allows us to write the limiting free energy as a variation formula which depends only on the limit of the initial condition $\Psi(\q)=\lim_{N\uparrow \infty}F_N(0,\q)$. Due to the explicit form of the initial condition in  Theorem~\ref{thm:InitCond}, we are able to solve the variational formula explicitly giving the simple one parameter variational formula in \eqref{eq:main}. At $\q = 0$, we recover the free energy formula for the CREM as expected.
	
	Lastly, in Section~\ref{sec:outlook}, we discuss a possible generalization of Theorem~\ref{thm:main} to non-convex covariance functions. In particular, we show that a naive extension of the variational formula to non-convex covariance functions gives an incorrect limit except in the high-temperature phase. 
	
	\section{Properties of the Free Energy} \label{sec:properties}
	
	In this section, we prove several properties of the free energy.
	
	\subsection{Recursive Formula} \label{sec:recursive}
	
	We begin by stating a recursive formula for $F_N$ in terms of the branching random walks. This is an essential step in computing the initial condition. Before proceeding, we define this object as follows.
	
	\begin{definition}\label{def:BRWGaussian}
		The \emph{branching random walk} (BRW) on the $N$-level binary tree with standard Gaussian increments is a centered Gaussian process $(z(\s))_{\s\in\cbrac{-1,1}^N}$ with covariances 
		\begin{align}
			\EX{z(\s)\, z(\tilde\s)} = \s\wedge\tilde\s, \qquad \forall\: \s,\tilde\s\in\leaves{N}
		\end{align}
		recalling the definition of the overlap $\s\wedge\tilde\s$ in \eqref{eq:3DefOverlap}.
	\end{definition}
    
	Note that $(z(\s))_{\s\in\leaves{N}}$ is a CREM whose covariance function $A$ is the identity function, i.e., it satisfies $A(x) = x$ for all $x\in[0,1]$. We have
	\begin{align}\label{eq:zBRWRewrite}
		(z(\s))_{\s\in\leaves{N}} 
		\overset{\d}{=} 
		\kl{\textstyle\sum_{i=1}^N z_{\restr{\s}{i}}}_{\s\in\leaves{N}},
	\end{align}
	where each $z_{\restr{\s}{i}}$ is from a family of \iid standard Gaussian random variables.

    We first recall an iterative formula that allows us to compute averages with respect to the Ruelle cascades recursively. 
    \begin{proposition}[See e.g.\ {\cite[Theorem~5.25]{JCBook}}]
    \label{prop:RecursiveCompValpha}
		Let $M\in \N$ and  let $(v_\a)_{\a\in\N^M}$ be the Ruelle cascades with parameters \begin{align}
			0 &= \zeta_0 < \zeta_1 < \zeta_2 < \dots < \zeta_{M - 1} < \zeta_M < \zeta_{M  + 1} = 1.
		\end{align}
		Independent of $(v_\a)_{\a\in\N^M}$, let 
        \begin{equation}\label{eq:unif}
        \big(\o_\g : \g\in\{\varnothing\}\cup \textstyle\bigcup_{j=1}^M \N^j
        \big)
        \quad \text{and} \quad
        (\tilde\o_j)_{j=0,\dots,M}
        \end{equation}
        be two independent families of \iid uniform random variables on $[0,1]$.
		Let $X_M\colon [0,1]^{M+1}\to\R$ be a measurable function.
		We set
		\begin{align}
			X_k 
            \coloneqq 
            X_k(\tilde\o_0,\dots,\tilde\o_k) 
			\coloneqq 
            \sfrac{1}{\z_{k+1}} 
            \log \cEX{\exp\bkl{\z_{k+1} X_{k+1}(\tilde\o_0,\dots,\tilde\o_{k+1})}}{\tilde\o_0,\dots,\tilde\o_{k}},
		\end{align}
		recursively for $k=M-1, M-2, \dots, 0$. 
		Then,
		\begin{align}\label{eq:IRuelleRecursive}
			\EX{
				\ln\kl{
					\textstyle\sum_{\a\in\N^M} v_\a
					\exp\bbkl{
						X_M(\o_{\varnothing},\o_{\restr{a}{1}}, \dots, \o_{\restr{a}{M-1}}, \o_\a)
					}
				}
			}
			=
			\EX{X_0(\tilde\o_0)}.
		\end{align}
	\end{proposition}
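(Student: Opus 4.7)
The plan is to establish, by induction on $M$, the stronger moment identity
\begin{align*}
\E\!\left[\kl{\sum_{\a\in\N^M} v_\a\,\exp X_M\bkl{\o_\varnothing,\o_{\restr{\a}{1}},\dots,\o_\a}}^{\!\l}\,\middle|\, \o_\varnothing\right]
= \exp\bkl{\l\, X_0(\o_\varnothing)}, \qquad \l\in(0,\z_1],
\end{align*}
from which \eqref{eq:IRuelleRecursive} follows by differentiating both sides at $\l=0$ (the derivatives yield $\E[\log(\cdot)\mid\o_\varnothing]$ and $X_0(\o_\varnothing)$ respectively, justified by dominated convergence) and taking expectation over $\o_\varnothing\overset{\d}{=}\tilde\o_0$. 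The two ingredients are the Poisson point process mapping theorem for $\z$-stable intensities, and the classical independence, in a $\mathrm{PD}(\z,0)$ distribution, of the normalized atoms from their total mass.

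For the base case $M=1$, the mapping theorem identifies $(u_{(i)}e^{X_1(\o_\varnothing,\o_i)})_i$ as a PPP with intensity $\z_1\,\E[e^{\z_1 X_1(\o_\varnothing,\o)}]\,y^{-1-\z_1}\d y$, so by scaling $\sum_i u_{(i)} e^{X_1(\o_i)}\overset{\d}{=}\E[e^{\z_1 X_1}]^{1/\z_1}\sum_j u'_{(j)}$ for an independent copy of the original PPP. Combining this with the factorization $\E[(\sum_i u_{(i)}e^{X_1})^\l]=\E[(\sum_j u_{(j)})^\l]\cdot\E[(\sum_i v_i e^{X_1})^\l]$ (valid for $\l\in(0,\z_1)$ by the independence of $(v_i)=(u_{(i)}/\sum_j u_{(j)})$ from the total $\sum_j u_{(j)}$) and canceling the common factor yields the base case; continuity in $\l$ extends it to $\l=\z_1$ for bounded $X_1$. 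For the inductive step, one invokes the hierarchical decomposition $v_\a=p_{\a_1}\,v^{(\a_1)}_{(\a_2,\dots,\a_M)}$ with $(p_{\a_1})\sim\mathrm{PD}(\z_1,0)$ and $(v^{(\a_1)})_{\a_1}$ independent $(M-1)$-level cascades with parameters $\z_2,\dots,\z_M$, which follows from \eqref{eq:3DefValphaWalpha} by applying the mapping theorem to $(u_{\a_1}Z^{(\a_1)})_{\a_1}$ (where $Z^{(\a_1)}$ denotes the inner normalizer). Writing $\tilde S_{\a_1}:=\sum_{(\a_2,\dots,\a_M)}v^{(\a_1)}\exp X_M(\o_\varnothing,\o_{\a_1},\dots)$, the marks $\tilde S_{\a_1}$ are iid given $\o_\varnothing$; applying the base case to the outer sum gives $\E[(\sum_{\a_1} p_{\a_1}\tilde S_{\a_1})^\l\mid\o_\varnothing]=\E[\tilde S^{\z_1}\mid\o_\varnothing]^{\l/\z_1}$, and the inductive hypothesis for the inner cascade applied with power $\z_1<\z_2$ reduces $\E[\tilde S^{\z_1}\mid\o_\varnothing,\o_{\a_1}]$ to $e^{\z_1 X_1(\o_\varnothing,\o_{\a_1})}$; averaging over $\o_{\a_1}$ uniform on $[0,1]$ yields $e^{\z_1 X_0(\o_\varnothing)}$ by the definition of $X_0$, closing the recursion.

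The main obstacle is the formulation of the inductive hypothesis: a naive induction on $\E[\log\sum_\a v_\a e^{X_M}]$ does not close, since the outer application of the one-level identity demands the $\z_1$-th moment of the inner cascade sum rather than its expected log. The moment identity above carries precisely the information needed at each level. Secondary technical points are the careful verification of the hierarchical decomposition together with its independence structure (a consequence of mapping and thinning of $\z$-stable PPPs), and the handling of integrability at the boundary $\l=\z_1$, where $\E[(\sum_j u_{(j)})^{\z_1}]=\infty$ but the normalized quantities $\sum_\a v_\a e^{X_M}$ remain bounded so the identity extends by continuity.
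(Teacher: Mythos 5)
The proof is correct and follows essentially the textbook route (the paper only cites \cite[Theorem~5.25]{JCBook} and does not give its own proof). Your key observation — that one must strengthen the inductive hypothesis from a log identity to a conditional moment identity $\E[\,(\sum_\a v_\a e^{X_M})^\l \mid \o_\varnothing\,] = e^{\l X_0(\o_\varnothing)}$ so that the one-level invariance can consume the $\z_1$-th moment of the inner sum — is exactly the right way to make the induction close. The base case via the mapping theorem and PD$(\z_1,0)$ independence, the inductive step via $v_\a = p_{\a_1} v^{(\a_1)}_{(\a_2,\dots,\a_M)}$, and the recovery of the log identity by a one-sided derivative at $\l=0$ are all sound.

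One remark on a point you classify as ``secondary'': the independence of $(\tilde S_{\a_1})_{\a_1}$ from $(p_{\a_1})_{\a_1}$ does real work in the inductive step and deserves to be promoted. When one pushes forward the marked process $(u_{\a_1}, m_{\a_1})$ under $(u,m)\mapsto(uZ(m),m)$, the resulting intensity factorizes as $\z_1 \E_\nu[Z^{\z_1}]\,(u')^{-1-\z_1}\d u' \otimes \bigl(Z(m)^{\z_1}\nu(\d m)/\E_\nu[Z^{\z_1}]\bigr)$, so the marks seen by the new PPP are the \emph{size-biased} (tilted by $Z^{\z_1}$) marks, not the original ones. Your formula $\E[(\sum_{\a_1} p_{\a_1}\tilde S_{\a_1})^\l\mid\o_\varnothing]=\E[\tilde S^{\z_1}\mid\o_\varnothing]^{\l/\z_1}$ with the \emph{untilted} law of $\tilde S$ is correct precisely because $\tilde S$ (a functional of the normalized inner cascade and the $\o$'s) is independent of the subtree total $Z^{(\a_1)}$, so tilting by $Z^{\z_1}$ does not change the law of $\tilde S$. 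This in turn requires the fact — itself proved by the same mapping argument, by induction on depth — that the normalized weights of an $(M-1)$-level cascade are independent of their unnormalized total mass. It would be worth stating this as a companion inductive claim rather than a footnote, since without it the $\z_1$-tilting is not invisible and the displayed identity would instead read $\E[S^{\z_1}\mid\o_\varnothing]^{\l/\z_1}/\E[Z^{\z_1}]^{\l/\z_1}$ with $S$ the unnormalized inner sum. Also note that the boundary case $\l=\z_1$ in the outer moment identity is never actually needed (the differentiation uses only $\l$ near $0$, and the inner cascade is evaluated at $\z_1<\z_2$, strictly interior to its valid range), so the continuity-at-the-boundary argument, while fine, can be dropped.
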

    \begin{remark}
        One can view $\{\varnothing\}\cup \textstyle\bigcup_{j=1}^M \N^j$ as the vertices of an infinite tree $\T^M$. Then, the random variables introduced \eqref{eq:unif} encode the randomness on the vertices of $\T^M$ and levels of $\T^M$, respectively.
    \end{remark}
	Note that the right-hand side of \eqref{eq:IRuelleRecursive} can be expanded to 
	\begin{align}
		\EX{X_0(\tilde\o_0)}
		=
		\E\Bigg[
		\log\Bigg(
		\E\bigg[
		\E\bigg[ 
		\dots
		%\E\Big[
		\E\big[
		\exp\kl{\z_M X_M(\tilde\o_0,\dots, \tilde\o_M)}
		\big\vert\,
		\FF_{M-1}
		\big]^{\frac{\z_{M-1}}{\z_M}}
		\dots
		\bigg\vert\,\FF_1
		\bigg]^{\frac{\z_1}{\z_2}}
		\bigg\vert\, \FF_0
		\bigg]^{\frac{1}{\z_1}}
		\Bigg)
		\Bigg],
	\end{align}
	where we write $\FF_k = \sigma(\tilde\o_0,\tilde\o_1,\dots,\tilde\o_k)$ for k = $0,\dots, M-1$.
    
	Now, we apply Proposition~\ref{prop:RecursiveCompValpha} to the enriched free energy of the CREM. 
	\begin{lemma}\label{lem:RecursiveAveragingFreeEnergy}
		Let $M,N\in \N$, $t\geq 0$ and  $\q = \sum_{k=0}^M q_k \1_{[\z_k, \z_{k+1})} \in \qm$.
		Let $F_N$ be as in \eqref{eq:enrichedFE}.
		Then,
		\begin{align}\label{eq:enrichedFERuelleAverage}
			&F_N(t,\q)
			\nonumber \\
			&= t + q_M -\frac{1}{N}\E\Bigg[
			\log\Bigg(
			\E\bigg[
			\E\bigg[ 
			\dots
			\E\Big[
			Z_{t,q}(z_0,\dots, z_M)^{\z_M}
			\Big\vert\,
			\FF_{M-1}
			\Big]^{\frac{\z_{M-1}}{\z_M}}
			\dots
			\bigg\vert\,\FF_1
			\bigg]^{\frac{\z_1}{\z_2}}
			\bigg\vert\, \FF_0
			\bigg]^{\frac{1}{\z_1}}
			\Bigg)
			\Bigg],
		\end{align}
		where 
		\begin{align}
			Z_{t,q}(z_0,\dots, z_M) 
			&\coloneqq \sum_{\s\in\leaves{N}} 
			\exp\brac{
				\sqrt{2t}\, H_N^A(\s) +  \sqrt{2}\, \textstyle\sum_{j=0}^M (q_j - q_{j-1})^{\frac{1}{2}} z_j(\sigma)
			},  \label{eq:InitCondNotation} \\
			\FF_k 
			&\coloneqq \s(z_0,\dots,z_k), \quad k=0,\dots, M-1.
			\label{eq:FF_k}
		\end{align}
		Furthermore, independent of $\kl{H_N^A(\sigma)}_{\s\in\leaves{N}}$, the processes $z_1 = (z_1(\s))_{\s\in\cbrac{-1,1}^N}, \dots, z_M = (z_M(\s))_{\s\in\cbrac{-1,1}^N}$ are i.i.d.\ copies of $z_0 \coloneqq (z_0(\s))_{\s\in\cbrac{-1,1}^N}$, a BRW defined as in Definition~\ref{def:BRWGaussian}. 
	\end{lemma}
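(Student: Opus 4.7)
The plan is to strip off the deterministic linear terms in the Hamiltonian, rewrite $Y_{\q_M}$ as a sum of independent branching random walks indexed by the nodes of $\T^M$, and then invoke Proposition~\ref{prop:RecursiveCompValpha} to convert the Ruelle-cascade average into the nested conditional expectation appearing in \eqref{eq:enrichedFERuelleAverage}. Factoring $-Nt-Nq_M$ out of the exponential in \eqref{eq:enrichedFE} produces the prefactor $t+q_M$ and leaves only the centered Gaussian contribution $\sqrt{2t}\,H_N^A(\s) + \sqrt{2}\,Y_{\q_M}(\s,\a)$ inside. Using the representation \eqref{eq:3YwrtZ} and grouping the i.i.d.\ Gaussians $z_{\restr{\s}{i},\restr{\a}{k}}$ by the path $\restr{\a}{k}$, for each fixed $k$ and each $\g\in\N^k$ the partial sum $\sum_{i=1}^N z_{\restr{\s}{i},\g}$ is, as a function of $\s$, a copy of the BRW of Definition~\ref{def:BRWGaussian} (by \eqref{eq:zBRWRewrite}), and copies indexed by distinct paths $\g$ are independent. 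Denoting these copies by $W_k^{(\g)}(\s)$ yields
\begin{align*}
Y_{\q_M}(\s,\a) \overset{\d}{=} \sum_{k=0}^M (q_k - q_{k-1})^{1/2}\, W_k^{(\restr{\a}{k})}(\s),
\end{align*}
which exhibits precisely the Ruelle-tree structure required.

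To apply Proposition~\ref{prop:RecursiveCompValpha}, I view each $W_k^{(\g)}$ as a Borel functional of the uniform $\o_\g$ attached to the vertex $\g \in \{\varnothing\}\cup\bigcup_{j=1}^M \N^j$, which is possible since any countable family of Gaussians is a measurable function of a single $U[0,1]$ variable. Conditioning on the independent process $H_N^A$ so that it may be treated as a parameter, set
\begin{align*}
X_M(\o_\varnothing,\o_{\restr{\a}{1}},\dots,\o_\a) \coloneqq \log Z_{t,q}\bkl{W_0(\o_\varnothing), W_1^{(\restr{\a}{1})}(\o_{\restr{\a}{1}}),\dots, W_M^{(\a)}(\o_\a)},
\end{align*}
with $Z_{t,q}$ as in \eqref{eq:InitCondNotation}. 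Proposition~\ref{prop:RecursiveCompValpha} then gives, conditionally on $H_N^A$, the identity $\E[\log\sum_\a v_\a \exp(X_M)\mid H_N^A] = \E[X_0(\tilde\o_0)\mid H_N^A]$. Under the identification $\tilde\o_j \leftrightarrow z_j$, the conditioning $\s(\tilde\o_0,\dots,\tilde\o_k)$ becomes $\FF_k$, and the recursion reads
\begin{align*}
X_k = \frac{1}{\z_{k+1}} \log \E\!\left[\exp(\z_{k+1} X_{k+1}) \,\middle|\, \FF_k, H_N^A\right], \qquad X_M = \log Z_{t,q}(z_0,\dots,z_M).
\end{align*}
Unrolling this $M$-fold recursion and taking the outer expectation (which also integrates over $H_N^A$) reproduces the nested expression in \eqref{eq:enrichedFERuelleAverage}; combined with the $t+q_M$ prefactor, this gives the lemma.

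The step where I would be most careful is the randomness bookkeeping around Proposition~\ref{prop:RecursiveCompValpha}: the proposition is stated for a deterministic measurable $X_M$ driven by tree-indexed uniforms, while the natural $X_M$ here additionally depends on the independent process $H_N^A$. Treating $H_N^A$ as a parameter by conditioning is the standard remedy, but it forces each conditional expectation in the nested expression to be understood as holding $H_N^A$ fixed --- in effect, $\FF_k$ must be implicitly augmented by $\s(H_N^A)$, with $H_N^A$ integrated out only by the outermost $\E$. Past this point the proof is essentially an algebraic unrolling of the recursion, using only independence of the $\o_\g$'s and measurability of $W_k$ to guarantee that the i.i.d.\ structure across distinct paths $\g \in \N^k$ survives the identification $\o_\g \leftrightarrow W_k^{(\g)}$.
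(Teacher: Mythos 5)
Your proposal is correct and takes essentially the same route as the paper: condition on $H_N^A$, peel off $-Nt-Nq_M$ to get the prefactor $t+q_M$, realize $Y_{\q_M}$ as a $\q$-weighted sum of independent BRW copies $W_k^{(\g)}$ indexed by the vertices $\g$ of $\T^M$, pass to a tree-indexed family of uniforms so Proposition~\ref{prop:RecursiveCompValpha} applies, and then re-read the nested conditional expectations over $\tilde\omega_0,\dots,\tilde\omega_M$ as nested conditional expectations over $z_0,\dots,z_M$. The only substantive difference is cosmetic: you invoke the abstract fact that a countable Gaussian family is a Borel functional of a single $U[0,1]$ variable, while the paper builds the map $G_M$ explicitly via a binary-digit splitting, coordinatewise $\Phi^{-1}$, and a Cholesky factor of the BRW covariance — both are perfectly valid realizations of the same idea. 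Your flagged concern about the $\FF_k$'s is well-taken and matches the paper's mechanism: the paper keeps the entire nested display inside a conditional expectation on $(H_N^A(\s))_\s$ (so the inner $\E[\,\cdot\mid\FF_k]$'s are de facto conditionals given $\FF_k\vee\s(H_N^A)$) and only strips this outer conditioning by a final tower-property step; read literally with $\FF_k=\s(z_0,\dots,z_k)$ the displayed identity would be off, so your reading of $\FF_k$ as implicitly augmented by $\s(H_N^A)$ is the intended one.
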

	\begin{proof}
		By the tower property,
		\begin{align}\label{eq:RecAv1}
			& F_N(t,\q) \notag\\
			&=	-\frac{1}{N}\EX{\log\kl{\textstyle\sum_{\alpha \in \N^M} v_\a \sum_{\sigma \in \{-1,1\}^N}   
					\exp\bbkl{H_N\bkl{t,\q,\s,\a}}}}\notag\\
			&=
			-\frac{1}{N}\E\Big[
				\E\Big[
					\log\kl{\textstyle\sum_{\alpha \in \N^M} v_\a \sum_{\sigma \in \{-1,1\}^N}   
						\exp\bbkl{ 
							\sqrt{2t}\, H_N^A(\sigma) - Nt  + \sqrt{2}\,Y_\q(\sigma,\alpha) - N q_M
						}
					}
                    \nonumber \\
                    &\qquad\qquad\qquad\qquad\qquad\qquad\qquad\qquad\qquad\qquad\qquad\qquad\qquad\qquad\qquad
				\Big\vert
					\kl{H_N^A(\sigma)}_{\s\in\leaves{N}}
				\Big]
			\Big]\notag\\
			&=
			t+q_M \nonumber \\
                &
			-\frac{1}{N}
			\E\Big[
				\E\Big[
					\log\kl{\textstyle\sum_{\alpha \in \N^M} v_\a \sum_{\sigma \in \{-1,1\}^N}   
						\exp\bbkl{
							\sqrt{2t}\, H_N^A(\sigma) + \sqrt{2}\,Y_\q(\sigma,\alpha) 
						}
				} 
                \Big\vert
			\kl{H_N^A(\sigma)}_{\s\in\leaves{N}}
				\Big]
			\Big].
		\end{align}
		To apply Proposition~\ref{prop:RecursiveCompValpha} to the last conditional expectation in \eqref{eq:RecAv1}, we construct a measurable function $G_M\colon [0,1]^{M+1} \to \R$ so that conditionally on $\kl{H_N^A(\sigma)}_{\s\in\leaves{N}}$,
		\begin{align}\label{eq:RecAv1.2}
			&\bbkl{
				\exp\kl{G_M(\o_{\varnothing},\o_{\restr{\a}{1}}, \dots, \o_{\restr{\a}{M-1}}, \o_\a)}
				\!}_{\a\in\N^M}
                \nonumber \\
			&
            \qquad\qquad\qquad
            \overset{\d}{=}
			\kl{ \textstyle\sum_{\sigma \in \{-1,1\}^N}   
				\exp\bbkl{
					\sqrt{2t}\, H_N^A(\sigma) + \  \sqrt{2}\,Y_\q(\sigma,\alpha) 
				}
			}_{\a\in\N^M},
		\end{align}
		where $\kl{\o_{\restr{\a}{k}}}_{\a\in\N^M, k=0,\dots,M}$ is a family of \iid uniform distributions on $[0,1]$. Throughout the rest of this proof, the notation $\overset{\d}{=}$ is understood to mean equal in distribution conditionally on $\kl{H_N^A(\sigma)}_{\s\in\leaves{N}}$.
        
        We follow the same construction as in Step~1 of the proof of Proposition~6.3 in \cite{JCBook} which requires the following steps:
		Let $\o$ be a uniformly distributed random variable on $[0,1]$.
		\begin{enumerate}
			\item  There exists a measurable bijection $\eta_N\colon [0,1] \to [0,1]^{2^N}$ so that $\eta_N(\o)$ is uniformly distributed on $[0,1]^{2^N}$. One can construct $\eta_N$, e.g., by splitting the binary representation of $x\in[0,1]$ into $2^N$ parts. 
			\item Let $\Phi$ be the cumulative distribution function of the standard Gaussian distribution. Let $\Phi_N^{-1}\colon [0,1]^{2^N} \to \R^{2^N}$ be the map which applies $\Phi^{-1}$ to each entry of an element of $[0,1]^{2^N}$. Then, $\Phi_N^{-1}\circ\eta_N(\o)$ is a $2^N$-dimensional standard Gaussian vector.
			\item \label{it:RecAvStep3}
			Let $\kl{z(\s)}_{\s\in\leaves{N}}$ be a BRW on the $N$-level binary tree with standard Gaussian increments as defined in \eqref{def:BRWGaussian}. 
			Let $\varphi\colon \{1,\ldots,2^N\} \to \leaves{N}$ be any bijection. 
			Let $\mathsf{S} = \kl{\varphi\kl{i} \wedge \varphi\kl{j}}_{i,j=1,\dots, 2^N}$ 
			be the covariance matrix of the Gaussian process $\kl{z(\varphi(i))}_{i=1,\dots, 2^N}$.
			Thus, there exists $\mathsf{R}\in\R^{2^N\times 2^N}$  so that  $\mathsf{R} \mathsf{R}^T = \mathsf{S}$. We also denote the corresponding linear map on $\R^{2^N}$ by $\mathsf{R}$. Then, $\mathsf{R}\circ\Phi_N^{-1}\circ\eta_N(\o)\overset{\d}{=} \kl{z(\varphi(i))}_{i=1,\dots, 2^N}$.
			\item We set \vspace{-10pt}
			\begin{align}
				\tilde G_M\colon [0,1]^{M+1} &\to \R^{2^N},\notag\\
				(u_0,\dots, u_M) &\mapsto \sum_{k=0}^M (q_k - q_{k-1})^{1/2} \mathsf{R}\circ\Phi_N^{-1}\circ\eta_N(u_k).
			\end{align}
			Then, for a family of \iid uniform distributions $\kl{\o_{\restr{\a}{k}}}_{\a\in\N^M, k=0,\dots,M}$  on $[0,1]$, we have 
			\begin{align}\label{eq:RecAv2}
				\kl{
					\tilde G_M(\o_{\varnothing},\o_{\restr{\a}{1}}, \dots, \o_{\restr{\a}{M-1}}, \o_\a)
				}_{\a\in\N^M}  
				&=
				\kl{
					\textstyle\sum_{k=0}^M (q_k - q_{k-1})^{1/2} \mathsf{R}\circ\Phi_N^{-1}\circ\eta_N(\o_{\restr{\a}{k}})
				}_{\a\in\N^M} \notag\\
				&\overset{\d}{=} 
				\kl{
					\textstyle\sum_{k=0}^M (q_k - q_{k-1})^{1/2} \kl{z_{\restr{\a}{k}}(\varphi(i))}_{i=1,\dots, 2^N}
				}_{\a\in\N^M}, 
			\end{align}
			where each $\bkl{z_{\restr{\a}{k}}(\s)}_{\s\in\leaves{N}}$ is from a family of \iid copies of the BRW $\kl{z(\s)}_{\s\in\leaves{N}}$ from Step~\ref{it:RecAvStep3}.
			By \eqref{eq:3YwrtZ} and \eqref{eq:zBRWRewrite}, recalling that each $z_{\restr{\s}{i},\restr{\a}{k}}$ denotes an element of a family of \iid standard Gaussian random variables,
			\begin{align}\label{eq:RecAv3}
				\kl{Y_\q(\sigma,\alpha)}_{\s\in\leaves{N}\!,\, \a\in\N^M} 
				&\overset{\d}{=} \kl{\textstyle\sum_{i = 1}^N \sum_{k = 0}^M (q_k - q_{k - 1} )^{1/2} z_{\restr{\s}{i},\restr{\a}{k}}}_{\s\in\leaves{N}\!,\, \a\in\N^M}\notag\\
				&\overset{\d}{=} \kl{\textstyle \sum_{k = 0}^M (q_k - q_{k - 1} )^{1/2} z_{\restr{\a}{k}}(\s)}_{\s\in\leaves{N}\!,\, \a\in\N^M}\notag\\
				&\overset{\d}{=} \kl{\pi_{\varphi^{-1}(\s)}\circ\tilde G_M(\o_{\varnothing},\o_{\restr{\a}{1}}, \dots, \o_{\restr{\a}{M-1}}, \o_\a)}_{\s\in\leaves{N}\!,\, \a\in\N^M}, 
			\end{align}
			where $\pi_i\colon \R^{2^N} \to \R$, $i=1,\dots,2^N$, is the projection to the $i$-th entry.
			\item We set
			\begin{align}\label{eq:RecAv3.5}
				G_M\colon [0,1]^{M+1} &\to \R,\notag\\
				u  &\mapsto \log \kl{\textstyle\sum_{i=1}^{2^N} \exp\bbkl{
						\sqrt{2t}\, H_N^A(\varphi(i)) + \sqrt{2}\,\pi_i\circ \tilde{G}_M(u)  
				} }.
			\end{align}
			By \eqref{eq:RecAv3}, $G_M$ satisfies \eqref{eq:RecAv1.2}.
		\end{enumerate}
		
		We apply Proposition~\ref{prop:RecursiveCompValpha} to the conditional expectation on the right-hand side of \eqref{eq:RecAv1} to get that
		\begin{align}\label{eq:RecAv4}
			&\cEX{
					\log\kl{\textstyle\sum_{\alpha \in \N^M} v_\a \sum_{\sigma \in \{-1,1\}^N}   
						\exp\bbkl{
							\sqrt{2t}\, H_N^A(\sigma) + \sqrt{2}\,Y_\q(\sigma,\alpha) 
						}
				}}{
					\kl{H_N^A(\sigma)}_{\s\in\leaves{N}}
				}
			\notag\\
			&=
			\cEX{
					\log\kl{\textstyle\sum_{\alpha \in \N^M} v_\a \exp\kl{G_M(\o_{\varnothing},\o_{\restr{\a}{1}}, \dots, \o_{\restr{\a}{M-1}}, \o_\a)}
				}}{
					\kl{H_N^A(\sigma)}_{\s\in\leaves{N}}
				}
			\notag\\
			&=
			\E\bigg[
			\log\bigg(
			\E\bigg[
			\E\bigg[ 
			\dots
			\E\bigg[
			\E\Big[
			\exp\kl{\z_M G_M(\tilde\o_0,\dots, \tilde\o_M)}
			\Big\vert\,
			\tilde\FF_{M-1}
			\Big]^{\frac{\z_{M-1}}{\z_M}}
			\,\bigg\vert\,
			\tilde\FF_{M-2}
			\bigg]^{\frac{\z_{M-2}}{\z_{M-1}}}
			\dots
			\notag\\
			&\qquad\qquad\qquad\qquad\qquad\dots
			\bigg\vert\,\tilde\FF_1
			\bigg]^{\frac{\z_1}{\z_2}}
			\bigg\vert\, \tilde\FF_0
			\bigg]^{\frac{1}{\z_1}}
			\bigg)
			\bigg\vert\,
			\kl{H_N^A(\sigma)}_{\s\in\leaves{N}}
			\bigg],
		\end{align}
		where $\tilde\o_0,\dots, \tilde\o_M$ are \iid uniformly distributed random variables on $[0,1]$ and we write for $k=0,\dots,M-1$, $\tilde \FF_k = \s(\tilde\o_0,\dots,\tilde\o_k)$.
		By \eqref{eq:RecAv2} and \eqref{eq:RecAv3.5}, 
		\begin{align}\label{eq:RecAv4.5}
			\exp\kl{\z_M G_M(\tilde\o_0,\dots, \tilde\o_M)} \overset{\d}{=}
			Z_{t,q}(z_0,\dots, z_M)^{\z_M},
		\end{align}
		recalling the notation in \eqref{eq:InitCondNotation}.
        
        Inserting \eqref{eq:RecAv4.5} into \eqref{eq:RecAv4} gives
		\begin{align}\label{eq:RecAv5}
			&\cEX{
					\log\kl{\textstyle\sum_{\alpha \in \N^M} v_\a \sum_{\sigma \in \{-1,1\}^N}   
						\exp\bbkl{
							\sqrt{2t}\, H_N^A(\sigma) + \sqrt{2}\,Y_\q(\sigma,\alpha) 
						}
				}}{
					\kl{H_N^A(\sigma)}_{\s\in\leaves{N}}
				}\notag\\
			&=
            \E\bigg[
			\log\Bigg(
			\E\bigg[
			\E\bigg[ 
			\dots
			\E\bigg[
			\E\Big[
			Z_{t,q}(z_0,\dots, z_M)^{\z_M}
			\Big\vert\,
			\FF_{M-1}
			\Big]^{\frac{\z_{M-1}}{\z_M}}
			\,\bigg\vert\,
			\FF_{M-2}
			\bigg]^{\frac{\z_{M-2}}{\z_{M-1}}}
            \nonumber \\
            &
            \qquad\qquad\qquad\qquad\qquad\qquad\qquad\qquad\qquad\qquad
			\dots
			\bigg\vert\,\FF_1
			\bigg]^{\frac{\z_1}{\z_2}}
			\bigg\vert\, \FF_0
			\bigg]^{\frac{1}{\z_1}}
			\Bigg)
            \bigg|
            \kl{H_N^A(\sigma)}_{\s\in\leaves{N}}
            \bigg],
		\end{align}
		recalling that $\FF_k = \s(z_0,\dots,z_k)$ for $k=0,\dots, M-1$.
		Thus, inserting \eqref{eq:RecAv5} into \eqref{eq:RecAv1} and applying the tower property completes the proof.
	\end{proof}
	
	% ----------------------------------------------------------------------------------
	\subsection{Derivatives of the Free Energy}\label{sec:derivative}
	% ---------------------------------------------------------------------------------- 
	
	Throughout this section, we fix $M,N\in\N$ and 
	$0 = \zeta_0 < \zeta_1 < \zeta_2 < \dots < \zeta_{M - 1} < \zeta_M < \zeta_{M  + 1} = 1$. We define the convex cone 
	\begin{align}\label{eq:DefConeRm}
		C_\leq^{(M)} \coloneqq \cbrac{q=(q_0,\dots,q_M)\in \R^{M+1} \colon 0 \leq q_0 \leq \dots \leq q_M}.
	\end{align}
	Its interior is
	\begin{align}\label{eq:DefConeInteriorRm}
		C_<^{(M)} \coloneqq \cbrac{q=(q_0,\dots,q_M)\in \R^{M+1} \colon 0 < q_0 < \dots < q_M}.
	\end{align}
	Then we can regard 
    \begin{align}
        F_N(t,q)=F_N(t,\q_M)
        \quad \text{where} \quad
        \q_M = \sum_{k=0}^M q_k \1_{[\z_k, \z_{k+1})}
        \label{eq:F_N(t,q)}
    \end{align}
    as a function defined on $\R_{\geq 0}\times C_\leq^{(M)}$.
	The goal of this section is to record several useful facts about the derivatives of the free energy $F_N(t,q)$.
	
	Denote by $\mathcal{S}_{N,M} =\leaves{N}\times \N^M$ the state space of our coupled spins $\s$ and $\alpha$. For $(t,q)\in\R_{\geq 0}\times C_<^{(M)}$, denote by
	\begin{align}
		\mu_{t,q}(\hat \s, \hat \a) = 
		\frac{
			v_{\hat \a} \exp\bkl{H_N(t,q,\hat\s,\hat\a)}
		}{
			\sum_{\a\in\N^M} v_\a \sum_{\s\in\leaves{N}}  \exp\bkl{H_N(t,q,\s,\a)}
		} 
	\end{align}
	the weight of the (random) Gibbs measure $\mu_{t,q}$ at $(\hat \s, \hat \a) \in \mathcal{S}_{N,M}$, where $H_N(t,q,\hat\s,\hat\a) = H_N(t,\q_M,\hat\s,\hat\a)$ with $\q_M = \sum_{k=0}^M q_k \1_{[\z_k, \z_{k+1})}$.
	Note that replacing $\kl{v_\a}_{\a\in\N^M}$ by the unnormalized weights $\kl{w_\a}_{\a\in\N^M}$ does not change \eqref{eq:GibbsAvgSigmaAlpha}. 
	For any $H_N$ and $v_\alpha$, the $1$-fold Gibbs average on the spin space $\mathcal{S}_{N,M}$ \wrt $H_N(t,q,\cdot,\cdot)$ and $\kl{v_\a}_{\a\in\N^M}$ of a bounded and measurable function $g\colon \mathcal{S}_{N,M}\to\R$ is denoted by
	\begin{align}\label{eq:GibbsAvgSigmaAlpha}
		\ga{g(\s,\a)}_{t,q} 
		\coloneqq
		\sum_{\a\in\N^M} \sum_{\s\in\leaves{N}} g(\s,\a)\mu_{t,q}(\s, \a). 
	\end{align}
	For $n\geq 2$ and $g\colon \mathcal{S}_{N,M}^n \to\R$ bounded and measurable, the $n$-fold Gibbs average 
	\begin{align}\label{eq:GibbsAvgSigmaAlphan}
		\ga{g(\s^{(1)},\a^{(1)},\dots, \s^{(n)},\a^{(n)})}_{t,q,n}
	\end{align}
	is defined as the average of~$g$ w.r.t.~$\bkl{\mu_{t,q}}^{\otimes n}$. The subscripts $t$, $q$ and $n$ will be omitted when the dependency is clear in the context.
	
	The first lemma shows that the partial derivatives of $F_N(t,q)$ with respect to the $q$'s are non-negative, and they are monotone.
	\begin{lemma}\label{lem:increasing}
		Let $N,M \geq 1$ and 
		$0 = \zeta_0 < \zeta_1 < \zeta_2 < \dots < \zeta_{M - 1} < \zeta_M < \zeta_{M  + 1} = 1$.
		For all $(t,q)\in\R_{\geq 0}\times C_<^{(M)}$ and $\ell,\ell'\in\{0,\ldots,M\}$ with $\ell \leq \ell'$, we have
		\begin{equation}\label{eq:dF>=0}
			\partial_{q_\ell} F_N(t,q) \geq 0
		\end{equation}
		and
		\begin{equation}\label{eq:dFinC}
			(\zeta_{\ell + 1} - \zeta_\ell)^{-1} \partial_{q_\ell} F_N(t,q) \leq 	(\zeta_{\ell' + 1} - \zeta_{\ell'})^{-1} \partial_{q_{\ell'}}  F_N(t,q).
		\end{equation}
		%where $F_N(t,q) = F_N(t,\q_M)$ with
		%$\q_M = \sum_{k=0}^M q_k \1_{[\z_k, \z_{k+1})}$.
	\end{lemma}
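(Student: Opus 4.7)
The plan is to first compute $\partial_{q_\ell} F_N$ explicitly via Gaussian integration by parts (IBP), from which non-negativity~\eqref{eq:dF>=0} is immediate; the monotonicity~\eqref{eq:dFinC} then requires a structural argument exploiting the nested layering of the Ruelle cascade. In the open cone $C_<^{(M)}$, the map $q\mapsto Y_{\q_M}$ is smooth (since each $q_k-q_{k-1}>0$), so differentiating under the expectation is unproblematic. The Hamiltonian $H_N(t,\q_M,\s,\a)$ depends on $q_\ell$ through the centered Gaussian field $Y_{\q_M}$ of covariance $(\s\wedge\tilde\s)q_{\a\wedge\tilde\a}$ and through the deterministic term $-Nq_M$. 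Since $\partial_{q_\ell} q_{\a\wedge\tilde\a}=\1[\a\wedge\tilde\a=\ell]$, applying the standard Gaussian IBP identity $\partial_\theta \EX{\log Z}=\tfrac12\EX{\ga{\partial_\theta C_{\mathrm{diag}}}}-\tfrac12\EX{\ga{\partial_\theta C_{12}}_2}$ and noting that the diagonal contribution $N\1[\ell=M]$ from $Y_{\q_M}$ exactly cancels $-N\1[\ell=M]$ coming from $\partial_{q_\ell}(-Nq_M)$, I arrive at
\begin{align*}
  \partial_{q_\ell} F_N(t,q)
  = \tfrac{1}{N}\, \EX{\ga{(\s^{(1)}\wedge\s^{(2)})\,\1[\a^{(1)}\wedge\a^{(2)}=\ell]}_2}.
\end{align*}
Since $\s^{(1)}\wedge\s^{(2)}\ge 0$, this proves~\eqref{eq:dF>=0} at once.

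\textbf{Monotonicity.} I would decompose the formula above as $P_\ell\cdot O_\ell$, with $P_\ell:=\EX{\ga{\1[\a^{(1)}\wedge\a^{(2)}=\ell]}_2}$ the Gibbs probability that the cascade overlap equals $\ell$, and $O_\ell$ the corresponding conditional expectation of $(\s^{(1)}\wedge\s^{(2)})/N$. Two structural facts should combine: (i) for the pure Ruelle cascade one has the classical identity $\EX{\sum_{\a\wedge\tilde\a=\ell}v_\a v_{\tilde\a}}=\z_{\ell+1}-\z_\ell$, and one hopes that $P_\ell/(\z_{\ell+1}-\z_\ell)$ remains essentially constant in $\ell$ after tilting by $H_N^A$ and $Y_{\q_M}$; (ii) $O_\ell$ is non-decreasing in $\ell$ because conditioning on $\a^{(1)}\wedge\a^{(2)}=\ell$ sets the covariance of $Y_{\q_M}(\s^{(1)},\a^{(1)})$ and $Y_{\q_M}(\s^{(2)},\a^{(2)})$ equal to $(\s^{(1)}\wedge\s^{(2)})q_\ell$, so that larger $q_\ell$ yields more positively correlated Gaussian fields and hence larger expected overlap via a Slepian/Kahane Gaussian comparison.

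\textbf{Main obstacle.} The hard point is~(i): because the tilting couples $\s$ and $\a$ non-trivially, $P_\ell$ is not literally $\z_{\ell+1}-\z_\ell$, and one must extract monotonicity of $P_\ell O_\ell/(\z_{\ell+1}-\z_\ell)$ from the combined behavior. My preferred approach is to differentiate level by level inside the recursive representation of Lemma~\ref{lem:RecursiveAveragingFreeEnergy}, so that each IBP step at level $k$ automatically produces a factor of $\z_k-\z_{k-1}$ in the right place (arising from the exponent $\z_{k-1}/\z_k$ in the recursion). A robust alternative is a direct interpolation argument: along the one-parameter family $q+\varepsilon\big((\z_{\ell+1}-\z_\ell)^{-1}e_\ell-(\z_{\ell'+1}-\z_{\ell'})^{-1}e_{\ell'}\big)$ (kept inside $C_\le^{(M)}$ for small $\varepsilon$), the derivative formula above should yield a definite sign and hence~\eqref{eq:dFinC}.
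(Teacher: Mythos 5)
The derivative formula you arrive at is correct and is exactly what the paper records in Proposition~\ref{prop:PartialDeriv}, so your proof of~\eqref{eq:dF>=0} goes through and is essentially the paper's. The problem is~\eqref{eq:dFinC}: your sketch for it has a genuine gap.

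Your decomposition $\partial_{q_\ell}F_N = P_\ell\,O_\ell$ relies on the hope in~(i) that $P_\ell/(\zeta_{\ell+1}-\zeta_\ell)$ is "essentially constant in $\ell$" after tilting. This is false in general. The identity $\E\bga{\1_{\a\wedge\tilde\a=\ell}}=\zeta_{\ell+1}-\zeta_\ell$ is precisely the statement that the overlap law of the pure (untilted) cascade is the Poisson--Dirichlet/Ruelle one; once the weights are tilted by the $\s$-dependent Gaussian factors, the Gibbs overlap distribution $P_\ell$ can be anything compatible with exchangeability, and in fact tracking how it deviates from $\zeta_{\ell+1}-\zeta_\ell$ is the entire content of the enrichment. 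Likewise, your point~(ii) would need a Slepian-type monotonicity for a \emph{conditional Gibbs average} of $\s^{(1)}\wedge\s^{(2)}$; Gaussian comparison controls $\E[\varphi]$ for monotone $\varphi$ of the field, not ratios of such expectations, and the conditioning on $\a^{(1)}\wedge\a^{(2)}=\ell$ is itself a tilted event. Neither (i) nor (ii) is justified, and the product $P_\ell O_\ell/(\zeta_{\ell+1}-\zeta_\ell)$ is what must be shown to increase in $\ell$, not the factors separately.

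What actually closes the gap (and is what the paper does) is your "preferred approach," which you name but do not carry out: differentiate inside the recursive/nested-expectation representation of Lemma~\ref{lem:RecursiveAveragingFreeEnergy}. After embedding $\sigma\mapsto\iota_N(\sigma)\in\R^{2^{N+1}}$ so that $\iota_N(\s)\cdot\iota_N(\tilde\s)=\s\wedge\tilde\s$, Gaussian integration by parts at level $1$ of the nested expectation yields a genuinely \emph{quadratic} representation
\begin{align*}
\partial_{q_\ell}F_N
=\frac{\zeta_{\ell+1}-\zeta_\ell}{N}\sum_i
\EX{\cEX{\bga{\iota_{N,i}(\sigma)}D_{\ell+1,M}}{\cF_\ell}^{\,2}\,D_{1,\ell}},
\end{align*}
where $D_{\cdot,\cdot}$ are the (non-negative) nested Radon--Nikodym factors appearing in the cascade recursion. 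The factor $\zeta_{\ell+1}-\zeta_\ell$ appears automatically from the exponents $\zeta_k/\zeta_{k+1}$ in the recursion, which is the mechanism you were guessing at. From this quadratic form, non-negativity is immediate, and the monotonicity~\eqref{eq:dFinC} follows by inserting the tower property at level $\ell'$ and applying Jensen's inequality to the conditional square $\cEX{\cdot\,D_{\ell+1,\ell'}}{\cF_\ell}^2\le\cEX{(\cdot)^2\,D_{\ell+1,\ell'}}{\cF_\ell}$. Your proposal stops short of this computation, and the alternative argument you actually sketch (the $P_\ell\cdot O_\ell$ factorization) does not work. Your "direct interpolation" alternative also does not help: it merely restates~\eqref{eq:dFinC} as a sign condition on a directional derivative, which again requires the quadratic representation to verify.
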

	
	\begin{proof}
		The proof follows the same strategy as Lemma 2.4 in \cite{JCnonconvex}. The key difference is that the enriched term has a slightly different form, which makes the integration by parts more complicated. We first define an embedding of $\sigma \in \{ -1,1 \}^N$, into a larger space which will allow us to write the enriched term in a form that resembles \cite[Lemma~2.4]{JCnonconvex}.
		
		We identify the first $2^{N+1}-2$ unit vectors $e_1,\dots,e_{2^{n+1}-2} \in \R^{2^{N+1}}$ by $(e_z)_{z\in \bigcup_{i=1}^N \leaves{i}}$ in an arbitrary but fixed order. The map
		\begin{align}
			\iota_N\colon \leaves{N}&\to \HH_N = \R^{2^{N+1}},\notag\\
			\sigma \mapsto \sum_{i=1}^N e_{\restr{\s}{i}},
		\end{align}
		defines an embedding of $\leaves{N}$ in $\R^{2^{N+1}}$ so that for $\s,\tilde\s\in\leaves{N}$,
		\begin{align}\label{eq:generalFEEmbedding}
			\iota_N(\s)\cdot\iota_N(\tilde\s) = \sum_{i,j=1}^N e_{\restr{\s}{i}} \cdot e_{\restr{\tilde\s}{j}}
			= \sum_{i,j=1}^N \1_{\restr{\s}{i} = \restr{\tilde\s}{j}}(\s,\tilde\s)
			= \sum_{i=1}^N \1_{\restr{\s}{i} = \restr{\tilde\s}{i}}(\s,\tilde\s)
			= \s\wedge \tilde\s.
		\end{align}
		Let $z_0,\ldots,z_M$ be a sequence of i.i.d. $2^{N + 1}$-dimensional standard Gaussian vectors. Then, we have that the partition function defined in \eqref{eq:InitCondNotation} can be equivalently written as 
		\begin{align}
			Z_{t,q}(z_0,\dots, z_M) 
			&= \sum_{\s\in\leaves{N}} 
			\exp\brac{
				\sqrt{2t}\, H_N^A(\s) +  \sqrt{2}\, \textstyle\sum_{j=0}^M (q_j - q_{j-1})^{\frac{1}{2}} z_j\cdot \iota_N(\sigma)}
		\end{align}
		and
		\[
		X_M = \log Z_{t,q} (z_0,\ldots,z_M).
		\]	
		For $\ell \leq m \leq M$, we write
		\[
		D_{\ell,m} := \frac{ e^{ \zeta_\ell X_\ell + \dots + \zeta_M X_m } }{ \cEX{e^{\zeta_\ell X_\ell}}{\cF_{\ell-1}} \cdots \cEX{e^{\zeta_\ell X_m}}{\cF_{m-1}} }.
		\]
		We claim that 
		\begin{align}
			\partial_{q_\ell} F_N
			=
			\frac{\zeta_{\ell+1}-\zeta_{\ell}}{N}
			\sum_{i=1}^{2^{N + 1}}
			\EX{
				\cEX{
					\bga{
						\iota_{N,i}(\sigma)
					}
					D_{\ell+1,M}
				}{\cF_\ell}^2
				D_{1,\ell}
			}
			\label{eq:quadratic}
		\end{align}
		where $ \iota_{N,i}(\sigma)$ denotes the $i$th entry of $\iota_N(\sigma) \in \R^{2^N +1}$. 
		Suppose \eqref{eq:quadratic} holds. Then we adopt the argument that appears in Step 2 of the proof of Proposition 3.6 in \cite{Mourrat2023FreeEnergyUpperBound}.
		Since $D_{1,\ell}  \geq 0$, \eqref{eq:quadratic} implies \eqref{eq:dF>=0}. To prove \eqref{eq:dFinC}, it suffices to show that for $i\in\{1,\ldots,N\}$ and $\ell \leq \ell'$, 
		\[
		\EX{
			\cEX{
				\bga{
					\iota_{N,i}(\sigma)
				}
				D_{\ell+1,M}
			}{\cF_{\ell}}^2
			D_{1,\ell}
		} 
		\leq  
		\EX{
			\cEX{
				\bga{
					\iota_{N,i}(\sigma)
				}
				D_{\ell'+1,M}
			}{\cF_{\ell'}}^2
			D_{1,\ell'}
		}.
		\]
		By the tower property and applying Jensen's inequality to $\cEX{(\,\cdot\,)D_{\ell+1,\ell'}}{\cF_\ell}$,
		\begin{align*}
			\cEX{
				\bga{
					\iota_{N,i}(\sigma)
				}
				D_{\ell+1,M}
			}{\cF_{\ell}}^2
			&=
			\cEX{
				\cEX{
					\bga{
						\iota_{N,i}(\sigma)
					}
					D_{\ell'+1,M}
				}
				{\cF_{\ell'}}
				D_{\ell+1,\ell'}
			}{\cF_{\ell}}^2
			\nonumber \\
			&\leq 
			\cEX{
				\cEX{
					\bga{
						\iota_{N,i}(\sigma)
					}
					D_{\ell'+1,M}
				}
				{\cF_{\ell'}}^2
				D_{\ell+1,\ell'}
			}{\cF_{\ell}},
		\end{align*}
		so
		\begin{align*}
			\EX{
				\cEX{
					\bga{
						\iota_{N,i}(\sigma)
					}
					D_{\ell+1,M}
				}{\cF_{\ell}}^2
				D_{1,\ell}
			} 
			&\leq 
			\EX{
				\cEX{
					\cEX{
						\bga{
							\iota_{N,i}(\sigma)
						}
						D_{\ell'+1,M}
					}
					{\cF_{\ell'}}^2
					D_{\ell+1,\ell'}
				}{\cF_{\ell}}
				D_{1,\ell}
			}
			\nonumber \\
			&=
			\EX{
				\cEX{
					\bga{
						\iota_{N,i}(\sigma)
					}
					D_{\ell'+1,M}
				}
				{\cF_{\ell'}}^2
				D_{1,\ell'}
			}
		\end{align*}
		which proves the desired inequality. 
		
		It remains to prove \eqref{eq:quadratic}. 
		The proof mostly follows the proof of Lemma 2.2 in \cite{Mourrat2022parisi}, except that the inner product between the $N$-dimensional Gaussian vector and the spin needs to be replaced due to the covariance structure of our model.
		For $\ell,m\in\{0,\ldots,M\}$, by a decreasing induction on $\ell$, we can show that
		\begin{align}
			\partial_{q_m} X_{\ell - 1} = \cEX{( \partial_{q_m} X_M ) D_{\ell, M}}{\cF_{\ell-1}}.
			\label{eq:dqX.0}
		\end{align}
		For any $m\in\{0,\ldots,M\}$, by differentiation, we can show that
		\begin{align}
			\partial_{q_m} X_{M} &= \bga{ (2 q_m - 2 q_{m-1})^{-\frac{1}{2}} z_m\cdot\iota_N(\sigma)  -  (2 q_{m+1} - 2 q_{m})^{-\frac{1}{2}} z_{m+1}\cdot\iota_N(\sigma) } \notag
			\\&= \sum_{i = 1}^{2^{N + 1}} 
			(2 q_m - 2 q_{m-1})^{-\frac{1}{2}}
			z_{m,i}
			\bga{
				\iota_{N,i}(\sigma)
			}  -  
			(2 q_{m+1} - 2 q_{m})^{-\frac{1}{2}} 
			z_{m,i}
			\bga{
				\iota_{N,i + 1}(\sigma)}.
			\label{eq:inc1}
		\end{align}
		Plugging \eqref{eq:inc1} back to \eqref{eq:dqX.0} yields
		\begin{align}
			\partial_{q_m} X_{\ell - 1}
			&= 
			\bga{ (2 q_m - 2 q_{m-1})^{-\frac{1}{2}} z_m\cdot\iota_N(\sigma)  -  (2 q_{m+1} - 2 q_{m})^{-\frac{1}{2}} z_{m+1}\cdot\iota_N(\sigma) } \notag
			\\&= \sum_{i = 1}^{2^{N + 1}} 
			(2 q_m - 2 q_{m-1})^{-\frac{1}{2}}
			\cEX{
				z_{m,i}
				\bga{
					\iota_{N,i}(\sigma)
				}
				D_{\ell,M}
			}{\cF_{\ell-1}}
			\nonumber \\
			&
			\qquad\qquad
			-
			(2 q_{m+1} - 2 q_{m})^{-\frac{1}{2}}
			\cEX{
				z_{m,i}
				\bga{
					\iota_{N,i+1}(\sigma)}
				D_{\ell,M}
			}
			{\cF_{\ell-1}}. \label{eq:dqX}
		\end{align}
		In the following, we fix $\ell=1$.
		Gaussian integration by parts yields
		\begin{align}
			&\cEX{
				z_{m,i}
				\bga{
					\iota_{N,i + 1}(\sigma)}
				D_{1,M}
			}
			{\cF_{0}}
			\nonumber \\
			&=
			\cEX{
				\partial_{z_{m,i}}
				(
				\bga{
					\iota_{N,i + 1}(\sigma)}
				D_{1,M}
				)
			}
			{\cF_{0}}
			\nonumber \\
			&=
			\cEX{
				\partial_{z_{m,i}}
				(
				\bga{
					\iota_{N,i + 1}(\sigma)}
				)
				D_{1,M}
			}
			{\cF_{0}}
			+
			\cEX{
				\bga{
					\iota_{N,i + 1}(\sigma)}
				\partial_{z_{m,i}}
				D_{1,M}
			}
			{\cF_{0}}.
			\label{eq:Leibniz}
		\end{align}
		We have
		\begin{align}
			\partial_{z_{m,i}}
			\bga{
				\iota_{N,i + 1}(\sigma)
			}
			=
			(2q_m-2q_{m-1})^{1/2}
			\left(
			\bga{
				\iota_{N,i + 1}(\sigma)
			}
			-
			\bga{
				\iota_{N,i + 1}(\sigma)
			}^2
			\right)
			\label{eq:dz1}
		\end{align}
		and 
		\begin{align}
			\partial_{z_{m,i}} D_{1,M}
			=
			\sum_{\ell=m}^M \zeta_{\ell} (\partial_{z_{m,i}} X_{\ell}) D_{1,M}
			-
			\sum_{\ell=m+1}^M \frac{
				\zeta_{\ell}
				\cEX{(\partial_{z_{m,i}}X_\ell)\exp(\zeta_\ell X_\ell)}{\cF_{\ell-1}}
			}{
				\cEX{\exp(\zeta_\ell X_\ell)}{\cF_{\ell-1}}
			} D_{1,M}.
			\label{eq:dzD}
		\end{align}
		By differentiation, we have
		\begin{align}
			\partial_{z_{m,i}} X_{\ell}
			=
			(2q_m-2q_{m-1})^{1/2}
			\cEX{
				D_{\ell+1,M}
				\bga{
					\iota_{N,i}(\sigma)
				}
			}
			{\cF_{\ell}},
			\label{eq:dzX}
		\end{align}
		and plugging \eqref{eq:dzX} back to \eqref{eq:dzD} yields
		\begin{align}
			&\partial_{z_{m,i}} D_{1,M}
			\nonumber \\
			&=
			(2q_m-2q_{m-1})^{1/2}
                \nonumber \\
                &\qquad\qquad
			\left(
			\sum_{\ell=m}^M \zeta_{\ell} 
			\cEX{
				D_{\ell+1,M}
				\bga{
					\iota_{N,i}(\sigma)
				}
			}
			{\cF_{\ell}} 
			-
			\sum_{\ell=m+1}^M 
			\zeta_\ell
			\cEX{
				D_{\ell,M}
				\bga{
					\iota_{N,i}(\sigma)
				}
			}
			{\cF_{\ell-1}}
			\right)
			D_{1,M} 
			\nonumber \\
			&=
			(2q_m-2q_{m-1})^{1/2}
			\left(
			\bga{
				\iota_{N,i}(\sigma)
			}
			-
			\sum_{\ell=m}^M 
			(\zeta_{\ell+1}-\zeta_\ell)
			\cEX{
				D_{\ell,M}
				\bga{
					\iota_{N,i}(\sigma)
				}
			}
			{\cF_{\ell-1}}
			\right)
			D_{1,M}, \label{eq:dzD.2}
		\end{align}
		where $D_{M+1,M}=1$ in the first equality above. Plugging \eqref{eq:dzD.2} and \eqref{eq:dz1} back to \eqref{eq:Leibniz}, we obtain
		\begin{align}
			&\cEX{
				z_{m,i}
				\bga{
					\iota_{N,i+1}(\sigma)
				}
				D_{1,M}
			}
			{\cF_{0}}
			\nonumber \\
			&=
			(2q_m-2q_{m-1})^{1/2}
                \nonumber \\
                &\qquad
			\cEX{
				\left(
				\bga{\iota_{N,i}(\sigma)}
				-
				\bga{
					\iota_{N,i}(\sigma)
				}
				\sum_{\ell=m}^M
				(\zeta_{\ell+1}-\zeta_\ell)
				\cEX{
					\bga{\iota_{N,i}(\sigma)}
					D_{\ell+1,M}
				}
				{\cF_{\ell-1}}
				\right)
				D_{1,M}
			}{\cF_0}.
			\label{eq:z1D}
		\end{align}
		Applying \eqref{eq:z1D} to \eqref{eq:dqX} with $\ell=1$ yields
		\begin{align*}
			\partial_{q_m} X_{-1}
			=
			\EX{\partial_{q_m} X_{0}}
			&=
			-(\zeta_{m+1}-\zeta_m) \sum_{i=1}^N
			\EX{
				\cEX{
					\bga{
						\iota_{N,i}(\sigma)
					}
					D_{\ell+1,M}
				}{\cF_{\ell}}
				\bga{
					\iota_{N,i}(\sigma)
				}
				D_{1,M}
			}
			\nonumber \\
			&=
			-(\zeta_{m+1}-\zeta_m) \sum_{i=1}^N
			\EX{
				\cEX{
					\bga{
						\iota_{N,i}(\sigma)
					}
					D_{\ell+1,M}
				}{\cF_{\ell}}^2
				D_{1,\ell}
			}
		\end{align*}
		which yields \eqref{eq:quadratic}.
	\end{proof}
	
	The next lemma gives a useful expression of the partial derivatives of $F_N(t,q)$ with respect to $t$ and the $q$'s.
	\begin{proposition}\label{prop:PartialDeriv}
		Let $M,N \in \N$ and $0 = \zeta_0 < \zeta_1 < \zeta_2 < \dots < \zeta_{M - 1} < \zeta_M < \zeta_{M  + 1} = 1$.
		For all $(t,q)\in\R_{\geq 0}\times C_<^{(M)}$ and $k = 0,\dots, M$, 
		we have
		\begin{align}\label{eq:PartialDerivFn}
			\partial_t
			F_N(t,q) &= \E\bbga{A\kl{\sfrac{\s \wedge \tilde\s}{N}}},\notag\\
			\partial_{q_k}
			F_N(t,q) &= \E\bbga{ \1_{\a\wedge \tilde\a = k}(\a,\tilde\a)\, \sfrac{\s \wedge \tilde\s}{N}}.
		\end{align}
	\end{proposition}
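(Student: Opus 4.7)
The plan is to apply direct Gaussian integration by parts (Stein's lemma) to both derivatives. In each case one first differentiates $F_N = -\tfrac{1}{N}\E\log Z_N$ under the expectation so that $\partial_t H_N$ (respectively $\partial_{q_k} H_N$) appears inside a single-replica Gibbs bracket, then uses the joint centered-Gaussian structure of $(H_N^A(\sigma))_\sigma$ or $(Y_{\q_M}(\sigma,\alpha))_{\sigma,\alpha}$ together with the covariance formulas \eqref{eq:3DefCREM}, \eqref{eq:3YCovariance} to re-express the result as a two-replica bracket. The deterministic shifts $-Nt$ and $-Nq_M$ in $H_N$ produce $1$ and $\delta_{k=M}$ respectively upon differentiation and will cancel with the diagonal contributions arising from IBP.

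For the $t$-derivative, differentiation gives $\partial_t F_N = 1 - \tfrac{1}{N\sqrt{2t}}\,\E\bga{H_N^A(\sigma)}$. Since $H_N^A$ enters the Hamiltonian with coefficient $\sqrt{2t}$, applying Stein's lemma to the Gaussian family $(H_N^A(\tilde\sigma))_{\tilde\sigma}$ — using $\partial_{H_N^A(\tilde\sigma)} H_N(t,\q_M,\sigma,\alpha) = \sqrt{2t}\,\1_{\sigma=\tilde\sigma}$ and $\E[H_N^A(\sigma)H_N^A(\tilde\sigma)] = N A((\sigma\wedge\tilde\sigma)/N)$ — yields
\[
\E\bga{H_N^A(\sigma)} = \sqrt{2t}\,N\,\bkl{1 - \E\bga{A((\sigma\wedge\tilde\sigma)/N)}},
\]
where the second term comes from the cross term in $\partial_{H_N^A(\tilde\sigma)}\mu_{t,q}$ combined with summation over the $\alpha$-index (which produces no marginal weight since $\sum_{\tilde\alpha}\mu_{t,q}(\tilde\sigma,\tilde\alpha)$ is the $\sigma$-marginal of a second replica). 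Substituting and simplifying gives the first identity.

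For the $q_k$-derivative, I use the Gaussian interpolation identity: for any centered Gaussian family $(Y(\sigma,\alpha))$ with covariance $C$ entering the Hamiltonian with coefficient $\sqrt{2}$, a Stein-type calculation shows
\[
\partial_{q_k} \E\log Z_N = \E\bga{\partial_{q_k} C(\sigma\alpha,\sigma\alpha)} - \E\bga{\partial_{q_k} C(\sigma\alpha,\tilde\sigma\tilde\alpha)}.
\]
Applied with $C(\sigma\alpha,\tilde\sigma\tilde\alpha) = (\sigma\wedge\tilde\sigma)\,q_{\alpha\wedge\tilde\alpha}$, we have $\partial_{q_k}C = (\sigma\wedge\tilde\sigma)\1_{\alpha\wedge\tilde\alpha=k}$, so the diagonal piece equals $N\,\1_{k=M}$ (since $\sigma\wedge\sigma=N$ and $\alpha\wedge\alpha=M$). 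Dividing by $-N$ and adding the $\delta_{k=M}$ contributed by the deterministic term $-Nq_M$ of $H_N$, the diagonal terms cancel and one is left with $\E\bga{\1_{\alpha\wedge\tilde\alpha=k}\,(\sigma\wedge\tilde\sigma)/N}$, as claimed.

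The only technical point is justifying both the differentiation under the expectation and the termwise IBP on the countable index $\alpha\in\N^M$. The former follows from standard Gaussian moment bounds on $H_N^A$ and $Y_{\q_M}$; the latter uses almost sure summability of the unnormalized Ruelle weights $(w_\alpha)_{\a\in\N^M}$ from \cite[Lemma 5.23]{JCBook} together with dominated convergence, exactly as in the derivations leading to \eqref{eq:quadratic} in Lemma~\ref{lem:increasing}. I do not expect any genuine obstacle here; the calculation is a cleaner counterpart of Lemma~\ref{lem:increasing}, phrased directly in terms of two-replica Gibbs averages rather than the nested conditional expectations coming from the Ruelle recursion.
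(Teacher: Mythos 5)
Your proposal is correct and uses the same core tool as the paper: differentiate under the expectation, then Gaussian integration by parts, to convert the one-replica bracket $\E\ga{\partial_\bullet H_N}$ into a two-replica bracket via the covariance. The only presentational difference is in the $q_k$-derivative: the paper differentiates the explicit representation $Y_\q = \sum_i\sum_j (q_j-q_{j-1})^{1/2} z_{\restr{\sigma}{i},\restr{\alpha}{j}}$ coordinate-wise, obtaining the two terms at levels $k$ and $k+1$ before applying IBP (equations \eqref{eq:PartialDerivComp2}--\eqref{eq:PartialDerivComp4}), whereas you invoke the abstract Gaussian-interpolation identity in terms of $\partial_{q_k}$ of the covariance $C(\sigma\alpha,\tilde\sigma\tilde\alpha)=(\sigma\wedge\tilde\sigma)\,q_{\alpha\wedge\tilde\alpha}$ directly; these are equivalent, and your version is if anything slightly cleaner. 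Your bookkeeping of the deterministic terms $-Nt$, $-Nq_M$ and of the diagonal contributions ($A(1)=1$ and $N\,\1_{k=M}$) matches the paper exactly.
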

	\begin{proof}
		We differentiate $F_N$ to get that for any $t > 0$, \begin{align}\label{eq:PartialDerivComp06} 
			\partial_t
			F_N(t,q)
			&=
			-\frac{1}{N} \partial_t \EX{
				\log\kl{\textstyle \sum_{\alpha \in \N^M} v_\a \sum_{\sigma \in \{-1,1\}^N}   
					\exp\bbkl{H_N\bkl{t,\q,\s,\a}}
			}}\notag\\
			&=
			-\frac{1}{N}  \EX{\partial_t
				\log\kl{\textstyle \sum_{\alpha \in \N^M} v_\a \sum_{\sigma \in \{-1,1\}^N}   
					\exp\bbkl{H_N\bkl{t,\q,\s,\a}}
			}}\notag\\
			&=-\frac{1}{N}  \EX{\ga{(2t)^{-1/2} H_N^A(\s) -N}_{t,q}} = 1 - \frac{1}{N}  (2t)^{-1/2}\E\ga{ H_N^A(\s)}_{t,q}.
		\end{align}
		By Gaussian integration by parts,
		\begin{align}\label{eq:PartialDerivComp07}
			(2t)^{-1/2}\E\ga{ H_N^A(\s)}_{t,q}
			&=(2t)^{-1/2}\E\bbga{\EX{H_N(t,q,\s,\a) H_N^A(\s)} - \EX{H_N(t,q,\s,\a) H_N^A(\tilde\s)}}_{t,q, 2}\notag\\
			&=
			N -N\E\bbga{A\kl{\sfrac{\s \wedge \tilde\s}{N}}}_{t,q, 2},
		\end{align}
		where we used in the last step  that $
		(2t)^{-1/2} \EX{H_N(t,q,\s,\a) H_N^A(\tilde\s)} = NA\kl{\sfrac{\s \wedge \tilde\s}{N}}
		$
		for $\s,\tilde\s\in\leaves{N}$ and $\a\in\N^M$. Inserting \eqref{eq:PartialDerivComp07} into \eqref{eq:PartialDerivComp06} gives
		\begin{align}
			\partial_t
			F_N(t,q) = \E\bbga{A\kl{\sfrac{\s \wedge \tilde\s}{N}}}_{t,q, 2}.
		\end{align}
        By continuity, we can extend this derivative to $t = 0$.
		
		To compute $\partial_{q_k} F_N(t,q)$, one justifies the exchange of derivation and summation/expectation as above for $\partial_{q_k} F_N(t,q)$. This gives
		for $k=0,\dots, M$ that\vspace{-5pt}
		\begin{align}\label{eq:PartialDerivComp2}
			\partial_{q_k}
			F_N(t,q)
			&=
			-\frac{1}{N} \partial_{q_k} \EX{\textstyle
				\log\kl{ \sum_{\alpha \in \N^M} v_\a \sum_{\sigma \in \{-1,1\}^N}   
					\exp\bbkl{H_N\bkl{t,q,\s,\a}}
			}}\notag\\
			&\quad -\frac{1}{N}  
			\E\bbga{\partial_{q_k} H_N\bkl{t,q,\s,\a}}_{t,q}\notag\\
			&=
			\1_{k=M}(k)-\frac{\sqrt{2}}{N}  
			\E\bbga{\partial_{q_k} Y_q(\s,\a)}_{t,q} 
		\end{align}
		where $Y_q(\sigma,\alpha)=Y_{\q_M}(\sigma,\alpha)$ with $\q_M = \sum_{k=0}^M q_k \1_{[\z_k, \z_{k+1})}$ and $Y_{\q_M}(\sigma,\alpha)$ was defined in \eqref{eq:3YwrtZ}.
		Fixing $k = 0,\dots, M-1$, we have
		\begin{align}\label{eq:PartialDerivComp3}
			&\E\bbga{\partial_{q_k} Y_q(\s,\a)}_{t,q}
                \nonumber \\
			&=
			\E\bbga{
				\partial_{q_k} 
				\textstyle
				\sum_{i = 1}^N \sum_{j = 0}^M (q_j - q_{j - 1} )^{1/2} z_{\restr{\s}{i},\restr{\a}{j}}
			}_{t,q}\notag\\
			&=
			\sfrac{1}{2} \E\bbga{
				(q_k-q_{k-1})^{-1/2} \textstyle \sum_{i=1}^N z_{\restr{\s}{i},\restr{\a}{k}}
			}_{t,q}
			-
			\sfrac{1}{2} \E\bbga{
				(q_{k+1}-q_{k})^{-1/2} \textstyle \sum_{i=1}^N z_{\restr{\s}{i},\restr{\a}{k+1}}
			}_{t,q}\notag\\
			&=
			-\sfrac{1}{\sqrt{2}}
			\E\bbga{
				(\s\wedge\tilde\s) 
				\bbkl{
					\1_{\a\wedge\tilde\a \geq k}(\a,\tilde\a)
					-
					\1_{\a\wedge\tilde\a \geq k+1}(\a,\tilde\a)
				}
			}_{t,q, 2}\notag\\
			&=
			-\sfrac{1}{\sqrt{2}}
			\E\bbga{
				(\s\wedge\tilde\s) 
				\1_{\a\wedge\tilde\a = k}(\a,\tilde\a)
			}_{t,q, 2},
		\end{align}
		using that 
		\begin{align}\label{eq:PartialDerivComp4}
			(q_j-q_{j-1})^{-1/2}\EX{ H_N(t,q,\s,\a) \textstyle \sum_{i=1}^N z_{\restr{\tilde\s}{i},\restr{\tilde\a}{j}}} = \sqrt{2}\,  (\s\wedge\tilde\s) \1_{\a\wedge\tilde\a \geq j}(\a,\tilde\a),
		\end{align}
		for $\s,\tilde\s\in\leaves{N}$, $\a,\tilde\a\in\N^M$ and $j=0,\dots, M$.
		In the case $k=0$, we used the convention $q_{-1}=0$.
		Inserting \eqref{eq:PartialDerivComp3} into \eqref{eq:PartialDerivComp2} gives\vspace{-5pt}
		\begin{align}
			\partial_{q_k}
			F_N(t,q)
			&=
			\E\bbga{
				\1_{\a\wedge\tilde\a = k}(\a,\tilde\a)
				\sfrac{\s\wedge\tilde\s}{N}
			}_{t,q, 2},
		\end{align}
		for $k=0,\dots,M-1$.
		Analogously, by \eqref{eq:PartialDerivComp4},
		\begin{align}
			\E\bbga{\partial_{q_M} Y_q(\s,\a)}
			&=
			\sfrac{1}{2} \E\bbga{
				(q_M-q_{M-1})^{-1/2} \textstyle \sum_{i=1}^N z_{\restr{\s}{i},\restr{\a}{M}}
			}_{t,q}\notag\\
			&=
			\sfrac{N}{\sqrt{2}}
			\kl{
				1-
				\E\bbga{
					\sfrac{\s\wedge\tilde\s}{N}
					\1_{\a\wedge\tilde\a = M}(\a,\tilde\a)
				}_{t,q, 2}
			},
		\end{align}
		so
		\begin{align}
			\partial_{q_M}
			F_N(t,q)
			&=
			\E\bbga{
				\1_{\a\wedge\tilde\a = M}(\a,\tilde\a)
				\sfrac{\s\wedge\tilde\s}{N}
			}_{t,q, 2}.\qedhere
		\end{align}
	\end{proof}
	The next proposition asserts that for each $t>0$, $F_N(t,\cdot)$ is continuously differentiable on $C_<^{(M)}$.
	\begin{proposition}\label{prop:FNC1}
		Fix $M,N \in \N$ and $0 = \zeta_0 < \zeta_1 < \zeta_2 < \dots < \zeta_{M - 1} < \zeta_M < \zeta_{M  + 1} = 1$. 
		For all $t>0$, the function $q\mapsto F_N(t,q)$ is continuously differentiable on $C_<^{(M)}$. 
	\end{proposition}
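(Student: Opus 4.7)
Since the partial derivatives $\partial_{q_k} F_N(t,q)$ on $C_<^{(M)}$ have been constructed explicitly in Proposition~\ref{prop:PartialDeriv} as
\[
\partial_{q_k} F_N(t,q) = \E\bga{\1_{\a\wedge\tilde\a = k}(\a,\tilde\a)\,\sfrac{\s\wedge\tilde\s}{N}}_{t,q,2},
\]
the task reduces to showing continuity of each of these Gibbs expectations in $q\in C_<^{(M)}$. My plan is a dominated convergence argument, nested across the infinite sum defining the Gibbs measure and the outer expectation.

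Fix any sequence $q^{(n)}\to q$ in $C_<^{(M)}$ and a compact neighborhood $K\subset C_<^{(M)}$ of $q$. Since all increments $q_j - q_{j-1}$ and $q_0$ stay uniformly positive on $K$, the coefficients $(q_j - q_{j-1})^{1/2}$ appearing in $Y_q$ are smooth in $q$ there. Hence, for almost every realization of the Gaussian fields and the Ruelle cascade weights, the Hamiltonian $H_N(t,q,\s,\a)$ is continuous in $q$ for each fixed $(\s,\a)$, and the unnormalized Gibbs weights $v_\a\exp(H_N(t,q^{(n)},\s,\a))$ converge pointwise in $(\s,\a)$ to their limits. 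To promote this to convergence of the full partition function, I would construct a $q$-independent, almost surely summable dominating function for $v_\a\sum_{\s}\exp(H_N(t,q,\s,\a))$ on $K$, using the uniform positive lower bound on the increments (which caps $(q_j-q_{j-1})^{1/2}$ and keeps $q_M$ bounded) together with Gaussian tail estimates on $Y_q$ and the almost sure finiteness of $\sum_{\a} w_\a$. Dominated convergence then yields convergence of both the numerator and denominator of $\mu_{t,q^{(n)}}(\s,\a)$, so $\mu_{t,q^{(n)}}^{\otimes 2}\to\mu_{t,q}^{\otimes 2}$ in total variation on $\mathcal{S}_{N,M}^2$ almost surely. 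Since the integrand $\1_{\a\wedge\tilde\a=k}\,\sfrac{\s\wedge\tilde\s}{N}$ takes values in $[0,1]$, the Gibbs averages converge almost surely, and a final application of bounded convergence in the outer expectation gives $\partial_{q_k} F_N(t,q^{(n)})\to\partial_{q_k} F_N(t,q)$.

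The main technical obstacle is the construction of the uniform-in-$q$ dominating function for the infinite sum over $\a\in\N^M$. The restriction to the open cone $C_<^{(M)}$ is essential here: if some increment $q_j - q_{j-1}$ were allowed to vanish, a component of $Y_q$ would degenerate and uniform control on the Gibbs weights would be lost. Inside $C_<^{(M)}$, the bounds are uniform on compact sets, and the rest of the argument reduces to routine applications of the dominated and bounded convergence theorems.
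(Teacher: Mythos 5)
Your proposal follows essentially the same strategy as the paper's proof: start from the explicit formula for $\partial_{q_k} F_N(t,q)$ in Proposition~\ref{prop:PartialDeriv}, observe that the Gibbs average is bounded in $[0,1]$, argue almost-sure continuity of $q\mapsto\ga{\1_{\a\wedge\tilde\a=k}\,\sfrac{\s\wedge\tilde\s}{N}}_{t,q,2}$, and conclude by dominated convergence in the outer expectation. The paper states the a.s.\ continuity somewhat tersely (remarking only that the underlying randomness does not depend on $p$), whereas you flesh out the intermediate step of controlling the infinite sum over $\a\in\N^M$ via a locally uniform dominating function; that extra care is a sensible expansion of the same idea rather than a different route. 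One small inaccuracy in your commentary: the reason for restricting to the open cone $C_<^{(M)}$ is not that a vanishing increment degenerates $Y_\q$ (the corresponding term simply drops out), but that the intermediate computation in Proposition~\ref{prop:PartialDeriv} involves factors $(q_j-q_{j-1})^{-1/2}$, so the derivative formula is derived on the interior before being extended to the closed cone later (Step~3 of the proof of Proposition~\ref{prop:LipschitzFn}). Likewise, a lower bound on the increments does not by itself ``cap'' $(q_j-q_{j-1})^{1/2}$; the upper bound comes from compactness of $K$. These are minor phrasing issues and do not affect the correctness of the argument.
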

	\begin{proof}
		By Proposition~\ref{prop:PartialDeriv}, $F_N(t,\cdot)$ is partially differentiable in $q$ with partial derivatives
		\begin{align}\label{eq:PartialDerivFn2}
			\partial_{q_k}
			F_N(t,q) &= \E\bbga{ \1_{\a\wedge \tilde\a = k}(\a,\tilde\a)\, \sfrac{\s \wedge \tilde\s}{N}}_{t,q,2}, \qquad k = 0,\dots, M.
		\end{align}
		It remains to show that for each $k \in \gkl{0,\dots, M}$, the partial derivatives in \eqref{eq:PartialDerivFn2} are continuous in $q$.
		For this purpose, let $q^{(n)}\in C_<^{(M)}$ for $n\in\N$ with $\lim_{n\uparrow\infty}\, \norm{\q-\q^{(n)}}_1 = 0$, where $\q = \sum_{k=0}^M q_k^{(n)} \1_{[\z_k, \z_{k+1})}$ and $\q^{(n)}=\sum_{k=0}^M q_k \1_{[\z_k, \z_{k+1})}$. 
		
		For all $t\geq 0$, all $n\in \N$  and all $k \in \gkl{0,\dots, M}$,
		$\bga{
			\1_{\a\wedge\tilde\a = k}(\a,\tilde\a)
			\sfrac{\s\wedge\tilde\s}{N}
		}_{t,q^{(n)},2}$ is bounded from above by $1$. Thus, by the dominated convergence theorem, for all $k \in \gkl{0,\dots, M}$,
		\begin{align}
			\lim_{n\uparrow\infty}
			\partial_{q_k}
			F_N(t,q^{(n)})
			&= \lim_{n\uparrow\infty} \E\bbga{
				\1_{\a\wedge\tilde\a = k}(\a,\tilde\a)
				\sfrac{\s\wedge\tilde\s}{N}
			}_{t,q^{(n)},2}\notag\\
			&= \EX{\lim_{n\uparrow\infty} \bbga{
					\1_{\a\wedge\tilde\a = k}(\a,\tilde\a)
					\sfrac{\s\wedge\tilde\s}{N}
				}_{t,q^{(n)},2}}\notag\\
			&=
			\E\bbga{
				\1_{\a\wedge\tilde\a = k}(\a,\tilde\a)
				\sfrac{\s\wedge\tilde\s}{N}
			}_{t,q,2}\notag\\
			&=
			\partial_{q_k}
			F_N(t,q).
		\end{align}
		In the second last step, we used that the map
		\begin{align}
			C_<^{(M)} &\to \R,\notag\\
			p=(p_0,\dots,p_M)&\mapsto \bbga{
				\1_{\a\wedge\tilde\a = k}(\a,\tilde\a)
				\sfrac{\s\wedge\tilde\s}{N}
			}_{t,p,2}
		\end{align}
		is continuous for almost all the randomness contained in the Gibbs average $\ga{\cdot}_{t,p,2}$. Note that by \eqref{eq:3YwrtZ}, this randomness can be described by  
        \[
        \kl{v_\a}_{\a\in\N^M}, \quad
        \kl{H_N^A(\sigma)}_{\s\in\leaves{N}}
        \quad \text{and} \quad 
        \kl{z_{\restr{\s}{i},\restr{\a}{j}}}_{\s\in\leaves{N};\,\a\in\N^M;\,i=1,\dots, N; j = 0,\dots,M},\] 
        so it does not depend on $p$.
		\end{proof}
		
		\section{
			Proof of Proposition~\ref{prop:LipschitzFn}}\label{sec:FnLipschitz}
		
		The goal of this section is to prove Proposition~\ref{prop:LipschitzFn}. Assuming \eqref{eq:FNLipschitzClaim}, the following lemma yields that $F_N(t,\cdot)$ admits a unique Lipschitz extension to~$Q_1$.
		\begin{lemma}\label{lem:DensityQM}
			Fix $r\in [1,\infty)$. For any $\q\in Q_r$, the sequence $\q_M$ defined by 
			\[
			\q_M
			\coloneqq
			\sum_{j=0}^{M}
            \q\left(\sfrac{j}{M+1}\right)
            \1_{\left[\frac{j}{M+1}, \frac{j+1}{M+1}\right)}
			\]
			converges to $\q$ in $\norm{\cdot}_r$. In particular, defining 
			\begin{align}\label{eq:Qequidist}
				\qmeq \coloneqq \gkl{\q\in Q \colon \text{ there exist } 0 \leq q_0 \leq \dots \leq q_{M} \text{ so that } \q = 		\textstyle\sum_{j=0}^{M} q_j \1_{\left[\frac{j}{M+1}, \frac{j+1}{M+1}\right)} },
			\end{align}
			this implies that both 
			$\bigcup_{M=0}^\infty\qm$ and $\bigcup_{M=0}^\infty\qmeq$ are dense in $Q_r$ \wrt $\norm{\cdot}_r$. 
		\end{lemma}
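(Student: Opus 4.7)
The plan is to prove the $L^r$ convergence $\q_M \to \q$ directly for any $\q \in Q_r$, and then read off the density claims as an immediate corollary. The argument rests on two ingredients, pointwise domination and almost-everywhere convergence, after which dominated convergence closes the proof.

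For the domination, I would use monotonicity of $\q$: if $x \in [j/(M+1),(j+1)/(M+1))$, then $\q_M(x) = \q(j/(M+1)) \leq \q(x)$. Hence $0 \leq \q_M \leq \q$ pointwise, and $|\q|^r$ is an $L^1$ dominating function by the assumption $\q \in Q_r$. For the pointwise convergence, for every $x \in [0,1)$ set $j_M(x) \coloneqq \lfloor (M+1)x \rfloor$. Then $j_M(x)/(M+1) \uparrow x$ as $M \to \infty$, so by monotonicity,
\[
\q_M(x) = \q\!\left(\tfrac{j_M(x)}{M+1}\right) \longrightarrow \q(x^-).
\]
Since $\q$ is monotone, its set of discontinuities is at most countable, and thus $\q(x^-) = \q(x)$ for Lebesgue-almost every $x \in [0,1)$. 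The dominated convergence theorem then gives $\|\q_M - \q\|_r \to 0$.

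For the density claims, since $\q$ is nonnegative and increasing, the values $\q(j/(M+1))$ for $j = 0,\ldots,M$ are nonnegative and increasing, so $\q_M \in \qmeq$ by construction. Moreover, $\qmeq \subset \qm$ by choosing the partition $\zeta_k = k/(M+1)$ for $k = 0,\ldots,M+1$ in the definition of $\qm$. Hence both $\bigcup_{M\geq 0}\qmeq$ and $\bigcup_{M \geq 0}\qm$ are dense in $Q_r$ with respect to $\|\cdot\|_r$.

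The argument is entirely routine and there is no real obstacle. The one subtlety worth flagging is that the approximation $\q_M$ evaluates $\q$ at points approaching $x$ from the \emph{left}, so right-continuity of $\q$ is not what drives the convergence; it is monotonicity that ensures $\q(x^-) = \q(x)$ off a countable null set, which is precisely what is needed to invoke dominated convergence.
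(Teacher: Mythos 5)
Your proof is correct and follows essentially the same route as the paper's: almost-everywhere pointwise convergence of $\q_M$ to $\q$ (via countability of the discontinuities of the monotone $\q$), the domination $0 \leq \q_M \leq \q \in L_r$, and dominated convergence. The only quibble is notational: $j_M(x)/(M+1)$ converges to $x$ from below but not monotonically (e.g.\ $x = 1/3$ gives the sequence $0,0,1/3,1/4,\ldots$), so the symbol $\uparrow$ is inaccurate, though this does not affect the argument since $j_M(x)/(M+1)\leq x$ and $j_M(x)/(M+1)\to x$ suffice.
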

		Indeed, for any $\q\in Q_1$, by choosing a sequence $(\q_n)\in \bigcup_{M=0}^\infty\qm$ that converges to $\q$ with respect to $\norm{\cdot}_1$, \eqref{eq:FNLipschitzClaim} and Lemma~\ref{lem:DensityQM} implies that the unique Lipschitz extension to $Q_1$ exists. 
		\begin{proof}[Proof of Lemma~\ref{lem:DensityQM}]
			%	Let $\q \in Q_1$. It suffices to show that $\q$ is Riemann-integrable. Then each step function determined by any Riemann sum of $\q$ lies in $\bigcup_{M=0}^\infty Q^{(M)}$, since $\q$ is increasing. Therefore, $\q$ can be approximated pointwise by $(\q_N)_{N\in\N} \subset \bigcup_{M=0}^\infty Q^{(M)}$ with $\q_N \leq \q$ for all $N\in\N$. Convergence of $(\q_N)_{N\in\N}$ to $\q$ in  $L_1\bkl{[0,1)}$ then follows from the dominated convergence theorem.
			%	
			The proof is standard. 
			Let $\q \in Q_r$. It suffices to show that $\q_M$ defined in the statement of the lemma converges to $\q$ in $\norm{\cdot}$ because $\q_M\in \qmeq\subseteq \bigcup_{M=0}^\infty\qmeq \subseteq \bigcup_{M=0}^\infty\qm$. First of all, $\q_M$ converges to $\q$ at every continuity point of $\q$, which is all but countably many points of $[0,1)$. Moreover, since $\q$ is increasing and non-negative, we have $\abs{\q-\q_M}= \q-\q_M\leq \q$. Therefore, by the assumption that $\q\in L_r([0,1))$, the Lebesgue dominated convergence theorem yields that $\q_M\rightarrow\q$ in $L_r$ as $M$ goes to infinity.
		\end{proof}
        \begin{remark}
            Lemma~\ref{lem:DensityQM} also holds if we replace the inequalities in \eqref{eq:Qequidist} with strict inequalities. The same continuity proof extends to this scenario. 
        \end{remark}
        
		The remainder of the section is devoted to the proof of Proposition~\ref{prop:LipschitzFn}.
		\begin{proof}[Proof of Proposition~\ref{prop:LipschitzFn}.]
			This proof has the same structure as that of \cite[Proposition~6.3]{JCBook}, the analogous result for the SK model.
			Let $M\in\N$.
			
			\textit{Step 1: Averaging over the Ruelle cascades.} For $\q = \sum_{k=0}^M q_k \1_{[\z_k, \z_{k+1})} \in \qm$, we have by Lemma~\ref{lem:RecursiveAveragingFreeEnergy} that
			\begin{align}
				&F_N(t,\q)
				\nonumber \\
				&=  q_M + t 
				-\frac{1}{N}\E\Bigg[
				\log\Bigg(
				\E\bigg[
				\E\bigg[ 
				\dots
				\E\Big[
				Z_{t,q}(z_0,\dots, z_M)^{\z_M}
				\Big\vert\,
				\FF_{M-1}
				\Big]^{\frac{\z_{M-1}}{\z_M}}
				\dots
				\bigg\vert\,\FF_1
				\bigg]^{\frac{\z_1}{\z_2}}
				\bigg\vert\, \FF_0
				\bigg]^{\frac{1}{\z_1}}
				\Bigg)
				\Bigg],\label{eq:InitCondPsi1Copy}
			\end{align}
			recalling
			\begin{align}
				Z_{t,q}(z_0,\dots, z_M) 
				&\coloneqq \sum_{\s\in\leaves{N}} 
				\exp\brac{
					\sqrt{2t}\, H_N^A(\s) +  \sqrt{2}\, \textstyle\sum_{j=0}^M (q_j - q_{j-1})^{\frac{1}{2}} z_j(\sigma)
				},\notag\\
				\FF_k 
				&\coloneqq \s(z_0,\dots,z_k), \quad k=0,\dots, M-1.
				\label{eq:ZNestedNotation}
			\end{align}
			Furthermore recall that, independent of $\kl{H_N^A(\sigma)}_{\s\in\leaves{N}}$, the processes 
            \[
            z_1 = (z_1(\s))_{\s\in\cbrac{-1,1}^N}, \dots,z_M = (z_M(\s))_{\s\in\cbrac{-1,1}^N}
            \] 
            are i.i.d.\ copies of $z_0 \coloneqq (z_0(\s))_{\s\in\cbrac{-1,1}^N}$, a BRW on the $N$-level binary tree with standard Gaussian increments.

			\textit{Step 2: Repetitions of parameters.} In \eqref{eq:enrichedFE}, we have defined $F_N$ for step functions $\q\in\qm$ that can have repetition in the $q$ parameters, but in fact, we show that one can assume that the $q$'s are distinct by taking another step function with possibly fewer jumps. This will be important to perform the derivatives.
			
			Suppose that $\hat \q = \sum_{k=0}^M \hat q_k \1_{[\z_k, \z_{k+1})}\in \qm$ has repeated parameters, i.e., there exists $k=0,\dots, M-1$ with $\hat q_k=\hat q_{k+1}$. Then, there exists $\hat\pp = \sum_{k=0}^{M'} \hat p_k \1_{[\z_k', \z_{k+1}')}\in Q^{(M')}$ where $(p_0,\ldots,p_{M'})\in C^{(M')}_<$
			such that $\hat\pp(u) = \hat\q(u)$ for all $u\in [0,1]$. 
			If we define $F_N(t,\hat \q)$ as in \eqref{eq:InitCondPsi1Copy}, replacing $\q$ by $\hat \q$, then the factor $(\hat q_{k+1}-\hat q_k)^{\frac{1}{2}}$, which appears in $Z_{t,\hat \q}(z_0,\dots, z_M)$, is zero. Thus, $z_{k+1}$ does not affect the right-hand side of \eqref{eq:InitCondPsi1Copy} and we get
			\begin{align}
				&\E\bigg[ 
				\E\bigg[ 
				\dots
				\E\Big[
				\E\Big[
				Z_{t,\hat \q}(z_0,\dots, z_M)^{\z_M}
				\Big\vert\,
				\FF_{M-1}
				\Big]^{\frac{\z_{M-1}}{\z_M}}
				\,\Big\vert\,
				\FF_{M-2}
				\Big]^{\frac{\z_{M-2}}{\z_{M-1}}}
				\dots
				\bigg\vert\,\FF_{k+1}
				\bigg]^{\frac{\z_{k+1}}{\z_{k+2}}}
				\bigg\vert\,\FF_{k}
				\bigg]^{\frac{\z_{k}}{\z_{k+1}}}\notag\\
				&=
				\E\bigg[ 
				\dots
				\E\Big[
				\E\Big[
				Z_{t,\hat \q}(z_0,\dots, z_M)^{\z_M}
				\Big\vert\,
				\FF_{M-1}
				\Big]^{\frac{\z_{M-1}}{\z_M}}
				\,\Big\vert\,
				\FF_{M-2}
				\Big]^{\frac{\z_{M-2}}{\z_{M-1}}}
				\dots
				\bigg\vert\,\FF_{k+1}
				\bigg]^{\frac{\z_{k}}{\z_{k+2}}}.
			\end{align}
			Repeating this procedure for all repetitions in the parameters of $\hat\q$ shows that $F_N(t,\hat \q)=F_N(t,\hat\pp)$ for all $t\geq 0$. 
            
			\textit{Step 3: Extending the partial derivatives.}
			Recall the definitions of $C_\leq^{(M)}$ and $C_<^{(M)}$ in \eqref{eq:DefConeRm}, \eqref{eq:DefConeInteriorRm}, respectively.
			We have computed the partial derivatives of $F_N(t,q)$ in Proposition~\ref{prop:PartialDeriv} and have shown in Proposition~\ref{prop:FNC1} that $F_N(t,\cdot)$  is continuously differentiable on $C_<^{(M)}$ for each $t> 0$. Since $C_<^{(M)}$ is convex, the partial derivatives extend to the boundary, so $F_N(t,\cdot) $ is continuously differentiable on $C_\leq^{(M)}$ for each $t\geq 0$. This is a classical result from analysis, see e.g.\ \cite{ExtendC1}. In particular, the formula \eqref{eq:PartialDerivFn} for the partial derivatives of $F_N$ in Proposition~\ref{prop:PartialDeriv} is also true for $t\geq 0$ and $q\in C_\leq^{(M)}$. 
			
			\textit{Step 4: Lipschitz continuity.}
			We now compare $F_N(t,\q)$ and $F_N(t,\tilde\q)$ for $\q,\tilde\q \in \bigcup_{M=1}^\infty\qml$ by taking $M\in\N$ and 
			\begin{align}
				0 &= \zeta_0 < \zeta_1 < \zeta_2 < \dots < \zeta_{M - 1} < \zeta_M < \zeta_{M  + 1} = 1,\notag\\
				0&=q_{-1} \leq q_0 \leq \dots \leq q_M < \infty, \notag\\
				0&=\tilde q_{-1} \leq \tilde q_0 \leq \dots \leq \tilde q_M < \infty,
			\end{align}
			so that
			\begin{align}
				\q = \sum_{k=0}^M q_k \1_{[\z_k, \z_{k+1})} \qquad \text{and} \qquad \tilde\q = \sum_{k=0}^M \tilde q_k \1_{[\z_k, \z_{k+1})}.
			\end{align}
			For $\lambda \in [0,1]$, we define
			\begin{align}
				\q^{(\lambda)} &\coloneqq \lambda \q + (1-\lambda) \tilde\q = \sum_{k=0}^M (\lambda q_k + (1-\lambda) \tilde q_k) \1_{[\z_k, \z_{k+1})}.
			\end{align}
			Note that $\q^{(\lambda)}\in\qml$.
			By the fundamental theorem of calculus,
			\begin{align}\label{eq:Lipsch0}
				\abs{F_N(t,\q) - F_N(t,\tilde\q)}
				=
				\abs{\int_0^1 \partial_\lambda F_N \bkl{t,\q^{(\tilde\lambda)}} \,\d\tilde\lambda\,}
				\leq \sup_{\tilde\lambda \in[0,1]} \abs{\partial_\lambda F_N\bkl{t,\q^{(\tilde\lambda)}}}.
			\end{align}
			For each $\tilde\lambda \in [0,1]$, the chain rule implies that 
			\begin{align}\label{eq:Lipsch1}
				\partial_\lambda F_N\kl{t,\q^{(\tilde\lambda)}}
				= \sum_{k=0}^M(q_k - \tilde q_k) \partial_{q_k} F_N \kl{t,q^{(\lambda)}},
			\end{align}
			where 
			\begin{align}
				q^{(\tilde\lambda)} &\coloneqq \bbrac{\tilde\lambda q_0 + (1-\tilde\lambda) \tilde q_0, \dots, \tilde\lambda q_M + (1-\tilde\lambda) \tilde q_M}.
			\end{align}
			By \eqref{eq:PartialDerivFn} and an application of \cite[Equation~(2.81)]{Pan-book} to compute the averages of $\1_{\a\wedge\tilde\a=k}\kl{\a ,\tilde\a}$,
			\begin{align}\label{eq:Lipsch2}
				\partial_{q_k} F_N (t,q^{(\tilde\lambda)}) = \E\bbga{\1_{\a\wedge\tilde\a=k}\kl{\a ,\tilde\a} \sfrac{\s\wedge\tilde\s}{N}}_{t,q^{(\tilde\lambda)},2} \leq \E\bbga{\1_{\a\wedge\tilde\a=k}\kl{\a ,\tilde\a}}_{t,q^{(\tilde\lambda)},2} = \zeta_{k+1} - \zeta_k,
			\end{align}
			where $\ga{\cdot}_{t,q^{(\lambda)},2}$ is defined in \eqref{eq:GibbsAvgSigmaAlphan} with $n=2$.
			Inserting \eqref{eq:Lipsch1} and \eqref{eq:Lipsch2} into \eqref{eq:Lipsch0} gives
			\begin{align}\label{eq:Lipsch3}
				\abs{F_N(t,\q) - F_N(t,\tilde\q)} \leq \sum_{k=0}^M \abs{\zeta_{k+1} - \zeta_k} \abs{q_k - \tilde q_k} = \norm{\q-\tilde\q}_1.
			\end{align}
			Since by Lemma~\ref{lem:DensityQM}, $\bigcup_{M=1}^\infty\qm$ is dense in $Q_1$, \eqref{eq:Lipsch3} implies the existence of a unique extension of $F_N$ to $\R_{\geq 0} \times Q_1$. This extension satisfies \eqref{eq:Lipsch3} on its domain.
		\end{proof}
	
	\section{Proof of Theorem~\ref{thm:InitCond}}\label{sec:initalCondition}
	Proposition~\ref{prop:LipschitzFn} asserts that $F_N$ can be defined on $\R_{\geq 0}\times Q_1$.
	The central result of this section is Theorem~\ref{thm:InitCond} which states that for $\q\in Q_1$, 
	\begin{align}\label{eq:InitialConditionClaim}
		\Psi(\q) \coloneqq \lim_{N\uparrow \infty} F_N(0,\q) = 
		-\log 2 + \int_0^1 \brac{\q(u) - \sfrac{\log 2}{u^2}}_+ \d u. 
	\end{align}
	We use \eqref{eq:InitialConditionClaim} to prove an explicit formula for $\Psi^*$ of $\Psi$ in Proposition~\ref{prop:HJCremPsiCOnvexDual}.
	Moreover, \eqref{eq:InitialConditionClaim} directly implies the following corollary regarding the regularity of $\Psi$.
	\begin{corollary}\label{cor:convexLipschitz}
		$\Psi$ is convex and Lipschitz continuous on $Q_1$ with Lipschitz constant $1$.
	\end{corollary}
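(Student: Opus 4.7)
The plan is that both assertions follow immediately from the explicit formula \eqref{eq:InitialConditionClaim}, so the work is bookkeeping rather than insight. First, I would verify that $\Psi$ is well-defined on $Q_1$, which is immediate from the pointwise bound $0\leq \brac{\q(u)-\sfrac{\log 2}{u^2}}_+ \leq \q(u)$ together with $\q\in L^1([0,1))$.

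For the Lipschitz estimate, the key observation is that $x\mapsto (x)_+$ is $1$-Lipschitz on $\R$. Applying this pointwise under the integral, for any $\q,\tilde\q\in Q_1$,
\begin{align*}
|\Psi(\q)-\Psi(\tilde\q)|
&\leq \int_0^1 \left|\brac{\q(u)-\sfrac{\log 2}{u^2}}_+ - \brac{\tilde\q(u)-\sfrac{\log 2}{u^2}}_+\right|\d u \\
&\leq \int_0^1 |\q(u)-\tilde\q(u)|\,\d u = \|\q-\tilde\q\|_1.
\end{align*}
As an alternative derivation, one could pass to the pointwise limit in the Lipschitz bound \eqref{eq:FNLipschitzClaim} of Proposition~\ref{prop:LipschitzFn}: since each $F_N(0,\cdot)$ is $1$-Lipschitz on $Q_1$ and $\Psi$ is defined as the pointwise limit of $F_N(0,\cdot)$ via Theorem~\ref{thm:InitCond}, the Lipschitz constant is preserved in the limit.

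For convexity, I would fix $u\in(0,1)$ and note that the evaluation map $\q\mapsto \q(u)-\sfrac{\log 2}{u^2}$ is affine in $\q$. Precomposing the convex function $x\mapsto (x)_+$ with this affine map yields a convex function of $\q$. Since integration against a non-negative measure preserves convexity, $\q\mapsto \int_0^1 \brac{\q(u)-\sfrac{\log 2}{u^2}}_+ \d u$ is convex on $Q_1$, and adding the constant $-\log 2$ preserves this. Hence $\Psi$ is convex.

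There is no real obstacle in this corollary: the content lies entirely in the explicit formula \eqref{eq:InitialConditionClaim} provided by Theorem~\ref{thm:InitCond}, and once that is in hand both assertions reduce to elementary properties of the positive-part function and of integration.
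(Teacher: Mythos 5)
Your proof is correct, and your Lipschitz argument is cleaner than the paper's. The paper identifies the crossing points $\zeta = \inf\{s: \q(s) > \log 2/s^2\}$ and $\tilde\zeta = \inf\{s: \tilde\q(s) > \log 2/s^2\}$, orders them WLOG, splits the difference $\Psi(\q) - \Psi(\tilde\q)$ into an integral over $[\zeta,\tilde\zeta)$ (where one positive part vanishes) plus one over $[\tilde\zeta,1)$ (where both are active), and then bounds each piece by the $L^1$ distance; this exploits the monotonicity of $\q$, which forces the region $\{\q(u) > \log 2/u^2\}$ to be an interval. Your approach skips all of this: the pointwise bound $|(a)_+ - (b)_+| \leq |a-b|$ applied under the integral sign gives the estimate in one line, without any use of the monotone structure of $\q$ or of the particular kernel $\log 2/u^2$. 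For convexity both you and the paper use the same observation (convexity of $x\mapsto (x)_+$ precomposed with an affine map, then integrated). Your alternative derivation of the Lipschitz bound — passing to the limit in Proposition~\ref{prop:LipschitzFn} — is also valid and is arguably the more structural route, though it has the minor drawback of not being self-contained once the explicit formula for $\Psi$ is already in hand.
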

	\begin{proof}
		We start by proving Lipschitz continuity. Let $\q,\tilde\q\in Q_1$ and assume w.l.o.g.\ that
		\begin{align}
			\zeta \coloneqq \inf \cbrac{s\in(0,1)\colon \q(s) > \sfrac{\log 2}{u^2}} \leq \inf\cbrac{s\in(0,1)\colon \tilde\q(s) > \sfrac{\log 2}{u^2}} \eqqcolon \tilde\zeta.
		\end{align}
		Then, since $\tilde\q(u) < \sfrac{\log 2}{u^2}$ for $u \in [0,\tilde\zeta)$,
		\begin{align}
			\abs{\Psi(\q) -\Psi(\tilde\q)}
			= \abs{
				\int_\zeta^{\tilde\zeta} \q(u) - \sfrac{\log 2}{u^2}\, \d u 
				+ 
				\int_{\tilde\zeta}^1 \q(u) - \tilde\q(u)\, \d u
			}
			\leq 
			\int_{\zeta}^1 \abs{\q(u) -\tilde\q(u)}\, \d u \leq \norm{\q-\tilde\q}_1.
		\end{align}
		To prove convexity, note that for any $\lambda \in [0,1]$, we have 
		$\lambda \q + (1-\lambda)\tilde\q \in Q_1$. Since $s\mapsto (s)_+$ is convex,
		\begin{align}
			\Psi(\lambda \q + (1-\lambda)\tilde\q) 
			&\leq - \log 2 + \int_0^1 \lambda \brac{\q(u) - \sfrac{\log 2}{u^2}}_+ + (1-\lambda) \brac{\tilde\q(u) - \sfrac{\log 2}{u^2}}_+ \d u\notag\\
			&= \lambda \Psi(\q) + (1-\lambda) \Psi (\tilde\q).\qedhere
		\end{align}
	\end{proof}
	For $M \in \N$, we define 
	\begin{align}
		\qmt \coloneqq \Big\{\q \in \qm \colon &\q = \textstyle\sum_{j=0}^M q_j \1_{[\zeta_j, \zeta_{j+1})} 
		\text{ 
			with } 0 \leq q_0 \leq \dots\leq q_M < \infty \notag\\
		&\text{and there exists } k_0 \in \cbrac{1,\dots,M} \text{ with } q_{k_0} = \sfrac{\log 2}{\zeta_{k_0}^2} \Big\}.
        \label{eq:QMT}
	\end{align}
	\begin{lemma}\label{lem:DensityQMM}
		For each $M\in\N$, $\tilde{Q}^{(M+1)}$ is dense in $\qm$ \wrt $\norm{\cdot}_1$.
	\end{lemma}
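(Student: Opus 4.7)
The plan is to prove, for any $\q = \sum_{k=0}^{M} q_k \1_{[\zeta_k,\zeta_{k+1})} \in \qm$ and any $\varepsilon>0$, the existence of a $\tilde\q \in \tilde{Q}^{(M+1)}$ with $\|\q-\tilde\q\|_1 < \varepsilon$ by inserting one additional breakpoint into the partition underlying $\q$. Setting $v_k \coloneqq \log 2/\zeta_k^2$, I compare $\q$ to the continuous decreasing curve $u\mapsto \log 2/u^2$ via the right-continuous, non-decreasing function $g(u) \coloneqq \q(u) - \log 2/u^2$ on $(0,1)$, which satisfies $g(0^+)=-\infty$.

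In the first case $q_M \le \log 2$, we have $g<0$ on $(0,1)$, and I insert a new breakpoint at $\zeta^\star \coloneqq 1-\delta$ with value $q^\star \coloneqq \log 2/(1-\delta)^2$ on $[\zeta^\star,1)$. Monotonicity holds because $q_M \le \log 2 < q^\star$, the matching condition $q^\star = \log 2/(\zeta^\star)^2$ is built in, and the $L^1$ cost is $(q^\star-q_M)\delta$, which can be made $<\varepsilon$ by choosing $\delta$ small.

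In the complementary case $q_M > \log 2$, I define $u^\star \coloneqq \inf\{u\in(0,1) : g(u)\ge 0\}$, so that $g(u^\star)\ge 0$ by right-continuity. If $u^\star$ lies in the interior of a piece $(\zeta_{k^\star},\zeta_{k^\star+1})$, continuity of $g$ within that piece forces $g(u^\star)=0$, hence $q_{k^\star}=\log 2/(u^\star)^2$; inserting $\zeta^\star \coloneqq u^\star$ with the same value $q_{k^\star}$ leaves $\q$ pointwise unchanged and delivers an element of $\tilde{Q}^{(M+1)}$ with zero $L^1$ error. If instead $u^\star = \zeta_k$ for some $k \in \{1,\dots,M\}$, then $q_{k-1} < v_k \le q_k$. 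When $q_k = v_k$, any dummy breakpoint inserted in $(0,\zeta_1)$ with value $q_0$ preserves $\q$ and satisfies the matching condition at the renumbered index corresponding to $\zeta_k$. When $q_k > v_k$ strictly, I choose $q^\star \in (v_k, \min(v_{k-1}, q_k))$ close to $v_k$ and set $\zeta^\star \coloneqq \sqrt{\log 2/q^\star} \in (\zeta_{k-1}, \zeta_k)$; monotonicity $q_{k-1} < q^\star \le q_k$ is preserved by this choice, and the $L^1$ cost $(q^\star-q_{k-1})(\zeta_k-\zeta^\star)$ can be made arbitrarily small as $q^\star \downarrow v_k$ (so $\zeta^\star \uparrow \zeta_k$).

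The main obstacle is the somewhat finicky case analysis needed to simultaneously verify (i) monotonicity of the values in the refined partition, (ii) that the inserted breakpoint lies strictly between two existing ones, and (iii) the matching identity $q_{k_0} = \log 2/\zeta_{k_0}^2$ at some $k_0 \in \{1,\dots,M+1\}$ in the refined partition. Once these constraints are organized as above, each $L^1$ estimate reduces to bounding the product of a vanishingly small length by a bounded height, and the approximation goes through.
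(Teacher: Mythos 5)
Your proof is correct and takes essentially the same approach as the paper: insert one extra breakpoint into the partition of $\q$ at or near the crossing point with the curve $u\mapsto\log 2/u^2$, with the matching identity $q = \log 2/\zeta^2$ built into the inserted node, and then bound the $L^1$ error as a vanishing interval length times a bounded height. Your case analysis is in fact slightly tighter than the paper's---you explicitly cover $q_M = \log 2$ and the boundary-equality case $q_k = \log 2/\zeta_k^2$, both of which fall through the strict inequalities in the paper's three cases---and in the jump case you insert the new breakpoint just to the left at $\zeta^\star = \sqrt{\log 2/q^\star}$ rather than just to the right at $\zeta_{k_0+1}+\varepsilon$ as the paper does, but both variants yield the same vanishing cost.
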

	\begin{proof}
		Let $\q\in Q^{(M)}$ and $\varepsilon>0$ be sufficiently small. Without loss of generality, we can assume that $\q$ has distinct $q$'s.
		We distinguish the following three cases, see Figure~\ref{fig:q1} for an illustration:
		\begin{figure}
			\centering
			\includegraphics[width=0.98\linewidth]{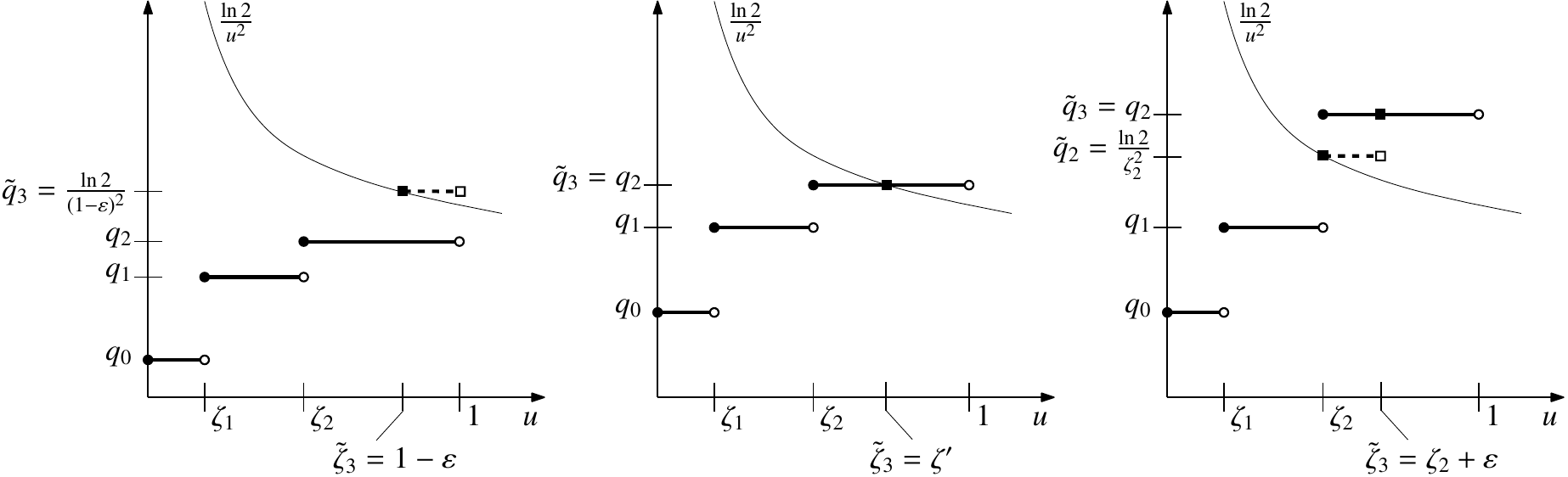}
			\caption{Examples of Case~\ref{it:enhancedq1} to Case~\ref{it:enhancedq3} for $M=2$ from left to right. Newly added path segments of $\tilde \q$ are dashed with boxes at their boundary. Shared path segments of $\q$ and $\tilde \q$ are thick with circles at their boundaries.}
			\label{fig:q1}
		\end{figure}
		\begin{enumerate}
			\item \label{it:enhancedq1} $q_M < \log 2$. In this case, we define $\tilde\q = \sum_{j=0}^{M+1} \tilde{q}_j \1_{[\tilde{\zeta}_j,\tilde{\zeta}_{j+1})}\in \tilde{Q}^{(M+1)}$, where
			\begin{align}
				\tilde{q}_j 
				=
				\begin{cases}
					q_j, & j=0,\ldots, M, \\
					\sfrac{\log 2}{(1-\varepsilon)^2}, & j=M+1,
				\end{cases}
			\end{align}
			and 
			\vspace{-5pt}
			\begin{align}
				\tilde{\zeta}_j
				=
				\begin{cases}
					\zeta_j, & j=0,\ldots,M, \\
					1-\varepsilon, & j=M+1, \\
					1, & j=M+2.
				\end{cases}
			\end{align}
			Then, we have that $\norm{\tilde{\q}-\q}_1\leq \kl{\frac{\log 2}{(1-\varepsilon)^2} - q_M}\varepsilon$.
			\item There exists $k_0\in \llbracket 0,M-1\rrbracket$ such that $\q(\zeta_{k_0})<\frac{\log 2}{\zeta_{k_0}^2}$ and $\q(\zeta_{k_0+1}) >\frac{\log 2}{\zeta_{k_0+1}^2}$, and there exists $\hat\zeta\in [\zeta_{k_0},\zeta_{k_0+1})$ such that $q_{k_0} = \frac{\log 2}{{\hat\zeta}^2}$. 
			Then  
			we define $\tilde\q = \sum_{j=0}^{M+1} \tilde{q}_j \1_{[\tilde{\zeta}_j,\tilde{\zeta}_{j+1})}\in \tilde{Q}^{(M+1)}$, where
			\begin{align}
				\tilde{q}_j 
				=
				\begin{cases}
					q_j, & j=0,\ldots,k_0, \\
					\sfrac{\log 2}{\tilde\zeta_{k_0+1}^2}, & j=k_0+1, \\
					q_{j-1}, & j= k_0+2,\ldots,M, 
				\end{cases}
			\end{align}
			and \vspace{-5pt}
			\begin{align}
				\tilde{\zeta}_j
				=
				\begin{cases}
					\zeta_j, & j= 0,\ldots,k, \\
					\hat\zeta, & j=k_0+1, \\
					\zeta_{j-1}, & j= k_0+2,\ldots,M.
				\end{cases}
			\end{align}
			Then, we have that $\norm{\tilde{\q}-\q}_1=0$.
			\item \label{it:enhancedq3} There exists $k_0\in \llbracket 0,M-1\rrbracket$ such that $\q(\zeta_{k_0})<\frac{\log 2}{\zeta_{k_0}^2}$ and $\q(\zeta_{k_0+1}) >\frac{\log 2}{\zeta_{k_0+1}^2}$, and $q_{k_0} < \frac{\log 2}{{\zeta}^2}$ for all $\zeta\in [\zeta_{k_0},\zeta_{k_0+1})$. 
			Then 
			we define $\tilde\q = \sum_{j=0}^{M+1} \tilde{q}_j \1_{[\tilde{\zeta}_j,\tilde{\zeta}_{j+1})}\in \tilde{Q}^{(M+1)}$, where
			\begin{align}
				\tilde{q}_j 
				=
				\begin{cases}
					q_j, & j=0,\ldots,k_0, \\
					\sfrac{\log 2}{\tilde\zeta_{k_0+1}^2}, & j=k_0+1, \\
					q_{j-1}, & j= k_0+2,\ldots,M, 
				\end{cases}
			\end{align}
			and \vspace{-5pt}
			\begin{align}
				\tilde{\zeta}_j
				=
				\begin{cases}
					\zeta_j, & j= 0,\ldots,k_0+1, \\
					\zeta_{k_0+1}+\varepsilon, & j=k_0+2, \\
					\zeta_{j-1}, & j= k_0+3,\ldots,M.
				\end{cases}
			\end{align}
			Then, we have that $\norm{\tilde{\q}-\q}_1\leq (q_{k_0+1} - q_{k_0})\varepsilon$.
		\end{enumerate}
		Thus, we have shown that $\bigcup_{M=0}^\infty \tilde{Q}^{(M+1)}$ is dense in $\bigcup_{M=0}^\infty Q^{(M)}$. 
	\end{proof}

	Now we prove the central result of this section.
	\begin{proof}[Proof of Theorem~\ref{thm:InitCond}]
		Let $M\in\N$. By Lemma~\ref{lem:DensityQM} and Lemma~\ref{lem:DensityQMM}, $\bigcup_{M=0}^\infty \qmt$ is dense in $Q_1$ \wrt $\norm{\cdot}_1$. This and the Lipschitz continuity of the right-hand side of \eqref{eq:InitialConditionClaim} imply that it suffices to prove \eqref{eq:InitialConditionClaim} for $q\in \qmt$. 
        Fix $\q \in \qmt$. By the definition of $\qmt$ in \eqref{eq:QMT}, there exists $k_0 \in \cbrac{1,\dots,M}$ such that $\q_{k_0} = \sfrac{\log 2}{\zeta_{k_0}^2}$. In this setting, since $\q$ is increasing,
		\begin{align}
			\int_0^1 \brac{\q(u) - \sfrac{\log 2}{u^2}}_+ \d u
			= \int_{\zeta_{k_0}}^1 \q(u) - \sfrac{\log 2}{u^2}\, \d u
			= \log 2 - \sfrac{\log 2}{\zeta_{k_0}} + \int_{\zeta_{k_0}}^1 \q(u)\, \d u.
		\end{align}
		Thus, it remains to prove that 
		\begin{align}
			\Psi(\q) = - \sfrac{\log 2}{\zeta_{k_0}} + \int_{\zeta_{k_0}}^1 \q(u)\, \d u.
			\label{eq:InitCondClaim2}
		\end{align}
		Recall that
		\begin{align}
			Y_\q(\sigma,\alpha) \overset{\d}{=} \sum_{i = 1}^N \sum_{k = 0}^M (q_{k} - q_{k - 1} )^{1/2} z_{\restr{\s}{i},\restr{\a}{k}},
		\end{align}
		where each $z_{\restr{\s}{i},\restr{\a}{k}}$ is from a family of i.i.d.\ standard Gaussians.
		For $\q\in\qmt$, we have by Lemma~\ref{lem:RecursiveAveragingFreeEnergy} that
		\begin{align}
			\Psi(\q)
			&= \lim_{N\uparrow \infty} F_N(0,\q)\notag\\
			&=  q_M - \lim_{N\uparrow \infty}\frac{1}{N}
			\E\Bigg[
			\log\Bigg(
			\E\bigg[
			\E\bigg[ 
			\dots
			\E\big[
			Z_{0,\q}(z_0,\dots, z_M)^{\z_M}
			\big\vert\,
			\FF_{M-1}
			\big]^{\frac{\z_{M-1}}{\z_M}}
			\dots
			\bigg\vert\,\FF_1
			\bigg]^{\frac{\z_1}{\z_2}}
			\bigg\vert\, \FF_0
			\bigg]^{\frac{1}{\z_1}}
			\Bigg)
			\Bigg],\label{eq:InitCondPsi1}
		\end{align}
		recalling that
		\begin{align}
			Z_{0,\q}(z_0,\dots, z_M) &= \sum_{\s\in\cbrac{-1,1}^N} \exp\brac{\sqrt{2}\, \textstyle\sum_{j=0}^M (q_j - q_{j-1})^{\frac{1}{2}} z_j(\sigma)},\notag\\
			\FF_k &= \s(z_0,\dots,z_k), \quad k=0,\dots, M-1.
			\label{eq:InitCondNotationX}
		\end{align}
		Furthermore, $z_1 = (z_1(\s))_{\s\in\cbrac{-1,1}^N}, \dots, z_M = (z_M(\s))_{\s\in\cbrac{-1,1}^N}$ are i.i.d.\ copies of $z_0 \coloneqq (z_0(\s))_{\s\in\cbrac{-1,1}^N}$, a branching random walk on the $N$-level binary tree with standard Gaussian increments. 
		We prove \eqref{eq:InitCondClaim2} by presenting matching upper and lower bounds of $\Psi$.
		
		\noindent \textbf{Lower bound:} 
		Since the function $x \mapsto x^{\z_M}$ is concave and thus subadditive, 
		\begin{align}
			&\cEX{
				Z_{0,\q}(z_0, \dots, z_M)^{\zeta_{M}}
			}{
				\FF_{M-1}
			}\notag\\
			&=
			\cEX{
				\brac{\textstyle\sum_{\s\in\cbrac{-1,1}^N} \exp\brac{\textstyle \sqrt{2} \sum_{j=0}^{M} (q_j-q_{j-1})^{1/2} z_j(\s)}}^{\zeta_{M}}
			}{
				\FF_{M-1}
			}\notag\\
			&\leq 
			\cEX{
				\textstyle\sum_{\s\in\cbrac{-1,1}^N} \exp\brac{\textstyle\sqrt{2}\,\z_M\sum_{j=0}^{M} (q_j-q_{j-1})^{1/2} z_j(\s)}
			}{
				\FF_{M-1}
			}\notag\\
			&=\sum_{\s\in\cbrac{-1,1}^N}
			\exp\brac{\textstyle\sqrt{2}\,\z_M\sum_{j=0}^{M-1} (q_j-q_{j-1})^{1/2} z_j(\s)}
			\EX{\sqrt{2}\,\z_M(q_M-q_{M-1})^{1/2} z_M(\s)}\notag\\
			&=\sum_{\s\in\cbrac{-1,1}^N}
			\exp\brac{\textstyle\sqrt{2}\,\z_M\sum_{j=0}^{M-1} (q_j-q_{j-1})^{1/2} z_j(\s)}
			\exp\brac{\z_M^2(q_M-q_{M-1})N} \nonumber \\
            &=
            Z_{0,\zeta_M^2\q}(z_0,\ldots,z_{M-1}) \exp\brac{\z_M^2(q_M-q_{M-1})N},
			\label{eq:InitCondUpper1}
		\end{align}
		where \eqref{eq:InitCondLower1} follows from the fact that $z_M(\sigma)\sim \N(0,N)$ with respect to $\EX{\cdot}$.
		Inserting \eqref{eq:InitCondUpper1} into the iterating expectation  in \eqref{eq:InitCondPsi1} and then proceeding as in \eqref{eq:InitCondUpper1} up to level $k_0$, we see that 
		\begin{align}
			&\frac{1}{N}
			\E\Bigg[
			\log\Bigg(
			\E\bigg[
			\E\bigg[ 
			\dots
			\E\Big[
			\E\big[
			Z_{0,\q}(z_0,\dots, z_M)^{\z_M}
			\big\vert\,
			\FF_{M-1}
			\big]^{\frac{\z_{M-1}}{\z_M}}
			\,\Big\vert\,
			\FF_{M-2}
			\Big]^{\frac{\z_{M-2}}{\z_{M-1}}}
			\dots
			\bigg\vert\,\FF_1
			\bigg]^{\frac{\z_1}{\z_2}}
			\bigg\vert\, \FF_0
			\bigg]^{\frac{1}{\z_1}}
			\Bigg)
			\Bigg]\notag\\
			&\leq
			\z_M(q_M-q_{M-1})
                \nonumber \\
                &
                +\frac{1}{N}
			\E\Bigg[
			\log\Bigg(
			\E\bigg[
			\E\bigg[ 
			\dots
            % \nonumber \\
            % &\qquad\qquad\qquad\qquad
			\E\Big[
			\brac{
				% \textstyle\sum_{\s\in\cbrac{-1,1}^N}
				% \exp\brac{\sqrt{2}\,\z_M\sum_{j=0}^{M-1} (q_j-q_{j-1})^{1/2} z_j(\s)}
                Z_{0,\zeta_M^2\q}(z_0,\ldots,z_{M-1})
			}^{\frac{\z_{M-1}}{\z_M}}
			\,\Big\vert\,
			\FF_{M-2}
			\Big]^{\frac{\z_{M-2}}{\z_{M-1}}}
			% \nonumber \\
			% &\qquad\qquad\qquad\qquad\qquad\qquad\qquad\qquad\qquad\qquad\qquad\qquad\qquad\qquad\qquad\qquad   
			\dots
			\bigg\vert\,\FF_1
			\bigg]^{\frac{\z_1}{\z_2}}
			\bigg\vert\, \FF_0
			\bigg]^{\frac{1}{\z_1}}
			\Bigg)
			\Bigg]\notag\\
			&\leq
			\sum_{j=k_0+1}^M \z_j(q_j-q_{j-1})\notag\\
			&+\frac{1}{N}
			\E\Bigg[
			\log\Bigg(
			\E\bigg[
			\E\bigg[ 
			\dots
            % \nonumber \\
            % &\qquad
			\E\Big[
			\brac{
				% \textstyle\sum_{\s\in\cbrac{-1,1}^N}
				% \exp\brac{\sqrt{2}\,\z_{k_0+1}\sum_{j=0}^{k_0} (q_j-q_{j-1})^{1/2} z_j(\s)}
                Z_{0,\zeta_{k_0+1}^2\q}(z_0,\ldots,z_{k_0})
			}^{\frac{\z_{k_0}}{\z_{k_0+1}}}
                % \nonumber \\
                % &\qquad\qquad\qquad\qquad\qquad\qquad\qquad\qquad\qquad\qquad\qquad\qquad\qquad
                \,\Big\vert\,
			\FF_{k_0-1}
			\Big]^{\frac{\z_{k_0-1}}{\z_{k_0}}}
			\dots
			\bigg\vert\,\FF_1
			\bigg]^{\frac{\z_1}{\z_2}}
			\bigg\vert\, \FF_0
			\bigg]^{\frac{1}{\z_1}}
			\Bigg)
			\Bigg].\label{eq:InitCondLower2first}  
		\end{align}
		By subadditivity of $x\mapsto x^{\frac{\z_{k_0}}{\z_{k_0+1}}}$ and then Jensen's inequality (applied to the concave functions $x\mapsto x^{\frac{\z_{k_0-1}}{\z_{k_0}}}$, $x\mapsto x^{\frac{\z_{k_0-2}}{\z_{k_0}}}$, \dots, $x\mapsto x^{\frac{\z_{1}}{\z_{k_0}}}$ and $x\mapsto\log x$),
		\begin{align}
			&\E\Bigg[
			\log\Bigg(
			\E\bigg[
			\E\bigg[ 
			\dots
            % \nonumber \\
            % &\qquad
			\E\Big[
			\brac{
				% \textstyle\sum_{\s\in\cbrac{-1,1}^N}
				% \exp\brac{\sqrt{2}\,\z_{k_0+1}\sum_{j=0}^{k_0} (q_j-q_{j-1})^{1/2} z_j(\s)}
                Z_{0,\zeta_{k_0+1}^2\q}(z_0,\ldots,z_{k_0})
			}^{\frac{\z_{k_0}}{\z_{k_0+1}}}
			\,\Big\vert\,
			\FF_{k_0-1}
			\Big]^{\frac{\z_{k_0-1}}{\z_{k_0}}}
            % \nonumber \\
            % &\qquad\qquad\qquad\qquad\qquad\qquad\qquad\qquad\qquad\qquad\qquad\qquad\qquad\qquad\qquad
			\dots
			\bigg\vert\,\FF_1
			\bigg]^{\frac{\z_1}{\z_2}}
			\bigg\vert\, \FF_0
			\bigg]^{\frac{1}{\z_1}}
			\Bigg)
			\Bigg]\notag\\
			&\leq 
			\E\Bigg[
			\log\Bigg(
			\E\bigg[
			\E\bigg[ 
			\dots
                % \nonumber \\
                % &\qquad
			\E\Big[
			% \textstyle\sum_{\s\in\cbrac{-1,1}^N}
			% \exp\brac{\sqrt{2}\,\z_{k_0}\sum_{j=0}^{k_0} (q_j-q_{j-1})^{1/2} z_j(\s)}
            Z_{0,\zeta_{k_0}^2\q}(z_0,\ldots,z_{k_0})
			\,\Big\vert\,
			\FF_{k_0-1}
			\Big]^{\frac{\z_{k_0-1}}{\z_{k_0}}}
			\dots
			\bigg\vert\,\FF_1
			\bigg]^{\frac{\z_1}{\z_2}}
			\bigg\vert\, \FF_0
			\bigg]^{\frac{1}{\z_1}}
			\Bigg)
			\Bigg]\notag\\
			&\leq 
			\frac{1}{\z_{k_0}}\log\Bigg(
			\E\Bigg[
			\E\bigg[
			\dots
			\E\Big[
			% \textstyle\sum_{\s\in\cbrac{-1,1}^N}
			% \exp\brac{\sqrt{2}\,\z_{k_0}\sum_{j=0}^{k_0} (q_j-q_{j-1})^{1/2} z_j(\s)}
                Z_{0,\zeta_{k_0}^2\q}(z_0,\ldots,z_{k_0})
			\,\Big\vert\,
			\FF_{k_0-1}
			\Big]
			\dots
			\bigg\vert\, \FF_0
			\bigg]
			\Bigg]
			\Bigg)\notag\\
			&=
			\frac{1}{\z_{k_0}}
			\log\Big(
			\E\Big[
			% \textstyle\sum_{\s\in\cbrac{-1,1}^N}
			% \exp\brac{\sqrt{2}\,\z_{k_0}\sum_{j=0}^{k_0} (q_j-q_{j-1})^{1/2} z_j(\s)}
                Z_{0,\zeta_{k_0}^2\q}(z_0,\ldots,z_{k_0})
			\Big]
			\Big)\notag\\
			&=
			\frac{1}{\z_{k_0}}
			\log
			\brac{
				2^N \exp\brac{\z_{k_0}^2 q_{k_0} N}
			}\notag\\
			&= N \sfrac{\log 2}{\z_{k_0}} + \z_{k_0} q_{k_0} N. \label{eq:InitCondLower2first.1}
		\end{align}
		Plugging  \eqref{eq:InitCondLower2first} and \eqref{eq:InitCondLower2first.1} back to \eqref{eq:InitCondPsi1} and then using $q_{k_0} = \sfrac{\log 2}{\z_{k_0}^2}$, we obtain
		\begin{align}
			\Psi(\q)
			\geq q_M - \sum_{j=k_0+1}^M \z_j(q_j-q_{j-1}) - \sfrac{\log 2}{\z_{k_0}} - \z_{k_0} q_{k_0}
			= q_M - \sum_{j=k_0+1}^M \z_j(q_j-q_{j-1}) - 2\sqrt{q_{k_0}\log 2}.
		\end{align}
		Since
		\begin{align}\label{eq:InitCondLowerEnd-1}
			- \sfrac{\log 2}{\z_{k_0}} + \int_{\z_{k_0}}^1 \q(u)\, \d u
			&=
			- \sfrac{\log 2}{\z_{k_0}} 
			+ \sum_{j=k_0}^M (\z_{k_0+1}-\z_{k_0}) q_{k_0}\notag\\
			&=
			- \sfrac{\log 2}{\z_{k_0}} 
			- \z_{k_0} q_{k_0} 
			+ q_M
			- \sum_{j=k_0+1}^M \z_j(q_j-q_{j-1})\notag\\
			&=
			q_M - \sum_{j=k_0+1}^M \z_j(q_j-q_{j-1}) - 2\sqrt{q_{k_0}\log 2},
		\end{align}
		we have shown that
		\begin{align}\label{eq:InitCondLowerEnd}
			\Psi(\q)
			\geq
			- \sfrac{\log 2}{\z_{k_0}} + \int_{\z_{k_0}}^1 \q(u)\, \d u.
		\end{align}
		Thus, to prove \eqref{eq:InitCondClaim2}, we need to find an upper bound which matches \eqref{eq:InitCondLowerEnd}.

		\noindent \textbf{Upper bound:} If $k \leq M-1$, we define the Gibbs measure corresponding to level $k$ as
		\begin{align}
			\mu_{\q,k}\brac{\s} &\coloneqq \frac{1}{\tilde Z_{\q,k}} \exp\brac{\textstyle\sqrt{2}\sum\limits_{j=0}^{k} (q_j-q_{j-1})^{1/2} z_j(\s)} \quad \forall\,\s\in\cbrac{-1,1}^N,\notag\\
			\tilde Z_{\q,k}&\coloneqq \tilde Z_{\q,k}(z_0,\dots,z_{k}) \coloneqq \sum_{\s\in\cbrac{-1,1}^N} \exp\brac{\textstyle\sqrt{2}\sum\limits_{j=0}^{k} (q_j-q_{j-1})^{1/2} z_j(\s)}. 
		\end{align}
		We have
		\begin{align}
			&\cEX{
				Z_{0,\q}(z_0, \dots, z_M)^{\zeta_{M}}
			}{
				\FF_{M-1}
			}\notag\\
			&=
			\tilde Z_{\q,k}^{\zeta_{M}}
			\cEX{
				\textstyle
				\brac{
					\sum_{\sigma\in\{-1,1\}^N}
					\mu_{\q,k}(\sigma)
					\exp\brac{
						\sqrt{2} \sum_{j=k+1}^M (q_j - q_{j-1})^{1/2} z_j(\sigma)
					}
				}^{\zeta_{M}}
			}{
				\FF_{M-1}
			}\notag\\
			&=    
			\tilde Z_{\q,k}^{\zeta_{M}}
			\cEX{
				\E_{\mu_{\q,k}}
				\left[
				\exp\brac{\textstyle
					\sqrt{2} \sum_{j=k+1}^M (q_j - q_{j-1})^{1/2} z_j(\sigma)
				}
				\right]^{\zeta_{M}}
			}{
				\FF_{M-1}
			}.\label{eq:lowerBoundMuKJensenBla}
		\end{align}
		Then, by applying Jensen's inequality to $\E_{\mu_{\q,k}}$,
		\begin{align}
			&\E_{\mu_{\q,k}}
			\left[
			\exp\brac{\textstyle
				\sqrt{2} \sum_{j=k+1}^M (q_j - q_{j-1})^{1/2} z_j(\sigma)
			}
			\right]^{\zeta_{M}}\notag\\
			& \geq
			\E_{\mu_{\q,k}}
			\left[
			\exp\brac{\textstyle
				\sqrt{2}\,\zeta_{M}\sum_{j=k+1}^M (q_j - q_{j-1})^{1/2} z_j(\sigma)
			}
			\right]\notag\\
			&=
			\sum_{\sigma\in\{-1,1\}^N}
			\mu_{\q,k}(\sigma)
			\exp\brac{\textstyle
				\sqrt{2}\,\zeta_{M}\sum_{i=k+1}^{M} (q_i - q_{i-1})^{1/2} z_i(\sigma)
			}.\label{eq:lowerBoundMuKJensenBla2}
		\end{align}
		Inserting \eqref{eq:lowerBoundMuKJensenBla2} into \eqref{eq:lowerBoundMuKJensenBla} and then using that $\FF_{M-1} = \s(z_0,\dots,z_{M-1})$ and that $z_M$ is independent of $\FF_{M-1}$, we get
		\begin{align}
			&\cEX{
				Z_{0,\q}(z_0, \dots, z_M)^{\zeta_{M}}
			}{
				\FF_{M-1}
			}\notag\\
			&\geq
			\tilde Z_{\q,k}^{\zeta_{M}}
			\cEX{
				\sum_{\sigma\in\{-1,1\}^N}
				\mu_{\q,k}(\sigma)
				\exp\brac{\textstyle
					\sqrt{2}\,\zeta_{M}\sum_{i=k+1}^{M} (q_i - q_{i-1})^{1/2} z_i(\sigma)
				}
			}{
				\FF_{M-1}
			}\notag\\
			&=
			\tilde Z_{\q,k}^{\zeta_{M}}
			\sum_{\sigma\in\{-1,1\}^N}
			\mu_{\q,k}(\sigma)
			\exp\brac{\textstyle
				\sqrt{2}\,\zeta_{M}\sum_{i=k+1}^{M-1} (q_i - q_{i-1})^{1/2} z_i(\sigma)
			}
			\EX{
				\eee^{
					\sqrt{2}\,\zeta_{M}(q_M - q_{M-1})^{1/2} z_M(\sigma)
				}
			}\notag\\
			&=
			\exp\brac{\zeta_{M}^2\brac{q_M-q_{M-1}}N}
			\tilde Z_{\q,k}^{\zeta_{M}}
			\sum_{\sigma\in\{-1,1\}^N}
			\mu_{\q,k}(\s)
			\exp\brac{\textstyle
				\sqrt{2}\,\zeta_{M}\sum_{i=k+1}^{M-1} (q_i - q_{i-1})^{1/2} z_i(\sigma)
			}.\label{eq:lowerBoundMuKJensen}
		\end{align}
		Inserting \eqref{eq:lowerBoundMuKJensen} into \eqref{eq:InitCondPsi1} and then proceeding analogously to \eqref{eq:lowerBoundMuKJensen} up to level $k$, we get that
		\begin{align}
			&\Psi(\q) \notag\\
			&\leq
			q_M - \z_M(q_M-q_{M-1})\notag\\
			&\quad - \lim_{N\uparrow \infty} \frac{1}{N}
			\E\Bigg[
			\log\Bigg(
			\E\bigg[
			\E\bigg[ 
			%                 \dots\notag\\
			% &\qquad
			\dots 
			\E\Big[(\tilde Z_{\q,k})^{\zeta_{M-1}}
			\E_{\mu_{\q,k}}
			\left[
			\exp\brac{\textstyle
				\sqrt{2}\,\zeta_{M}\sum_{i=k+1}^{M-1} (q_i - q_{i-1})^{1/2} z_i(\sigma)
			}
			\right]^{\frac{\z_{M-1}}{\z_M}}\notag\\
			&\qquad\qquad\qquad\qquad\qquad\qquad\qquad
			\,\Big\vert\,
			\FF_{M-2}
			\Big]^{\frac{\z_{M-2}}{\z_{M-1}}}
			\dots
			\bigg\vert\,\FF_1
			\bigg]^{\frac{\z_1}{\z_2}}
			\bigg\vert\, \FF_0
			\bigg]^{\frac{1}{\z_1}}
			\Bigg)
			\Bigg]\notag\\
			&\leq
			q_M - \sum_{j=k+1}^{M}
			\z_j(q_j-q_{j-1}) 
			- \lim_{N\uparrow \infty} \frac{1}{N} X^{(k)}_{N},\label{eq:nestedExpectation3}
		\end{align}
		where
		\begin{align}
			& X^{(k)}_{N} \coloneqq
			\E\Bigg[
			\log\Bigg(
			\E\bigg[
			\E\bigg[ 
			%                 \dots\notag\\
			% &\qquad
			\dots 
			\E\Big[(\tilde Z_{\q,k})^{\zeta_{k}}
			\Big\vert\,
			\FF_{k-1}
			\Big]^{\frac{\z_{k-1}}{\z_{k}}}
			\dots
			\bigg\vert\,\FF_1
			\bigg]^{\frac{\z_1}{\z_2}}
			\bigg\vert\, \FF_0
			\bigg]^{\frac{1}{\z_1}}
			\Bigg)
			\Bigg].
		\end{align}
		In the case $k=M$, $X^{(M)}_{N}$ is the nested expectation in \eqref{eq:InitCondPsi1}, i.e., we did not need to apply the steps in \eqref{eq:nestedExpectation3} in this case.
		Let $a=(a_0,\dots,a_k)\in\R_+^{k+1}$ with $\norm{a}_2^2 < 2\log 2$ and $(\bz(\sigma))_{\sigma\in\{-1,1\}^N} = (z_0(\sigma),\ldots,z_k(\sigma))_{\sigma\in\{-1,1\}^N}$. We set
		\begin{align}
			T_a(N) \coloneqq \cbrac{\s\in\cbrac{-1,1}^N \colon 
				N^{-1}\bz(\s) \in [a_0,\infty)\times\cdots\times [a_k,\infty)}.
		\end{align}
		Then, for any $k=1,\dots, M$, 
		\begin{align}
			&X^{(k)}_{N}
            \nonumber \\
			& =
			\E\Bigg[
			\brac{
				\1_{\abs{T_a(N)} \geq 1}
				+ \1_{\abs{T_a(N)}= 0}
			}
			\log\Bigg(
			\E\bigg[
			\E\bigg[ 
			\dots 
			\E\Big[(\tilde Z_{\q,k})^{\zeta_{k}}
			\Big\vert\,
			\FF_{k-1}
			\Big]^{\frac{\z_{k-1}}{\z_{k}}}
			\dots
			\bigg\vert\,\FF_1
			\bigg]^{\frac{\z_1}{\z_2}}
			\bigg\vert\, \FF_0
			\bigg]^{\frac{1}{\z_1}}
			\Bigg)
			\Bigg]\notag\\
			&\geq
			\E\Bigg[ \1_{\abs{T_a(N)} \geq 1}
			\log\Bigg(
			\E\bigg[
			\E\bigg[ 
			\dots 
			\E\Big[
			\brac{ 
				\textstyle\sum_{\s\in\cbrac{-1,1}^N}
				\1_{\s\in T_a(N)}
				\exp\brac{
					\sqrt{2}
					\sum_{j=0}^k
					(q_j-q_{j-1})^{1/2}
					z_j(\s)
				}
			}^{\z_k}\notag\\
			&\qquad\qquad\qquad\qquad\qquad
			\,\Big\vert\,
			\FF_{k-1}
			\Big]^{\frac{\z_{k-1}}{\z_{k}}}
			\dots
			\bigg\vert\,\FF_1
			\bigg]^{\frac{\z_1}{\z_2}}
			\bigg\vert\, \FF_0
			\bigg]^{\frac{1}{\z_1}}
			\Bigg)
			\Bigg]\notag\\
			&\quad 
			+ \E\Bigg[
			\1_{\abs{T_a(N)} = 0}
			\log\Bigg(
			\E\bigg[
			\E\bigg[ 
			%                 \dots\notag\\
			% &\qquad
			\dots 
			\E\Big[(\tilde Z_{\q,k})^{\zeta_{k}}
			\Big\vert\,
			\FF_{k-1}
			\Big]^{\frac{\z_{k-1}}{\z_{k}}}
			\dots
			\bigg\vert\,\FF_1
			\bigg]^{\frac{\z_1}{\z_2}}
			\bigg\vert\, \FF_0
			\bigg]^{\frac{1}{\z_1}}
			\Bigg)
			\Bigg]. \label{eq:InitCondLower1}
		\end{align}
		For $\s\in T_a$ and $j=0,\dots,k$, we have $z_j(\s)\geq a_j$ so the first summand of the last line of \eqref{eq:InitCondLower1} satisfies
		\begin{align}
			&\E\Bigg[ \1_{\abs{T_a(N)} \geq 1}
			\log\Bigg(
			\E\bigg[
			\E\bigg[ 
			\dots 
			\E\Big[
			\brac{ 
				\textstyle\sum_{\s\in\cbrac{-1,1}^N}
				\1_{\s\in T_a(N)}
				\exp\brac{
					\sqrt{2}
					\sum_{j=0}^k
					(q_j-q_{j-1})^{1/2}
					z_j(\s)
				}
			}^{\z_k}\notag\\
			&\qquad\qquad\qquad\qquad\qquad
			\,\Big\vert\,
			\FF_{k-1}
			\Big]^{\frac{\z_{k-1}}{\z_{k}}}
			\dots
			\bigg\vert\,\FF_1
			\bigg]^{\frac{\z_1}{\z_2}}
			\bigg\vert\, \FF_0
			\bigg]^{\frac{1}{\z_1}}
			\Bigg)
			\Bigg]\notag\\
			&\geq
			\sqrt{2}\,
			\PR{
				\abs{T_a(N)}\geq 1
			}
			\sum_{j=0}^k (q_j-q_{j-1})^{1/2} a_j N\notag\\
			&\quad
			+
			\E\Bigg[
			\1_{\abs{T_a(N)} \geq 1}
			\log\Bigg(
			\E\bigg[
			\E\bigg[ 
			\dots 
			\E\Big[\abs{T_a(N)}^{\zeta_{k}}
			\Big\vert\,
			\FF_{k-1}
			\Big]^{\frac{\z_{k-1}}{\z_{k}}}
			\dots
			\bigg\vert\,\FF_1
			\bigg]^{\frac{\z_1}{\z_2}}
			\bigg\vert\, \FF_0
			\bigg]^{\frac{1}{\z_1}}
			\Bigg)
			\Bigg]\notag\\
			&\geq
			\sqrt{2}\,
			\PR{
				\abs{T_a(N)}\geq 1
			}
			\sum_{j=0}^k (q_j-q_{j-1})^{1/2} a_j N.\label{eq:InitCondLower1.5}
		\end{align}
		For the second summand of the last line of \eqref{eq:InitCondLower1}, we estimate, for any $\tilde\s \in \cbrac{-1,1}^N$,
		\begin{align}
			&\E\Bigg[
			\1_{\abs{T_a(N)} = 0}
			\log\Bigg(
			\E\bigg[
			\E\bigg[ 
			\dots 
			\E\Big[(\tilde Z_{\q,k})^{\zeta_{k}}
			\Big\vert\,
			\FF_{k-1}
			\Big]^{\frac{\z_{k-1}}{\z_{k}}}
			\dots
			\bigg\vert\,\FF_1
			\bigg]^{\frac{\z_1}{\z_2}}
			\bigg\vert\, \FF_0
			\bigg]^{\frac{1}{\z_1}}
			\Bigg)
			\Bigg]\notag\\
			&\geq
			\E\Bigg[
			\1_{\abs{T_a(N)} = 0}
			\log\Bigg(
			\E\bigg[
			\E\bigg[ 
			\dots 
			\E\Big[
			\exp\brac{\sqrt{2}\,\z_k
				\textstyle\sum_{j=0}^k
				(q_j-q_{j-1})^{1/2}
				z_j(\tilde\s)
			}
			\Big\vert\,
			\FF_{k-1}
			\Big]^{\frac{\z_{k-1}}{\z_{k}}}
            \nonumber \\
            &\qquad\qquad\qquad\qquad\qquad\qquad\qquad\qquad\qquad\qquad\qquad\qquad\qquad
			\dots
			\bigg\vert\,\FF_1
			\bigg]^{\frac{\z_1}{\z_2}}
			\bigg\vert\, \FF_0
			\bigg]^{\frac{1}{\z_1}}
			\Bigg)
			\Bigg]   
            \notag\\
			&\geq
			\E\Bigg[
			\1_{\abs{T_a(N)} = 0}
			\sfrac{1}{\z_1}
			\log\Bigg(
			\E\bigg[
			\exp\brac{
				\sqrt{2}\,\z_1
				\textstyle\sum_{j=0}^k
				(q_j-q_{j-1})^{1/2}
				z_j(\tilde\s)
			}
			\bigg\vert\, \FF_0
			\bigg]
			\Bigg)
			\Bigg], \label{eq:InitCondLower2}
		\end{align}
		using Jensen's inequality for the concave functions $x\mapsto x^{\frac{\z_{k-1}}{\z_k}}$, \dots, $x\mapsto x^{\frac{\z_1}{\z_2}}$ in the last step. 
		By the independence of $z_1, \dots, z_k$ and $\FF_0$ and 
        the fact that that $z_k(\sigma)\sim \N(0,N)$ 
        we get,
		\begin{align}
			&\E\bigg[
			\exp\brac{
				\sqrt{2}\,\z_1
				\textstyle\sum_{j=0}^k
				(q_j-q_{j-1})^{1/2}
				z_j(\tilde\s)
			}
			\bigg\vert\, \FF_0
			\bigg]\notag\\
			&= \exp\brac{\sqrt{2}\,\z_1 q_0^{1/2} z_0(\tilde\s)}
			\,\EX{\exp\brac{
					\sqrt{2}\,\z_1
					\textstyle\sum_{j=1}^k
					(q_j-q_{j-1})^{1/2}
					z_j(\tilde\s)
			}}\notag\\
			&=\exp\brac{\sqrt{2}\,\z_1 q_0^{1/2} z_0(\tilde\s)}
			\,\exp\brac{\z_1^2 (q_M - q_0)N}.\label{eq:InitCondLower3.1}
		\end{align}
		Inserting \eqref{eq:InitCondLower3.1} into \eqref{eq:InitCondLower2} gives
		\begin{align}
			&\E\Bigg[
			\1_{\abs{T_a(N)} = 0}
			\log\Bigg(
			\E\bigg[
			\E\bigg[ 
			\dots 
			\E\Big[(\tilde Z_{\q,k})^{\zeta_{k}}
			\Big\vert\,
			\FF_{k-1}
			\Big]^{\frac{\z_{k-1}}{\z_{k}}}
			\dots
			\bigg\vert\,\FF_1
			\bigg]^{\frac{\z_1}{\z_2}}
			\bigg\vert\, \FF_0
			\bigg]^{\frac{1}{\z_1}}
			\Bigg)
			\Bigg]\notag\\
			&\geq
			\z_1 (q_M-q_0)\, \PR{\abs{T_a(N)} = 0} N + \EX{\1_{\abs{T_a(N)} = 0} \, q_0^{1/2} z_0(\tilde\s)}.
			\label{eq:InitCondLower4.1}
		\end{align}
		The last summand of the last line of \eqref{eq:InitCondLower4.1} satisfies
		\begin{align}
			&\EX{\1_{\abs{T_a(N)} = 0} \, q_0^{1/2} z_0(\tilde\s)}\notag\\
			&= \EX{\1_{\abs{T_a(N)} = 0} \1_{ z_0(\tilde\s)\geq -N^{2/3}} \, q_0^{1/2} z_0(\tilde\s)} + \EX{\1_{\abs{T_a(N)} = 0} \1_{ z_0(\tilde\s)<-N^{2/3}} \, q_0^{1/2} z_0(\tilde\s)},
		\end{align}
		where
		\begin{align}
			\EX{\1_{\abs{T_a(N)} = 0} \1_{ z_0(\tilde\s)\geq -N^{2/3}} \, q_0^{1/2} z_0(\tilde\s)} 
			&\geq - q_0^{1/2} N^{2/3}\, \PR{\abs{T_a(N)} = 0,  z_0(\tilde\s)\geq -N^{2/3}}\notag\\
			&\geq - q_0^{1/2} N^{2/3}\, \PR{\abs{T_a(N)} = 0},\label{eq:InitCondLower3}
		\end{align}
		and, since $z_0(\tilde\s)$ is negative,
		\begin{align}
			\EX{\1_{\abs{T_a(N)} = 0} \1_{ z_0(\tilde\s)<-N^{2/3}} \, q_0^{1/2} z_0(\tilde\s)}
			&\geq
			q_0^{1/2}\, \EX{\1_{ z_0(\tilde\s)<-N^{2/3}} \, z_0(\tilde\s)}\notag\\
			&=
			q_0^{1/2} \int_{-\infty}^{-N^{2/3}} \sfrac{\d y}{\sqrt{2\pi}N} y \eee^{-\frac{y^2}{2N}}\notag\\
			&=
			-\sfrac{1}{\sqrt{2\pi}} \eee^{-\frac{N^{2/3}}{2}}.\label{eq:InitCondLower4}
		\end{align}
		Combining the estimates in \eqref{eq:InitCondLower1}--\eqref{eq:InitCondLower4}, we see that
		\begin{align}
			\frac{1}{N} X^{(k)}_{N}\geq
			\sqrt{2}\,
			\PR{
				\abs{T_a(N)}\geq 1
			}
			\sum_{j=0}^k (q_j-q_{j-1})^{1/2} a_j 
			+
			\z_1 (q_M-q_0)\, \PR{\abs{T_a(N)} = 0} 
			+ o_N(1).
		\end{align}
		Note that with respect to $\mathbb{P}(\cdot)$, $(\bz(\sigma))_{\sigma\in\{-1,1\}^N} = (z_0(\sigma),\ldots,z_k(\sigma))_{\sigma\in\{-1,1\}^N}$ is a binary $(k+1)$-dimensional branching random walk where the increment distributes as a $(k+1)$-dimensional standard Gaussian distribution, i.e., all of the components are independent and each component distributed as a standard Gaussian variable. 
        Denote by 
		\[\mathcal{R}_N = 
		\left\{N^{-1}\bz(\sigma):\sigma\in\{-1,1\}^N\right\}
		\] 
		the normalized range of $(\bz(\sigma))_{\sigma\in\{-1,1\}^N}$. In \cite{biggins1978asymptotic}, Biggins characterized the limiting shape of $\cR_N$ by the ball centered at $0$ with radius $\sqrt{2\log 2}$
        \[
        \mathcal{B}(0,\sqrt{2\log 2})
        =
        \{x:\norm{x}_2\leq \sqrt{2\log 2}\}.
        \]
        In particular, 
        Biggins showed in \cite[Theorem A(ii)]{biggins1978asymptotic} that for every interior point $x$ of $\mathcal{B}(0,\sqrt{2\log 2})$, there exists a sequence $x_N\in\mathcal{R}_N$ such that
		$x_N\rightarrow x$
		when $N$ goes to infinity. 
        On the other hand, since $\norm{a}_2<\sqrt{2\log 2}$, the set
		\begin{align*}
			\mathcal{S}
            =
            (\mathrm{int}\,\mathcal{B}(0,\sqrt{2\log 2}))
			\cap 
			\left(
			[a_0,\infty)\times\cdots\times [a_k,\infty)
			\right)
		\end{align*}
        is non-empty. Pick $x\in \mathcal{S}$, i.e., there exists $\rho>0$ such that $\mathcal{B}(x,\rho)\subseteq \mathcal{S}$.
		Then, Biggins' shape theorem implies in particular that almost surely $\cB(x,\rho)\cap \cR_N\neq\varnothing$ for $N$ sufficiently large. Thus,
		\begin{align}
            \lim_{N\rightarrow\infty}
			\PR{
				\abs{T_a(N)}\geq 1
			} 
            &=
            \lim_{N\rightarrow\infty}
            \PR{
            \cR_N\cap ([a_0,\infty)\times \cdots \times[a_k,\infty))
            \neq \varnothing
            }
            \nonumber \\
            &\geq 
            \lim_{N\rightarrow\infty}
            \PR{
            \cR_N\cap \cB(x,\rho)
            \neq \varnothing
            }
            =1.
            \label{eq:claim T=0}
		\end{align}
		\begin{remark}
			In Corollary 3.4.4 of \cite{alban_influence_2025}, one of the authors provided an alternative proof of \eqref{eq:claim T=0} without appealing to Biggins' shape theorem.
		\end{remark}
		Then, \eqref{eq:claim T=0} yields that
		\begin{align}\label{eq:LimXnkLowerBound}
			\lim_{N\uparrow \infty} \frac{1}{N} X^{(k)}_{N} \geq\sqrt{2}\,
			\sum_{j=0}^k (q_j-q_{j-1})^{1/2} a_j.
		\end{align}
		Taking the supremum over all $a=(a_0,\dots,a_k)\in\R_+^{k+1}$ with $\norm{a}_2^2 < 2\log 2$.
		we have
		\begin{align}
			&\sup\brac{
				\textstyle\sum_{j=0}^k (q_j-q_{j-1})^{1/2} a_j \colon a=(a_0,\dots,a_k)\in\R_+^{k+1},\, \norm{a}_2^2 < 2\log 2
			}\notag\\
			&=
			\max
			\brac{
				\textstyle\sum_{j=0}^k (q_j-q_{j-1})^{1/2} a_j \colon a=(a_0,\dots,a_k)\in\R_+^{k+1}, \,\norm{a}_2^2 \leq 2\log 2
			}\notag\\
			&=\sqrt{2 q_k \log 2},
		\end{align}
		since by the Cauchy--Schwartz inequality, the maximizer $a^*$ satisfies
		\begin{align}
			a^*_j =\sqrt{2\log 2}\, \frac{(q_j-q_{j-1})^{1/2}}{q_k^{1/2}}, \quad j=0,\dots,k.
		\end{align}
		Thus,
		\begin{align}
			\Psi(\q)
			\leq
			q_M - \sum_{j=k+1}^{M}
			\z_j(q_j-q_{j-1}) 
			- 2\sqrt{q_k \log 2}.\label{eq:InitCondFinalLowerBound}
		\end{align}
		By \eqref{eq:InitCondLowerEnd-1}, we have found a matching upper bound to \eqref{eq:InitCondLowerEnd}. Thus, \eqref{eq:InitCondClaim2} holds for $\q \in \qmt$ with $\q_{k_0} = \sfrac{\log 2}{\zeta_{k_0}^2}$.
	\end{proof}
	
	\section{Proof of Theorem~\ref{thm:main}} \label{sec:finitedim}
	
	% \todo{I think this is still required...
	% 	\begin{enumerate}
	% 		\item Start with a Gaussian comparison to argue that we can add regularity to $A$. In particular, we want $A$ to be convex and Lipschitz. There might be some issues for the boundary case. In particular, we might need another argument for the case where you have a jump at $1$.
	% 		\item I want to look at the finite $M$ HJE and show that the limiting free energy is a sub/supersolution. This is done by applying the comparison principle from Theorem 1.2 in Chen and Xia '24. Need to say a word on the regularity of the limiting FE.
	% 		\item By Gaussian comparison, the liminf/limsup of the FE is lower bound/upper bound by the solution of the HJE. 
	% 		\item (Extension from finite $M$) Apply Theorem 4.6 to show that the limit of these solutions is the HL formula
	% 		\item Prove that the HL formulae are the same.
	% 	\end{enumerate}
	% }
	
	%We recall the notation in Section~\ref{sec:derivative}. 
	Fix $M\in\N$. In this section, we assume that the $\zeta$'s are equidistant, i.e., we assume $\zeta_k = k/(M+1)$ for $k=0,\ldots,M+1$. This restriction can be done without loss of generality because $Q_2\subseteq Q_1$ by H\"{o}lder's inequality so Proposition~\ref{prop:LipschitzFn} implies that the free energies for general $\q\in Q_2$ can be approximated by step functions in $\cup_{M=1}^\infty \qmeq$ which was defined in \eqref{eq:Qequidist}. Also, we recall the notation in Section~\ref{sec:derivative} where we defined the closed convex cone
	\begin{align*}
		C_\leq^{(M)} = \cbrac{q=(q_0,\dots,q_M)\in \R^{M+1} \colon 0 \leq q_0 \leq \dots \leq q_M}
	\end{align*}
	and the free energy notation $F_N(t,q): \R_{\geq 0} \times C_{\leq}^{(M)}$ defined in \eqref{eq:F_N(t,q)}. Similarly, we can identify the initial condition with $q \in C_{\leq}^{(M)}$ by defining $$\Psi(q) = \Psi(\q_M) \text{ where } \q_M = \sum_{m=0}^{M} q_m \1_{[m/(M+1), (m+1)/(M+1))}.$$
	When we view $F_N$ and $\Psi$ as a function on $\R_{\geq 0}\times Q_2$ and $Q_2$, we adopt the notation $F_N(t,\q)$ and $\Psi(\q)$, respectively.
	
	To prove Theorem~\ref{thm:main}, we want to apply the Hamilton--Jacobi theory for mean-field disordered systems (see, for example \cite{chen_hamilton-jacobi_2023,Mourrat2023FreeEnergyUpperBound}). This theory requires monotonicity of the initial condition $\Psi$. Note that $Q_2$ is a closed convex cone on the Hilbert space $L_2\bkl{[0,1), \R}$. Its \emph{dual cone} is the closed convex cone defined by
	\begin{align}
		Q_2^* \coloneqq \gkl{\pp \in L_2\bkl{[0,1),\R}\colon \textstyle\int_0^1 \pp(u) \q(u) \,\d u \geq 0 \:\forall\,\q\in Q_2}.
	\end{align}
	Similarly, the dual cone ${C_\leq^{(M)}}^\ast$ of $C_\leq^{(M)}$ is defined by
	\begin{align*}
		{C_\leq^{(M)}}^\ast = 
		\gkl{
			p\in \R^{M+1} : p\cdot q\geq 0 \ \forall\, q\in C_\leq^{(M)}
		}.
	\end{align*}
	We say that a function $G:Q_2\rightarrow\R$ is \emph{$Q_2^\ast$-increasing} if for all $\q_1,\q_2\in Q_2$, 
	\begin{align*}
		\int_0^1 \pp(u)\q_1(u)\dd{u}
		\geq 
		\int_0^1 \pp(u)\q_2(u)\dd{u}, 
		\quad \text{for all} \quad \pp\in Q_2
	\end{align*}
	implies that $G(\q_1)\geq G(\q_2)$. Likewise, a function $J:C_\leq^{(M)}\rightarrow\R$ is \emph{${C_\leq^{(M)}}^\ast$-increasing} if for all $q_1,q_2\in C_\leq^{(M)}$, for all $p\in C_\leq^{(M)}$, $p\cdot q_1 \geq p\cdot q_2$
	implies that $J(q_1)\geq J(q_2)$. 
	\begin{lemma}\label{lem:PsiCStarIncreasing}
		For all $M\in\N$, the function $q\mapsto\Psi(q)$ defined on $C_\leq^{(M)}$ is ${C_\leq^{(M)}}^\ast$-increasing. Moreover, 
		the function $\q\mapsto\Psi(\q)$ defined on $Q_2$ is $Q_2^\ast$-increasing.
	\end{lemma}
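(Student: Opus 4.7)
The lemma is essentially a direct consequence of the explicit formula
\[
\Psi(\q) = -\log 2 + \int_0^1 \bigl(\q(u) - \sfrac{\log 2}{u^2}\bigr)_+ \d u
\]
furnished by Theorem~\ref{thm:InitCond}, once the dual-cone hypotheses are unwound via the bipolar theorem. The integrand is manifestly monotone in $\q$ pointwise, so all the work lies in translating the test against $Q_2^\ast$ (resp.\ ${C_\leq^{(M)}}^\ast$) into a pointwise inequality $\q_1\geq\q_2$.

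\textbf{Finite-dimensional part.} I would first observe that $C_\leq^{(M)}$ is the closed convex cone generated by the vectors $v_\ell = e_\ell + e_{\ell+1} + \dots + e_M$ for $\ell=0,\ldots,M$. A short calculation shows that
\[
{C_\leq^{(M)}}^\ast = \Bigl\{ p\in\R^{M+1} : \textstyle\sum_{k=\ell}^M p_k \geq 0 \text{ for all } \ell=0,\ldots,M\Bigr\}.
\]
Because $C_\leq^{(M)}$ is a closed convex cone in a finite-dimensional Euclidean space, the bipolar theorem gives ${C_\leq^{(M)}}^{\ast\ast}=C_\leq^{(M)}$. Consequently, the hypothesis $p\cdot q_1\geq p\cdot q_2$ for all $p\in {C_\leq^{(M)}}^\ast$ is equivalent to $q_1 - q_2 \in C_\leq^{(M)}$, which in particular forces the coordinate-wise bound $q_{1,k}\geq q_{2,k}$ for every $k$. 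Translated to the associated step functions $\q^{(1)},\q^{(2)}\in \qmeq$, this gives $\q^{(1)}(u)\geq \q^{(2)}(u)$ for a.e.\ $u\in [0,1)$. The formula from Theorem~\ref{thm:InitCond} then yields $\Psi(q_1)\geq \Psi(q_2)$, since $(x)_+$ is increasing in $x$.

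\textbf{Infinite-dimensional part.} The structure is identical. I would characterise
\[
Q_2^\ast = \Bigl\{ \pp\in L^2([0,1),\R) : \textstyle\int_x^1 \pp(u)\,\d u\geq 0 \ \forall\, x\in[0,1)\Bigr\}
\]
by testing against the indicators $\1_{[x,1)}\in Q_2$, and obtaining the converse by approximating any $\q\in Q_2$ by non-negative linear combinations of such indicators (using that $\q$ is increasing and non-negative). Next I would verify that $Q_2$ is closed in $L^2$: if $\q_n\to\q$ in $L^2$ with $\q_n\in Q_2$, pass to an a.e.-convergent subsequence to see that $\q$ admits a non-negative, increasing, right-continuous representative, so $\q\in Q_2$. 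The bipolar theorem then gives $Q_2^{\ast\ast}=Q_2$, and the hypothesis on $\q_1,\q_2$ reduces to $\q_1-\q_2\in Q_2$. In particular $\q_1\geq \q_2$ pointwise, and monotonicity of $(\cdot)_+$ combined with the explicit formula again delivers $\Psi(\q_1)\geq \Psi(\q_2)$.

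\textbf{Expected difficulty.} Essentially none beyond careful bookkeeping; the only step requiring a small verification is the closedness of $Q_2$ in $L^2$, which is needed to apply the bipolar theorem directly rather than to its closure. Everything else follows mechanically from Theorem~\ref{thm:InitCond}.
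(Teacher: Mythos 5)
You have misread the hypothesis in the definition of ``${C_\leq^{(M)}}^\ast$-increasing.'' The paper's definition quantifies the test vector $p$ over the \emph{primal} cone $C_\leq^{(M)}$, not the dual cone: $J$ is ${C_\leq^{(M)}}^\ast$-increasing if $p\cdot q_1 \geq p\cdot q_2$ for all $p\in C_\leq^{(M)}$ implies $J(q_1)\geq J(q_2)$. That hypothesis is, by definition of the dual cone, equivalent to $q_1-q_2\in {C_\leq^{(M)}}^\ast$ --- which is where the name comes from --- and \emph{not} to $q_1-q_2\in C_\leq^{(M)}$. Your invocation of the bipolar theorem assumes the test vectors range over ${C_\leq^{(M)}}^\ast$ and concludes $q_1-q_2\in{C_\leq^{(M)}}^{\ast\ast}=C_\leq^{(M)}$, but that is a different (and strictly stronger) hypothesis than the one in the lemma.

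The distinction is not cosmetic, because $C_\leq^{(M)}$ is not self-dual. Using your own description ${C_\leq^{(M)}}^\ast=\{p:\sum_{k\geq\ell}p_k\geq 0\ \forall\ell\}$, one sees for instance that $(-1,1)\in{C_\leq^{(1)}}^\ast\setminus C_\leq^{(1)}$; taking $q_2=(1,1)$ and $q_1=(0,2)$, both in $C_\leq^{(1)}$, gives $q_1-q_2=(-1,1)\in{C_\leq^{(1)}}^\ast$ yet $q_{1,0}<q_{2,0}$. So the hypothesis does \emph{not} force a coordinatewise bound, and the ``$(x)_+$ is monotone'' argument collapses. The same phenomenon occurs in the infinite-dimensional half: $\q_1-\q_2\in Q_2^\ast$ does not give $\q_1\geq\q_2$ a.e., since (for example) $u\mapsto 2u-1$ lies in $Q_2^\ast$ but is negative near $0$.

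The paper's proof handles this correctly by a strictly stronger structural input than mere coordinatewise monotonicity: Lemma~\ref{lem:GradientPsiIncreasing} shows $\nabla\Psi(q)\in C_\leq^{(M)}$, i.e.\ the partial derivatives are nonnegative \emph{and increasing in the index}. Then $\Psi(q_1)-\Psi(q_2)=\int_0^1(q_1-q_2)\cdot\nabla\Psi(q_\lambda)\,\d\lambda\geq 0$ precisely because $q_1-q_2\in{C_\leq^{(M)}}^\ast$ pairs nonnegatively with every element of $C_\leq^{(M)}$. To salvage your argument you would have to replace the pointwise step with this gradient-in-the-cone observation; the bipolar machinery is neither needed nor sufficient.
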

	\begin{proof}  
		Fix $M\in\N$. Let $q_0,q_1\in C_\leq^{(M)}$ be such that 
		for all $p\in C_\leq^{(M)}$, $p\cdot q_1 \geq p\cdot q_0$.
		Our goal is to prove that $\Psi(q_1)\geq \Psi(q_0)$. We first argue that we can assume that $q_0,q_1\in C_<^{(M)}$. Fix $\varepsilon>0$. For all $q_0,q_1\in C_\leq^{(M)}$, define $\tilde q_0$ and $\tilde q_1$ by $\tilde q_{i,k} =  q_{i,k}+\frac{k\varepsilon}{M+1}$ for $i=0,1$ and $k=0,\ldots,M+1$. By the definition, one sees that $\tilde q_0,\tilde q_1\in C_<^{(M)}$. Suppose that $\Psi$ defined on $C_<^{(M)}$ is ${C_\leq^{(M)}}^\ast$-increasing. Note that by our construction, we also have for all $p\in C_\leq^{(M)}$, $p\cdot \tilde q_1 \geq p\cdot \tilde q_0$, so we have $\Psi(\tilde q_1)\geq \Psi(\tilde q_0)$. On the other hand, by the inequality $(x-y)_+\leq \abs{x-y}$ for all $x,y\in \R$, we have $\abs{\Psi( q_i)-\Psi(\tilde q_i)}<\varepsilon$ for all $i=0,1$. Therefore, we have $\Psi(q_1)\geq \Psi(q_0)-2\varepsilon$. Since $\varepsilon>0$ is arbitrarily chosen, the proof is completed.
		
		From now on, we assume that $q_0,q_1\in C_<^{(M)}$.
		For each $\lambda \in [0,1]$,
		we set $q_\lambda = \lambda q_1 + (1-\lambda) q_0$. 
		Since $C_M^{(M)}$ is convex, $q_\lambda\in C_<^{(M)}$.
		% We set  
		% \begin{align}\label{eq:PsiCIncrDefPsiTilde}
			%     \tilde\Psi \colon C_<^{(M-1)} &\to \R,\notag\\
			%     (q_0,\dots,q_{M-1}) &\mapsto \Psi\kl{ \textstyle\sum_{j=0}^M q_j \1_{\left[\frac{j}{M}, \frac{j+1}{M}\right)}},
			% \end{align}
		% recalling that 
		% \begin{align}
			%     C_<^{(M-1)} = \cbrac{q=(q_0,\dots,q_{M-1})\in \R^{M} \colon 0 < q_0 < \dots < q_{M-1}}.
			% \end{align}
		By the fundamental theorem of calculus, denoting by $\partial_\lambda\Psi(q_{\lambda})$ the derivative of the function $\lambda\mapsto \Psi(q_\lambda)$ evaluated at $\lambda$, we have
		\begin{align}\label{eq:PsiCIncr20}
			\Psi(q_1) - \Psi(q_0)
			%     &=
			%     \tilde\Psi\kl{q^{(1)}_0,\dots,q^{(1)}_{M-1}} - \tilde\Psi\kl{q^{(0)}_0,\dots, q^{(0)}_{M-1}}
			=
			\int_0^1 \partial_\lambda\Psi(q_{\lambda}) \,\d{\lambda}.
		\end{align}
		Here, by the chain rule, 
		\begin{align}\label{eq:PsiCIncr21}
			\partial_\lambda\Psi(q_{\lambda}) = (q_1-q_0)\cdot \nabla\Psi(q_{\lambda}).
		\end{align}
		We have the following lemma on the monotonicity of gradient, whose proof will be provided right below the current proof.
		\begin{lemma}\label{lem:GradientPsiIncreasing}
			Let $M\in \N$. For all $q\in C_<^{(M)}$, the gradient $\nabla \Psi (q)$ lies in $C_\leq^{(M)}$.
		\end{lemma}
		Then, by Lemma~\ref{lem:GradientPsiIncreasing}, \eqref{eq:PsiCIncr21} and our assumption of $q_0$ and $q_1$, the integrand in \eqref{eq:PsiCIncr20} is non-negative. Therefore, we obtain that
		\begin{align*}
			\Psi(q_1) - \Psi(q_0)
			=
			\int_0^1 \partial_\lambda\Psi(q_{\lambda}) \,\d{\lambda} \geq 0.
		\end{align*}
		
		It remains to prove the second statement of this lemma. Let $\q_0,\q_1 \in Q_2$ be such that
		\begin{align}\label{eq:Q2StarIncreasing2}
			\int_0^1 \pp(u) (\q_1(u)-\q_0(u)) \,\d u \geq 0
			\quad \text{for all} \quad
			\pp\in Q_2,
		\end{align}
		and our goal is to show that \eqref{eq:Q2StarIncreasing2} implies $\Psi(\q_1) \geq \Psi(\q_0)$.
		By Lemma~\ref{lem:DensityQM} and adopting the similar argument in the first paragraph of this proof, there exists $M\in\N$ such that $q_0,q_1\in C_<^{(M)}$ and for all $i=0,1$, we have $\norm{\tilde \q_i - \q_i}_2 < \varepsilon$
		where $\tilde \q_i(u)=\sum_{k=0}^M\tilde q_{i,k}\1_{u\in[k/(M+1),(k+1)/(M+1))}$. Then, by Lipschitz continuity of $\Psi$ provided by Corollary~\ref{cor:convexLipschitz}, we have
		\begin{align*}
			\Psi(\q_1)-\Psi(\q_0)
			\geq -2\varepsilon + \Psi(\tilde\q_1)-\Psi(\tilde\q_0) = -2\varepsilon + \Psi(\tilde q_1) - \Psi(\tilde q_0).
		\end{align*}
		As in \eqref{eq:PsiCIncr20}, we can write 
		\begin{align*}
			\Psi(\tilde q_1) - \Psi(\tilde q_0) 
			= 
			\int_0^1 \partial_\lambda\Psi(\tilde q_{\lambda}) \,\d{\lambda}
		\end{align*}
		where $\tilde{q}_\lambda = (1-\lambda) \tilde{q}_0 + \lambda \tilde{q}_1$. Similar to \eqref{eq:PsiCIncr21}, we have
		\begin{align*}
			\partial_\lambda\Psi(\tilde q_{\lambda})
			=
			(\tilde{q}_1-\tilde{q}_0)\cdot \nabla\Psi(\tilde{q}_{\lambda})
			=
			(M+1)\int_0^1 (\tilde{\q}_1(u)-\tilde{\q}_0(u))\pp_\lambda(u) \,\d{u}
		\end{align*}
		where $\pp_\lambda(u) = \sum_{i=0}^M \partial_i\Psi(\tilde q_\lambda)\1_{u\in[k/(M+1),(k+1)/(M+1))}$. Note that $\q_\lambda\in Q_2$ and $|\partial_i\Psi(\tilde q_\lambda)| \leq \frac{1}{M + 1}$ for all $i$ (see \eqref{eq:partial_q_Psi}),   so 
		\begin{align*}
			\norm{\pp_\lambda}_2 \leq \frac{1}{M+1}.
		\end{align*}
		Therefore, by the Cauchy--Schwarz inequality and the assumptions on $\q_0,\q_1$, we have
		\begin{align*}
			&\int_0^1 (\tilde{\q}_1(u)-\tilde{\q}_0(u))\pp_\lambda(u) \,\d{u}
			\nonumber \\
			&=
			\int_0^1 (\tilde{\q}_1(u)-\q_0(u))\pp_\lambda(u) \,\d{u}
			\nonumber \\
			&+
			\int_0^1 (\q_1(u)-\q_0(u))\pp_\lambda(u) \,\d{u}
			+
			\int_0^1 (\q_1(u)-\tilde{\q}_0(u))\pp_\lambda(u) \,\d{u}
			\geq \frac{-2\varepsilon}{M+1}.
		\end{align*}
		In conclusion, we have
		\begin{align*}
			\Psi(\q_1)-\Psi(\q_0) \geq -4\varepsilon.
		\end{align*}
		Since $\varepsilon>0$ is arbitrary, the proof is completed.
	\end{proof}
        We now prove Lemma~\ref{lem:GradientPsiIncreasing}.
	\begin{proof}[Proof of Lemma~\ref{lem:GradientPsiIncreasing}]
		Let $M\in\N$ and $q=(q_0,\dots,q_M) \in C_<^{(M)}$. Defining 
        \[
        \q\coloneqq\sum_{j=0}^{M} q_j \1_{\left[\frac{j}{M+1}, \frac{j+1}{M+1}\right)}\in Q^{(M)}\subseteq Q_1,
        \] applying the formula for $\Psi(\q)$ provided by Theorem~\ref{thm:InitCond} yields
		\begin{align}
			\Psi(q)
			=
			\Psi(\q)
			&= 
			-\log 2 + \int_0^1 \brac{\q(u) - \sfrac{\log 2}{u^2}}_+ \d u
			\notag\\
			&=
			-\log 2 + \int_{x_*(\q)}^1 \q(u) - \sfrac{\log 2}{u^2} \,\d u
			=
			- \sfrac{\log 2}{x_*(\q)} + \int_{x_*(\q)}^1 \q(u) \,\d u,
		\end{align}
		where $x_*(\q) = \inf\gkl{x \in (0,1]\colon \q(x) > \sfrac{\log 2}{x^2}}$, setting $x_*(\q) = 1$ if $q_{M} \leq \log 2$. We compute the partial derivatives of $\Psi$ at $q=(q_0,\dots,q_{M})$:
		\begin{enumerate}
			\item If $x_*(\q) = 1$, then $\Psi(q_0,\dots,q_{M}) = - \log 2$, so 
			$\partial_{q_j}\Psi(q) = 0$ for all $j = 0,\dots, M$.
			\item If $x_*(\q) = \sfrac{k}{M+1}$ for some $k\in \gkl{1,\dots,M}$, then
			\begin{align*}
				\Psi(q)
				=-(\log 2)\sfrac{M+1}{k} + \sum_{j=k}^{M} \sfrac{q_j}{M+1}.
			\end{align*}
			By differentiating the formula above, we obtain 
			\begin{align*}
				\partial_{q_j}\Psi(q_0,\dots,q_{M}) =
				\begin{cases}
					0, & \text{ if } j\in \{0,\ldots,k-1\},\\
					\sfrac{1}{M+1}, & \text{ if } j\in \{k,\ldots,M\}.
				\end{cases}
			\end{align*}
			\item If $x_*(\q) \in \kl{\sfrac{k}{M+1},\sfrac{k+1}{M+1}}$ 
			for some $k\in \gkl{1,\dots,M}$, then
			$x_*(\q) =\sqrt{\frac{\log 2}{q_k}}$. This gives
			\begin{align}
				\Psi(q)
				&=
				- \sqrt{q_k \log 2} + \int_{\sqrt{\frac{\log 2}{q_k}}}^1 \q(u) \,\d u \notag\\
				&=
				- \sqrt{q_k \log 2} + q_k \kl{\sfrac{k+1}{M+1} - \sqrt{\sfrac{\log 2}{q_k}}} + \sum_{j=k+1}^{M}   \sfrac{q_j}{M+1}\notag\\
				&= - 2\sqrt{q_k \log 2} + q_k \sfrac{k+1}{M+1} + \sum_{j=k+1}^{M}   \sfrac{q_j}{M+1}.
			\end{align}
			Thus,
			\begin{align}\label{eq:partial_q_Psi}
				\partial_{q_j}\Psi(q) =
				\begin{cases}
					0, & \text{ if } j\in \{0,\ldots,k-1\},\\
					\sfrac{k+1}{M+1} - \sqrt{\sfrac{\log 2}{q_k}} = \sfrac{k+1}{M+1} - x_*(\q), & \text{ if } j= k,\\
					\sfrac{1}{M+1}, & \text{ if } j \in \{k+1,\ldots,M\}.
				\end{cases}
			\end{align}
		\end{enumerate}
		In particular, in each case, we have $\nabla \Psi(q) \in C_{\leq}^{(M)}$ as desired.
		%\begin{equation*}
		%%\label{eq:GradientPsiInCone}
		%    \nabla \Psi(q) \in C_{\leq}^{(M)}
		%\end{equation*}
		%as desired.
	\end{proof}
	
	Next, we state and prove a Gaussian comparison lemma that allows us to assume that $A$ is convex and Lipschitz in this section, which gives us the required regularity conditions to apply Theorem 1.2 in \cite{chen_hamilton-jacobi_2023}. Recall the definition of the enriched free energy in \eqref{eq:enrichedFE} for a generic covariance $A$: 
	\begin{align}\label{eq:FE_Cov_A}
		F_N\left(
		(H_N^{A}(\sigma))_{\sigma\in\{-1,1\}^N}
		\right)
		&= - \frac{1}{N} \E
		\log\kl{\textstyle\sum_{\alpha \in \N^M} v_\a \sum_{\sigma \in \{-1,1\}^N}   
			\exp\bkl{H_N^{A}(t,q,\s,\a)}
		}.
	\end{align}
	We use the notation $ F_N^{A} := F_N\left(
	(H_N^{A}(\sigma))_{\sigma\in\{-1,1\}^N}
	\right)$ to denote the free energy associated with the covariance function $A$.
	\begin{lemma}
		\label{lem:GP}
		Fix $M,N\in \N$. Let $A_1$ and $A_2$ be right-continuous and increasing functions such that $A_1(0)=A_2(0)=0$ and $A_1(1)=A_2(1)=1$.
		Suppose that $A_1(x)\leq A_2(x)$ for all $x\in [0,1]$.
		Then, for all $(t,q)\in \R_{\geq 0}\times C_\leq^{(M)}$, we have
		\begin{align*}
			F_N^{A_1}(t,q)\leq F_N^{A_2}(t,q).
		\end{align*}
	\end{lemma}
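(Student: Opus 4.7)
The plan is the standard Gaussian interpolation, i.e., a Kahane--Slepian type comparison. Introduce independent copies $(H_N^{A_1}(\sigma))_{\sigma\in\leaves{N}}$ and $(H_N^{A_2}(\sigma))_{\sigma\in\leaves{N}}$ of the two CREM Hamiltonians, taken jointly independent of $Y_{\q_M}$ and the Ruelle cascade weights $(v_\alpha)_{\alpha\in\N^M}$. For $s\in[0,1]$, set
$$\tilde H(s,\sigma) := \sqrt{s}\,H_N^{A_1}(\sigma) + \sqrt{1-s}\,H_N^{A_2}(\sigma),$$
which is a centered Gaussian process with covariance $N A_s\bkl{\sfrac{\sigma\wedge\tilde\sigma}{N}}$ for $A_s := sA_1 + (1-s)A_2$, and define
$$\phi(s) := -\tfrac{1}{N}\E\log\kl{\textstyle\sum_{\alpha\in\N^M} v_\alpha \sum_{\sigma\in\leaves{N}} \exp\bkl{\sqrt{2t}\,\tilde H(s,\sigma) - Nt + \sqrt{2}\,Y_{\q_M}(\sigma,\alpha) - Nq_M}}.$$
Then $\phi(0) = F_N^{A_2}(t,q)$ and $\phi(1) = F_N^{A_1}(t,q)$, so it suffices to show $\phi'(s)\leq 0$ for $s\in(0,1)$.

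I would compute $\phi'(s)$ by differentiating under the expectation and applying a Gaussian integration by parts, exactly as in the proof of Proposition~\ref{prop:PartialDeriv}. Since $\partial_s\tilde H(s,\sigma) = \sfrac{1}{2\sqrt{s}}H_N^{A_1}(\sigma) - \sfrac{1}{2\sqrt{1-s}}H_N^{A_2}(\sigma)$, and because the two Hamiltonians are mutually independent and independent of $Y_{\q_M}$, the IBP produces
$$\E\bga{H_N^{A_i}(\sigma)}_s = \sqrt{2t s_i}\,N \kl{1 - \E\bga{A_i\bkl{\sfrac{\sigma\wedge\tilde\sigma}{N}}}_{s,2}}, \qquad i=1,2,$$
with $s_1 = s$, $s_2 = 1-s$, where $\ga{\cdot}_{s,2}$ is the two-replica Gibbs average under the interpolated Hamiltonian. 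The factors $\sqrt{s_i}$ cancel the $1/\sqrt{s_i}$ coming from $\partial_s\tilde H$, the constant $t$-terms cancel between $i=1$ and $i=2$, and one is left with
$$\phi'(s) = -t\,\E\bga{A_2\bkl{\sfrac{\sigma\wedge\tilde\sigma}{N}} - A_1\bkl{\sfrac{\sigma\wedge\tilde\sigma}{N}}}_{s,2} \leq 0,$$
because $A_1\leq A_2$ on $[0,1]$ and $\sfrac{\sigma\wedge\tilde\sigma}{N}\in[0,1]$.

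Integrating then yields $F_N^{A_1}(t,q) = \phi(1) \leq \phi(0) = F_N^{A_2}(t,q)$. The only bookkeeping concerns the endpoints $s\in\{0,1\}$, where the formal expression for $\partial_s\tilde H$ is singular; this causes no real difficulty because $\phi$ is continuous on $[0,1]$ (the interpolated Hamiltonian is a finite-dimensional Gaussian depending continuously on $\sqrt{s}$ and $\sqrt{1-s}$), so one may apply the above inequality on $[\varepsilon,1-\varepsilon]$ and pass to the limit $\varepsilon\downarrow 0$. I do not expect any genuine obstacle: this is the textbook Kahane--Slepian argument, and the Gaussian integration by parts required is of exactly the same form as that already carried out in Proposition~\ref{prop:PartialDeriv}.
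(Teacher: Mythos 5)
Your proof is correct and uses the same underlying technique as the paper: Gaussian comparison in the sense of Kahane/Slepian. The paper's version is shorter because it fixes the randomness in $(v_\alpha)$ and $Y_{\q_M}$, checks that the mixed second partials $\partial^2\tilde F_N/\partial H_N^A(\sigma_1)\partial H_N^A(\sigma_2)$ are nonnegative for $\sigma_1\neq\sigma_2$, and then invokes the Kahane-type comparison lemma cited as Lemma~3.2 in \cite{BK2}, whereas you unroll that lemma by performing the $\sqrt{s}$-interpolation and the Gaussian integration by parts explicitly, arriving at the same sign condition on $A_2-A_1$.
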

	\begin{proof}
		Throughout the proof, we fix the randomness in $v_\alpha$ and $(Y_q(\sigma,\alpha) )$ and will prove a bound conditionally on these random objects. Let $\tilde F_N$ be the random free energy as defined in \eqref{eq:FE_Cov_A} without the expected value. 
		So if we differentiate $\tilde F_N$ with respect to $H_N^A(\sigma_1)$, we obtain
		\begin{align*}
			\pdv{\tilde F_N}{H_N^A(\sigma_1)}
			=
			-\frac{1}{N}
			\frac{
				\textstyle
				\sum_{\alpha \in \N^M} v_\a    
				\exp\bkl{H_N^{A}(t,q,\s_1,\a)}
			}{
				\textstyle\sum_{\alpha \in \N^M} v_\a \sum_{\sigma \in \{-1,1\}^N}   
				\exp\bkl{H_N^{A}(t,q,\s,\a)}
			}.
		\end{align*}
		Now, for $\s_2\neq \s_1$, we have
		\begin{align*}
			\pdv{\tilde F_N}{H_N^A(\sigma_1)}{H_N^A(\sigma_2)}
			&=
			\frac{1}{N}
			\frac{
				\textstyle
				\kl{
					\sum_{\alpha \in \N^M} v_\a    
					\exp\bkl{H_N^{A}(t,q,\s_1,\a)}
				}
				\kl{
					\sum_{\alpha \in \N^M} v_\a    
					\exp\bkl{H_N^{A}(t,q,\s_2,\a)}
				}
			}{
				\kl{
					\textstyle\sum_{\alpha \in \N^M} v_\a \sum_{\sigma \in \{-1,1\}^N}   
					\exp\bkl{H_N^{A}(t,q,\s,\a)}
				}^2}
			\nonumber \\
			&\geq 0.
		\end{align*}
		Then, by Lemma~3.2~in~\cite{BK2} and by the fact the covariance of $(H_N^{A_1}(\sigma))_{\sigma\in\{-1,1\}^N}$ is bounded from above by the one of $(H_N^{A_2}(\sigma))_{\sigma\in\{-1,1\}^N}$ as in the statement of the lemma, we obtain
		\begin{align*}
			&\cEX{
				\tilde{F}_N\left(
				(H_N^{A_1}(\sigma))_{\sigma\in\{-1,1\}^N}
				\right)
			}{ v_\alpha, (Y_q(\sigma,\alpha) )_{ \sigma,\alpha }  }
            \nonumber \\
			&\leq
			\cEX{
				\tilde{F}_N\left(
				(H_N^{A_2}(\sigma))_{\sigma\in\{-1,1\}^N}
				\right)
			}
			{v_\alpha, (Y_q(\sigma,\alpha) )_{ \sigma,\alpha }},
		\end{align*}
		which yields
		\begin{align*}
			F_N^{A_1}(t,q)\leq F_N^{A_2}(t,q),
		\end{align*}
		and the proof is completed.
	\end{proof}
	
	Let $A$ be a generic covariance function $A$ satisfying the assumption in Theorem~\ref{thm:main}. Then, $A$ is upper bounded by $\mathrm{id}(x)=x$. For the lower bound, we use the following lemma.
	\begin{lemma}
		\label{lem:A low bnd}
		Suppose that $A$ is non-negative and increasing and has a finite left-derivative at $1$.
		There exists $\tilde{A}$ that is increasing, convex and Lipschitz with $\tilde{A}(0)=0$ and $\tilde{A}(1)=A(1)$ such that $\tilde{A}(x)\leq A(x)$ for all $x\in [0,1]$.
	\end{lemma}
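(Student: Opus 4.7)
The plan is to construct $\tilde{A}$ explicitly as a single affine piece truncated below by zero; that is, of the form $\tilde{A}(x) = \max\{0, 1 - M(1-x)\}$ for a suitably large slope $M$. Such a function is automatically convex (a max of two affine functions), increasing, Lipschitz with constant $M$, and sends $0 \mapsto 0$ (provided $M \geq 1$) and $1 \mapsto 1 = A(1)$. The entire content of the lemma reduces to choosing $M$ large enough that the affine piece stays below $A$ on the region where it is positive.

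First, I would unpack the hypothesis. Let $c \coloneqq A'_-(1) < \infty$. By the definition of the left-derivative, there exists $\varepsilon \in (0,1)$ such that
\begin{equation*}
    \frac{A(1) - A(x)}{1-x} \leq c+1 \qquad \text{for all } x \in [1-\varepsilon, 1),
\end{equation*}
equivalently $A(x) \geq 1 - (c+1)(1-x)$ on $[1-\varepsilon, 1]$. Now I would set
\begin{equation*}
    M \coloneqq \max\{c+1,\, 1/\varepsilon,\, 1\}, \qquad \tilde{A}(x) \coloneqq \max\{0,\, 1 - M(1-x)\}.
\end{equation*}
The properties $\tilde{A}(0) = 0$, $\tilde{A}(1) = 1$, convexity, monotonicity, and $M$-Lipschitz continuity are immediate from the form of $\tilde{A}$.

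The only nontrivial point to verify is the domination $\tilde{A}(x) \leq A(x)$ for all $x \in [0,1]$. I would split into two cases. On $[0, 1 - 1/M]$ the affine piece is non-positive, so $\tilde{A}(x) = 0 \leq A(x)$ since $A$ is non-negative. On $[1-1/M, 1]$, the choice $M \geq 1/\varepsilon$ gives $1 - 1/M \geq 1 - \varepsilon$, so this interval is contained in $[1-\varepsilon, 1]$ where the bound from the previous paragraph applies. Combining that bound with $M \geq c+1$ yields
\begin{equation*}
    A(x) \geq 1 - (c+1)(1-x) \geq 1 - M(1-x) = \tilde{A}(x),
\end{equation*}
as desired. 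There is no real obstacle here beyond matching the two constraints on $M$, namely $M \geq c+1$ (to get an affine minorant near $1$) and $M \geq 1/\varepsilon$ (to ensure the kink of $\tilde A$ lies inside the region where that minorization is valid), both of which are satisfied by our choice.
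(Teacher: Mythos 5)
Your proof is correct and follows essentially the same strategy as the paper's: both construct $\tilde A$ as a truncated affine function supported near $1$, with a slope chosen large enough to stay below $A$. The only cosmetic difference is that you quantify the left-derivative hypothesis directly through a difference-quotient bound (using $c+1$ as the slope margin), whereas the paper phrases the same estimate via a Taylor expansion with little-$o$ remainder and takes $\tfrac{3}{2}A'(1)$ as the slope; the resulting piecewise construction and the two constraints on the slope (it must dominate the local difference quotient, and the kink must lie inside the interval where that bound holds) are identical.
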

	\begin{proof}
		By applying the Taylor expansion of $A$ at $1$, we have 
		\begin{align*}
			A(x) = A(1) + A'(1)(x-1) + \epsilon(x)(x-1), \quad x\uparrow 1
		\end{align*}
		where $A'(1)$ stands for the left-derivative of $A$ at $1$ and $\lim_{x\uparrow 1}\epsilon(x) = 0$. In particular, there exists $\delta \in (0 ,\frac{2}{3 A'(1)} )$ such that $|\epsilon(x)|< A'(1)/2$ for all $x\in (1-\delta,1]$. Thus, for $x\in (1-\delta,1]$, we have
            \begin{align*}
                A(x) \geq A(1) + \frac{3A'(1)}{2}(x-1) \geq A(1) + \frac{1}{\delta} (x-1).
            \end{align*}
        Therefore, we take 
        \[
        \tilde{A}(x)=(A(1) + \delta^{-1} (x-1))_+ = \begin{cases}
            0, & x \in [0, 1-\delta],\\
            A(1) + \frac{1}{\delta} (x-1),  & x \in (1-\delta, 1],
        \end{cases} 
        \]
        and the proof is completed.
	\end{proof}

        \begin{remark}
        The condition on the finite left-derivative in Lemma~\ref{lem:A low bnd} is where the corresponding hypothesis in Theorem~\ref{thm:main} is used. Steps towards removing this condition are explained in Section~\ref{sec:rem case}.
        \end{remark}
	Then, by Lemma~\ref{lem:GP} and Lemma~\ref{lem:A low bnd},
	for all $(t,q)\in \R_{\geq 0}\times C_\leq^{(M)}$, we have
	\begin{align}
		\liminf_{N\rightarrow\infty} F_N^{\tilde{A}}(t,q)\leq
		\limsup_{N\rightarrow\infty} F_N^{A}(t,q)\leq
		\limsup_{N\rightarrow\infty} F_N^{\mathrm{id}}(t,q). \label{eq:sandwich.0}
	\end{align}
	By \eqref{eq:sandwich.0}, it is sufficient to show that for $A$ Lipschitz and convex, for all $(t,q)\in\R_{\geq 0}\times C_\leq^{(M)}$, $\lim_{N\rightarrow\infty} F_N^A(t,q)$ exists and it is given by a formula that is independent of the choice of $A$. Thus, from now on, we assume that $A$ is Lipschitz and convex.

	We start with a proposition that states the limit infimum of $F_N^A(t,q)$ is the viscosity supersolution of a Hamilton--Jacobi equation. The proof of the proposition is postponed to Section~\ref{sec:supersolution}. 
	\begin{proposition}
		\label{prop:supersolution}
		Fix $M\in\N$. Define $\HHH_M^A:\R^{M+1}\rightarrow [0,\infty)$ by 
		\begin{align*}
			\HHH_M^A(q) 
			= 
			\inf
			\left\{
			\frac{1}{M+1}
			\sum_{m=0}^M \ar(p_m)
			\,:\,
			p=(p_0,\ldots,p_m)\in C_\leq^{(M)} \cap (q+ {C_\leq^{(M)}}^\ast)
			\right\},
		\end{align*}
		where 
		the extension $\ar\colon \R \to \R_{\geq 0}$ of $A$ is defined by
		\begin{align}
			x \mapsto \begin{cases}
				0, & \text{ if } x <0,\\
				A(x), & \text{ if } x \in [0,1],\\
				1 + |A|_{\mathrm{Lip}}(x-1), & \text{ if } x >0.\\
			\end{cases}
		\end{align}
		Then, for all $(t,q)\in \R_{\geq 0}\times C_\leq^{(M)}$, the limit infimum $\liminf_{N\rightarrow\infty}F_N^A(t,q)$ is a viscosity supersolution of the Cauchy problem of the Hamilton--Jacobi equation
		\begin{align*}
			\begin{cases}
				\partial_t f - \HHH_M^A(\nabla f) = 0, & \forall\,(t,q)\in\R_{+}\times C_\leq^{(M)}, \\
				f(0,q) = \Psi(q), & \forall\, q\in C_\leq^{(M)}.
			\end{cases}
		\end{align*}
		%where $\q_M = \sum_{m=0}^{M} q_m \1_{[m/(M+1), (m+1)/(M+1))}$.
	\end{proposition}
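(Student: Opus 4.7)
The plan is to establish a classical pointwise PDE inequality $\partial_t F_N^A \ge \HHH_M^A(\nabla F_N^A)$ for each $N$ directly from the derivative formulas in Proposition~\ref{prop:PartialDeriv}, and then to upgrade this to the viscosity supersolution property for $\liminf_N F_N^A$ via standard stability of viscosity supersolutions under locally uniformly Lipschitz approximations. The key ingredients are Proposition~\ref{prop:PartialDeriv} (derivatives in terms of overlap averages), Lemma~\ref{lem:increasing} (monotonicity of the components of $\nabla F_N$), Proposition~\ref{prop:FNC1} (continuous differentiability of $F_N$ in $q$), Proposition~\ref{prop:LipschitzFn} (uniform Lipschitz control), and the convexity of $A$ to which we have already reduced.

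For the pointwise inequality at $(t,q)\in\R_{>0}\times C_<^{(M)}$, I would introduce the conditional overlaps
\begin{align*}
r_k \coloneqq \frac{\E\bga{\sfrac{\s\wedge\tilde\s}{N}\,\1_{\a\wedge\tilde\a=k}}}{\E\bga{\1_{\a\wedge\tilde\a=k}}} = (M+1)\,\partial_{q_k}F_N(t,q) \in [0,1], \qquad k=0,\dots,M,
\end{align*}
using $\E\bga{\1_{\a\wedge\tilde\a=k}}=\zeta_{k+1}-\zeta_k=(M+1)^{-1}$ in the equidistant regime. The convex support inequality $A(x)\ge A(r_k)+A'(r_k^+)(x-r_k)$, tested against $\1_{\a\wedge\tilde\a=k}$ under $\E\bga{\cdot}$, kills its linear correction by the very definition of $r_k$, and summing over $k$ in $\partial_tF_N(t,q)=\sum_{k=0}^M\E\bga{A(\sfrac{\s\wedge\tilde\s}{N})\1_{\a\wedge\tilde\a=k}}$ gives $\partial_tF_N(t,q)\ge\frac{1}{M+1}\sum_{k=0}^M A(r_k)=\frac{1}{M+1}\sum_{k=0}^M\ar(r_k)$, the last equality since $r_k\in[0,1]$. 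The vector $r=(r_0,\dots,r_M)$ is then admissible in the infimum defining $\HHH_M^A(\nabla F_N(t,q))$: Lemma~\ref{lem:increasing} together with the equidistant spacing yields $0\le r_0\le\dots\le r_M$, so $r\in C_\leq^{(M)}$, and $r-\nabla F_N=M\nabla F_N$ has non-negative coordinates and therefore lies in ${C_\leq^{(M)}}^*$ (which contains the non-negative orthant). The inequality $\partial_tF_N-\HHH_M^A(\nabla F_N)\ge 0$ thus holds on $\R_{>0}\times C_<^{(M)}$, and extends to $\R_{>0}\times C_\leq^{(M)}$ by the continuous extension of $\nabla F_N$ to the closed cone from Step~3 of the proof of Proposition~\ref{prop:LipschitzFn}.

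Because $F_N$ is jointly $C^1$ on $\R_{>0}\times C_\leq^{(M)}$ (combining Propositions~\ref{prop:PartialDeriv} and~\ref{prop:FNC1}) and $\HHH_M^A$ is continuous, this classical inequality makes $F_N^A$ a viscosity supersolution of the PDE; the initial condition $F_N^A(0,\cdot)\to\Psi$ is given by Theorem~\ref{thm:InitCond}. The family $(F_N^A)_N$ is uniformly Lipschitz on bounded subsets (Proposition~\ref{prop:LipschitzFn} plus $|\partial_tF_N|\le 1$ from Proposition~\ref{prop:PartialDeriv}), so the Barles--Perthame half-relaxed-limit theorem yields that the lower relaxed limit of $(F_N^A)_N$ is a viscosity supersolution of the same Cauchy problem; uniform Lipschitzness also forces this relaxed limit to agree with the pointwise $\liminf_{N\to\infty}F_N^A$. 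The main obstacle I expect is the joint feasibility check $r\in C_\leq^{(M)}$ and $r-\nabla F_N\in{C_\leq^{(M)}}^*$: both rely crucially on Lemma~\ref{lem:increasing}, and it is precisely the equidistant spacing of the $\zeta_k$'s that converts the normalized monotonicity~\eqref{eq:dFinC} into the coordinate-wise ordering of $\nabla F_N$ needed here; a secondary point requiring care is the boundary behavior on $C_\leq^{(M)}\setminus C_<^{(M)}$, which is handled by the same continuous-extension argument used in the Lipschitz proof.
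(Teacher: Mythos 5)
Your overall strategy — establish the classical pointwise inequality $\partial_t F_N^A \geq \HHH_M^A(\nabla F_N^A)$ and then pass to the limit — is the same one the paper uses. In fact your derivation of the pointwise inequality is if anything \emph{cleaner} than the paper's: introducing the conditional overlaps $r_k=(M+1)\partial_{q_k}F_N$ and verifying directly that $r\in C_\leq^{(M)}\cap(\nabla F_N+{C_\leq^{(M)}}^*)$ via Lemma~\ref{lem:increasing} and the equidistant spacing makes the Jensen step transparent (the paper's display around~\eqref{eq:jensen} is written somewhat loosely). Your feasibility check for $r$ is correct, and so is the observation that the non-negative orthant sits inside ${C_\leq^{(M)}}^*$.

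The gap is in the passage from the classical inequality to the viscosity supersolution property, and in particular in what happens at local minima of $F_N-\phi$ that lie on the boundary $C_\leq^{(M)}\setminus C_<^{(M)}$. You write that this is ``handled by the same continuous-extension argument used in the Lipschitz proof,'' but that extension only says the formulas for $\nabla F_N$ and $\partial_t F_N$ are continuous up to the boundary; it does not address the test-function comparison. At a boundary local minimum $(t_\star,q_\star)$ of $F_N-\phi$ one does \emph{not} have $\nabla F_N(t_\star,q_\star)=\nabla\phi(t_\star,q_\star)$; the first-order optimality condition on the cone gives only $\nabla F_N-\nabla\phi\in{C_\leq^{(M)}}^*$ (this is precisely what (4.11) of Lemma~4.4 in \cite{Mourrat2023FreeEnergyUpperBound} records). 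Converting that into $\HHH_M^A(\nabla F_N)\geq\HHH_M^A(\nabla\phi)$ requires the ${C_\leq^{(M)}}^*$-monotonicity of $\HHH_M^A$ — which follows from its definition, since $a-b\in{C_\leq^{(M)}}^*$ implies $a+{C_\leq^{(M)}}^*\subseteq b+{C_\leq^{(M)}}^*$ — but it is not a consequence of continuity of $\HHH_M^A$. This monotonicity step is the other half of the paper's proof (its claim (ii), i.e.\ equation~\eqref{eq:Hamcomp}), and it must appear explicitly both for $F_N$ to be a viscosity supersolution on the closed cone and for the Barles--Perthame stability argument to go through when the approximate minimizers $(t_N,q_N)$ fall on the boundary. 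Once you insert that step, the rest of your argument (uniform Lipschitz bounds from Proposition~\ref{prop:LipschitzFn} and $|\partial_t F_N|\leq 1$, identification of the lower relaxed limit with the pointwise $\liminf$, initial condition from Theorem~\ref{thm:InitCond}) is sound and matches the paper.
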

	\begin{remark}
		Note that $\ar$ is a convex and Lipschitz function on $\R$ which has the same Lipschitz constant as $A$. 
		The choice of extension $\ar$ might seem to be arbitrary, but as the proof continues, it will be clear that the formula only depends on $x\in [0,1]$. 
	\end{remark}
	
	We state another proposition which provides an upper bound of the limiting free energy. The proof is postponed to Section~\ref{sec:FI subsol}.
	\begin{proposition}
		\label{prop:subsolution}
		For all $M\in\N$, the limit supremum $(t,q)\mapsto \limsup_{N\rightarrow\infty}F_N^{\mathrm{id}}(t,q)$ is a viscosity subsolution of the Hamilton--Jacobi equation
		\begin{align*}
			\begin{cases}
				\partial_t  f - \div f = 0, & \forall\, (t,q)\in\R_{+}\times C_\leq^{(M)}, \\
				f(0,q) = \Psi(q), & \forall\, q\in C_\leq^{(M)}.
			\end{cases}
		\end{align*}
	\end{proposition}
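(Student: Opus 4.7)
The plan is to first reduce, via a Gaussian absorption, the enriched free energy $F_N^{\mathrm{id}}(t,q)$ to an evaluation of the initial condition $F_N(0,\cdot)$ at a translated parameter, then pass to the limit $N\to\infty$ using Theorem~\ref{thm:InitCond}, and finally verify the viscosity subsolution property through a method-of-characteristics argument.

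Since $H_N^{\mathrm{id}}(\sigma)=z(\sigma)$ is a standard BRW independent of $Y_q$, the combined Gaussian process $\sqrt{2t}\,z(\sigma)+\sqrt{2}\,Y_q(\sigma,\alpha)$ has covariance $2(\sigma\wedge\tilde\sigma)(t+q_{\alpha\wedge\tilde\alpha})$. Setting $q+t\mathbf{1}\coloneqq(q_0+t,q_1+t,\ldots,q_M+t)\in C_\leq^{(M)}$, this is exactly the covariance of $\sqrt{2}\,Y_{q+t\mathbf{1}}(\sigma,\alpha)$; combined with the normalization $-Nt-Nq_M=-N(q+t\mathbf{1})_M$, the two Hamiltonians $H_N(t,q,\sigma,\alpha)$ and $H_N(0,q+t\mathbf{1},\sigma,\alpha)$ match in distribution, so
\[F_N^{\mathrm{id}}(t,q)=F_N(0,q+t\mathbf{1}),\qquad\forall\,(t,q)\in\R_{\geq 0}\times C_\leq^{(M)}.\]
By Theorem~\ref{thm:InitCond}, the right-hand side converges as $N\to\infty$ to $\Psi(q+t\mathbf{1})$, so the limsup is in fact a limit:
\[f(t,q)\coloneqq\limsup_{N\to\infty}F_N^{\mathrm{id}}(t,q)=\Psi(q+t\mathbf{1}).\]
Corollary~\ref{cor:convexLipschitz} implies that $f$ is Lipschitz on $\R_{\geq 0}\times C_\leq^{(M)}$, and $f(0,q)=\Psi(q)$ gives the correct initial condition.

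To verify the subsolution property, I would exploit that $f$ is constant along the characteristic direction $(1,-\mathbf{1})$: for any $(t_0,q_0)$ and small $s\in\R$ with $(t_0+s,q_0-s\mathbf{1})\in\R_{\geq 0}\times C_\leq^{(M)}$,
\[f(t_0+s,q_0-s\mathbf{1})=\Psi\bigl((q_0-s\mathbf{1})+(t_0+s)\mathbf{1}\bigr)=\Psi(q_0+t_0\mathbf{1})=f(t_0,q_0).\]
Let $\phi\in C^1$ and suppose $f-\phi$ has a local maximum at $(t_0,q_0)$ with $t_0>0$. Choosing $s<0$ small---so that $t_0+s>0$ and $q_0-s\mathbf{1}=q_0+|s|\mathbf{1}\in C_\leq^{(M)}$ automatically---the local maximum property yields $\phi(t_0+s,q_0-s\mathbf{1})\geq\phi(t_0,q_0)$. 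Dividing by $s<0$ and letting $s\uparrow 0$ gives
\[\partial_t\phi(t_0,q_0)-\sum_{m=0}^{M}\partial_{q_m}\phi(t_0,q_0)\leq 0,\]
which is exactly the subsolution inequality $(\partial_t\phi-\div\phi)(t_0,q_0)\leq 0$ once one adopts the natural convention $\div\phi=\int_0^1\nabla_\q\phi(u)\,\d u=\sum_m\partial_{q_m}\phi$ arising from the step-function identification of $C_\leq^{(M)}$ with elements of $L^2([0,1])$.

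The main obstacle is the bookkeeping that reconciles the finite-dimensional partials $(\partial_{q_m})$ with the $L^2$-Fr\'echet derivative $\nabla_\q$ used in the infinite-dimensional formulation of \cite{chen_hamilton-jacobi_2023}: the two differ by a factor $(M+1)$, exactly compensated by the interval length $\frac{1}{M+1}$ appearing in the Hamiltonian $\HHH_M^A$ of Proposition~\ref{prop:supersolution}. Beyond this normalization, no delicate PDE compactness or comparison argument is required: the exact identity above---available only because $A=\mathrm{id}$ allows the Gaussian field $z(\sigma)$ to be absorbed into the $Y_q$-process---produces the limit explicitly as a translate of $\Psi$, and the rest is a classical transport-equation computation.
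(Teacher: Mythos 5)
Your proof is correct, and it takes a genuinely different and more direct route than the paper. The paper's proof of Proposition~\ref{prop:subsolution} runs the standard viscosity-solution machinery: it passes to a subsequential limit $\tilde F^{\mathrm{id}}$, picks a test function $\phi$ at a local maximum $(t_\star,q_\star)$, constructs perturbed maxima $(t_N,q_N)\to(t_\star,q_\star)$ of $F_N^{\mathrm{id}}-\tilde\phi$, invokes the explicit derivative formulas of Proposition~\ref{prop:PartialDeriv} to obtain the exact finite-$N$ identity $\partial_t F_N^{\mathrm{id}} - \div F_N^{\mathrm{id}} = 0$, and then transfers to $\phi$ via the cone inequality from \cite[Lemma~4.4]{Mourrat2023FreeEnergyUpperBound}. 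You short-circuit all of this with the translation identity $F_N^{\mathrm{id}}(t,q)=F_N(0,q+t\mathbf{1})$, which is an exact distributional equality of Hamiltonians valid precisely because $A=\mathrm{id}$ lets the field $\sqrt{2t}\,z(\sigma)$ be absorbed into $Y_q$. Combined with Theorem~\ref{thm:InitCond}, this yields the closed form $\lim_N F_N^{\mathrm{id}}(t,q)=\Psi(q+t\mathbf{1})$, after which the subsolution inequality is a one-line characteristics computation. Your approach buys more: it shows the limit \emph{exists} (not merely a $\limsup$), identifies it explicitly, and in fact establishes that $\Psi(q+t\mathbf{1})$ is simultaneously a super- and subsolution, hence the unique viscosity solution of the transport problem — information the paper only obtains indirectly by combining both Propositions~\ref{prop:supersolution} and~\ref{prop:subsolution} with a comparison principle. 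The only caveats are minor: you should verify that $q+t\mathbf{1}$ still lies in the domain $Q^{(M)}$ where Theorem~\ref{thm:InitCond} applies (it does, since adding $t\geq 0$ to all coordinates preserves monotonicity and non-negativity with the convention $q_{-1}=0$ kept fixed), and your normalization remark about reconciling $\partial_{q_m}$ with the $L^2$ Fr\'echet derivative is correct but could be stated more precisely; the paper itself has indexing typos in its divergence formula (the sum should run over $m=0,\dots,M$ with no $1/M$ prefactor for the cancellation $\sum_m\1_{\a\wedge\tilde\a=m}=1$ to go through), so your convention $\div\phi=\sum_{m=0}^M\partial_{q_m}\phi$ is the right one.
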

	
	We proceed with the proof of Theorem~\ref{thm:main}. For $M\in\N$, by Theorem 1.2 in \cite{chen_hamilton-jacobi_2023}, the equations in Proposition~\ref{prop:supersolution} and Proposition~\ref{prop:subsolution} admit unique viscosity solutions, denoted by $f^A_M$ and $f^{\mathrm{id}}_M$, respectively. Moreover, by Lemma~\ref{lem:GP} and the comparison principle provided by (2) in Theorem 1.2 in \cite{chen_hamilton-jacobi_2023}, we have that for all $(t,q)\in\R_{+}\times C_\leq^{(M)}$,
	\begin{align}
		f_M^A(t,q)\leq \liminf_{N\rightarrow\infty} F_N^A(t,q)
		\leq \limsup_{N\rightarrow\infty} F_N^{\mathrm{id}}(t,q)
		\leq
		f_M^{\mathrm{id}}(t,q). \label{eq:sandwich.2}
	\end{align}
	Now, let $(t,\q)\in\R_{+}\times Q_2$. Let $q_M\in C_\leq^{(M)}$ be defined by
	\begin{align*}
		q_M^i = (M+1)\int_{i/(M+1)}^{(i+1)/M+1}\q(u)\,\d{u}
	\end{align*}
	for $i=0,\ldots,M+1$. Then by (3) of Theorem 4.7 in \cite{chenHamiltonJacobiEquations2025}, the limits of $f^A_M(t,q_M)$ and $f^{\mathrm{id}}_M(t,q_M)$ in the uniform topology 
	\begin{align*}
		f^A(t,\q) \coloneqq \lim_{M\rightarrow\infty} f^A_M(t,q_M)
		\quad \text{and}\quad 
		f^{\mathrm{id}}(t,\q) \coloneqq \lim_{M\rightarrow\infty} f^{\mathrm{id}}_M(t,q_M)
	\end{align*}
	are given by the Hopf formulae
	\begin{align}
		f^A(t,\q) 
		&= \sup_{\pp\in Q_\infty}\inf_{\yy\in Q_\infty}
		\left\{
		\Psi(\yy) + \int_0^1 \pp(u)(\q(u)-\yy(u))\,\d{u}
		+ t\int_0^1 A_{\mathrm{ext}}(\pp(u))\,\d{u}
		\right\},
		\label{eq:Hopf.A} \\
		f^{\mathrm{id}}(t,\q) 
		&= 
		\sup_{\pp\in Q_\infty}\inf_{\yy\in Q_\infty}
		\left\{
		\Psi(\yy) + \int_0^1 \pp(u)(\q(u)-\yy(u))\,\d{u}
		+ t\int_0^1 \pp(u)\,\d{u}
		\right\}.
		\label{eq:Hopf.id}
	\end{align}
	We argue in the rest of the proof that both \eqref{eq:Hopf.A} and \eqref{eq:Hopf.id} are equal to Formula~\ref{eq:main}. Note that by \eqref{eq:sandwich.2}, this also implies the limit of $F_N(t,\q)$ exists and equals Formula~\ref{eq:main}, which completes the proof.
	
	Recall the convex dual $\Psi^*$ of $\Psi$ is defined by
	\begin{align}\label{eq:aslkfjeCD}
		\Psi^*(\pp) = \sup_{\q\in Q_2}\kl{\int_0^1 \pp(u) \q (u)\,\d u - \Psi(\q)},
	\end{align}
	for all $\pp\in Q_2$. The formula for $\Psi$ in Theorem~\ref{thm:InitCond} also allows us to explicitly compute $\Psi^*$.
	\begin{proposition}\label{prop:HJCremPsiCOnvexDual}
		For $\Psi\colon Q_1 \to \R$ as in Theorem~\ref{thm:InitCond} and $\pp \in Q_2$,
		\begin{align}\label{eq:HJCremPsiConvexDual2}
			\Psi^*(\pp) = 
			\begin{cases}
				\infty, & \text{ if } \norm{\pp}_1 \geq 1 \
                \text{ or } \norm{\pp}_\infty > 1 \\%\text{ or there exists } u \in [0,1) \text{ with } \pp(u) > 1,\\
				\frac{\log 2}{1-\norm{\pp}_1}, & \text{ otherwise.}
			\end{cases}
		\end{align}
	\end{proposition}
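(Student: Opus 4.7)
The plan is to compute $\Psi^*(\pp)$ directly by optimizing inside \eqref{eq:aslkfjeCD} using the explicit formula for $\Psi$ from Theorem~\ref{thm:InitCond}, splitting into the three regimes appearing in the statement. For each $\q \in Q_2$, introduce the threshold $\zeta(\q) \coloneqq \inf\{u \in (0,1] : \q(u) \geq \log 2/u^2\}$, with the convention $\zeta = 1$ if the set is empty. Since the integrand in Theorem~\ref{thm:InitCond} vanishes precisely on $[0, \zeta)$, a direct integration gives $\Psi(\q) = \int_\zeta^1 \q(u)\,du - \log 2/\zeta$. Monotonicity and right-continuity of $\q$ combined with the continuity of $u \mapsto \log 2/u^2$ then yield the two-sided pointwise bounds $\q(u) \leq \log 2/\zeta^2$ for $u < \zeta$ and $\q(u) \geq \log 2/\zeta^2$ for $u \geq \zeta$, which will drive the subsequent estimates.

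I would then dispose of the two divergent cases by exhibiting explicit test sequences. If $\norm{\pp}_\infty > 1$, then since $\pp$ is increasing there exist $a < 1$ and $\delta > 0$ with $\pp \geq 1 + \delta$ on $[a, 1)$; taking $\q_K = K\,\1_{[a,1)}$ and sending $K \to \infty$ gives $\int \pp \q_K\, du - \Psi(\q_K) = K \int_a^1 (\pp - 1)\, du + \log 2/a \geq K\delta(1-a) + \log 2/a \to \infty$. If instead $\norm{\pp}_\infty \leq 1$ but $\norm{\pp}_1 \geq 1$, take constant $\q \equiv c$ with $\zeta_c = \sqrt{\log 2/c}$: a short computation yields $\int \pp \q\, du - \Psi(\q) = c(\norm{\pp}_1 - 1 + \zeta_c) + \log 2/\zeta_c$, which diverges to $+\infty$ as $c \to \infty$ either from $c(\norm{\pp}_1 - 1)$ (if $\norm{\pp}_1 > 1$) or from $c\zeta_c = \sqrt{c \log 2}$ (at the boundary $\norm{\pp}_1 = 1$).

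For the finite regime $\norm{\pp}_\infty \leq 1$ and $\norm{\pp}_1 < 1$, I substitute the expression for $\Psi(\q)$ into \eqref{eq:aslkfjeCD}, obtaining the decomposition
\begin{equation*}
    \int_0^1 \pp \q \, du - \Psi(\q) = \frac{\log 2}{\zeta} + \int_0^\zeta \pp \q \, du + \int_\zeta^1 (\pp - 1)\q \, du.
\end{equation*}
Since $\pp \geq 0$ and $\pp - 1 \leq 0$ almost everywhere, applying the two-sided bound on $\q$ in each integral collapses everything to $\int \pp \q\, du - \Psi(\q) \leq \log 2 \cdot (2\zeta + \norm{\pp}_1 - 1)/\zeta^2$, a purely one-dimensional expression in $\zeta$. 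Elementary calculus locates the maximizer at $\zeta^* = 1 - \norm{\pp}_1 \in (0, 1]$ with value $\log 2/(1 - \norm{\pp}_1)$. The matching lower bound is attained by the constant test function $\q \equiv \log 2/(1-\norm{\pp}_1)^2 \in Q_2$, whose threshold is precisely $\zeta^*$, so equality holds throughout the chain of estimates.

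The main technical subtlety is justifying the bound $\q(u) \leq \log 2/\zeta^2$ for $u < \zeta$ for arbitrary $\q \in Q_2$, since $\q$ may have a jump at $\zeta$ and in particular $\q(\zeta)$ itself could exceed $\log 2/\zeta^2$. This is resolved by taking the left limit at $\zeta$ in the inequality $\q(u) < \log 2/u^2$ (valid for $u < \zeta$ by definition of the infimum) and invoking right-continuity of $\q$ together with continuity of $u \mapsto \log 2/u^2$ at $\zeta > 0$, to conclude $\q(\zeta^-) \leq \log 2/\zeta^2$ and hence the bound for all $u < \zeta$. With this observation in hand, the remainder of the proof is a one-variable calculus exercise.
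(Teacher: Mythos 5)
Your proof is correct and follows essentially the same route as the paper: the same threshold decomposition of $\Psi(\q)$, the same step and constant test functions for the two divergent cases, the same pointwise bounds on $\q$ across the threshold $\zeta$, and the same reduction of the upper bound to a one-variable optimization in $\zeta$ whose maximizer is matched by the constant path $\q \equiv \log 2/(1-\norm{\pp}_1)^2$. The only differences are cosmetic (you give a unified bound that absorbs the cases $\zeta=1$ and $\norm{\pp}_1=0$ which the paper treats separately, and you do the upper bound before the lower).
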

	The proof of Proposition~\ref{prop:HJCremPsiCOnvexDual} is deferred to Section~\ref{sec:proof of Prop:HJConvexDual}.	By Proposition~\ref{prop:HJCremPsiCOnvexDual}, the Hopf formula in \eqref{eq:Hopf.A} can be rewritten as
	\begin{align}
		f^A(t,\q) 
		&= \sup_{\substack{\pp\in Q_\infty \\ \norm{\pp}_\infty\leq 1,\norm{\pp}_1<1}} 
		\left\{
		\int_0^1 \pp(u)\q(u)\,\d{u}
		+ t\int_0^1 A_{\mathrm{ext}}(\pp(u))\,\d{u}
		-
		\frac{\log 2}{1-\norm{\pp}_1}
		\right\}
		\nonumber \\
		&=
		\sup_{\substack{\pp\in Q_\infty \\ \norm{\pp}_\infty\leq 1,\norm{\pp}_1<1}} 
		\left\{
		\int_0^1 \pp(u)\q(u)\,\d{u}
		+ t\int_0^1 A(\pp(u))\,\d{u}
		-
		\frac{\log 2}{1-\norm{\pp}_1}
		\right\}, \label{eq:HJ.A.2}
	\end{align}
	which in particular, shows that the formula does not depend on the choice of $A_{\mathrm{ext}}$.
	Similarly, \eqref{eq:Hopf.id} can be rewritten as 
	\begin{align}
		f^{\mathrm{id}}(t,\q)
		=
		\sup_{\substack{\pp\in Q_\infty \\ \norm{\pp}_\infty\leq 1,\norm{\pp}_1}<1} 
		\left\{
		\int_0^1 \pp(u)\q(u)\,\d{u}
		+ t\int_0^1 \pp(u)\,\d{u}
		-
		\frac{\log 2}{1-\norm{\pp}_1}
		\right\}. \label{eq:HJ.id.2}
	\end{align}
	We prove that both \eqref{eq:HJ.A.2} and \eqref{eq:HJ.id.2} equal \eqref{eq:main}. 
	We start with the lower bound for \eqref{eq:HJ.A.2}. Let $\lambda \in [0,1)$.
	We set $\pp^{(\lambda)} = \1_{[1-\lambda,1)}$. In the case $\lambda=0$, this is to be understood as $\pp^{(0)}\equiv 0$. 
	Since $\pp^{(\lambda)} \in Q_2$ with $\norm{\pp}_\infty = 1$ and $\norm{\pp}_1=\lambda$, we get
	\begin{align}\label{eq:compareViscosFE1}
		\sup_{\substack{\pp \in Q_\infty,\\\norm{\pp}_\infty\leq 1, \norm{\pp}_1 = \lambda}} \kl{t \int_0^1 A(\pp(u)) \,\d u}
		\geq t \int_0^1 A(\pp^{(\lambda)}(u)) \,\d u = \lambda t,
	\end{align}
	using that $A(0)=0$ and $A(1) = 1$. Therefore, $f^A(t,\q)$ admits the lower bound
	\begin{align*}
		f^A(t,\q)
		\geq 
		\sup_{\lambda\in [0,1)}
		\left\{
		\lambda t - \int_{1-\lambda}^1 \q(u) \d u - \frac{\log 2}{1-\lambda}
		\right\}.
	\end{align*}
	It remains to prove an upper bound for \eqref{eq:HJ.id.2}. Note that for any $\lambda \in [0,1]$, $\pp \in Q_\infty$ such that $\norm{\pp}_\infty \leq 1$ and $\norm{\pp}_1 = \lambda$ and $\q\equiv q$ constant, we have
	\begin{align}
		\int_0^{1-\lambda} \pp(u)q\d{u}
		=
		\int_{1-\lambda}^1 (1-\pp(u))q\d{u}. \label{eq:constant q}
	\end{align}
	Now, for any $\q$ non-negative and increasing, we have
	\begin{align}
		\int_0^{1-\lambda} \pp(u)\q(u)\d{u}
		\leq 
		\int_0^{1-\lambda}\pp(u)\q(1-\lambda)\d{u}
		\label{eq:general q.1}
	\end{align}
	and 
	\begin{align}
		\int_{1-\lambda}^1 (1- \pp(u))\q(u)\d{u}
		\geq 
		\int_{1-\lambda}^1(1-\pp(u))\q(1-\lambda)\d{u}.
		\label{eq:general q.2}
	\end{align}
	Therefore, combining \eqref{eq:constant q}, \eqref{eq:general q.1} and \eqref{eq:general q.2} yields
	\begin{align*}
		\int_0^{1-\lambda} \pp(u)\q(u)\d{u}
		\leq 
		\int_{1-\lambda}^1 (1- \pp(u))\q(u)\d{u},
	\end{align*}
	which is equivalent to 
	\begin{align}
		\int_0^1 \pp(u)\q(u)\d{u}
		\leq
		\int_{1-\lambda}^1 \q(u)\d{u}.
	\end{align}
	Therefore, $f^I(t,\q)$ admits the upper bound
	\begin{align*}
		f^I(t,\q)
		\leq 
		\sup_{\lambda\in [0,1)}
		\left\{
		\lambda t - \int_{1-\lambda}^1 \q(u) \d u - \frac{\log 2}{1-\lambda}
		\right\},
	\end{align*}
	which also yields that $f^A(t,\q)=f^{\mathrm{id}}(t,\q)$ and they both equal to Formula \eqref{eq:main}.
	
	\subsection{Proof of Proposition~\ref{prop:supersolution}} \label{sec:supersolution}
	
	We adopt a similar argument as in \cite{Mourrat2023FreeEnergyUpperBound}. Let $(t,q)\mapsto \tilde F(t,q)$ be a subsequential limit of $F_N$. For simplicity, we omit specifying the subsequence along which the convergence of $(t, q) \mapsto F_N(t, q)$ to $\tilde F$ holds.
	
	Let $\phi(t,q)\in C^\infty((0,\infty)\times C_\leq^{(M)})$ be a smooth test function such that $\tilde F - \phi$ has a local minimum at $(t_\star,q_\star)\in (0,\infty)\times C_\leq^{(M)}$. Our goal is to show that 
	\begin{align}
		(\partial_t \phi - \HHH_M^A(\nabla\phi))(t_\star,q_\star) \geq 0.
		\label{eq:sup sol}
	\end{align}
	Define for every $(t,q)\in (0,\infty)\times C_\leq^{(M)}$, $\tilde \phi(t,q) = \phi(t,q) - (t-t_\star)^2 - \abs{q-q_\star}^2$. 
	We can find $(t_N, q_N) \in (0, \infty) \times C_\leq^{(M)}$ such that, for $N$ sufficiently large, it is a local minimum of $F_N - \tilde\phi$, and
	\[
	\lim_{N \to \infty} (t_N, q_N) = (t_\star, q_\star).
	\]

	To prove \eqref{eq:sup sol}, we claim that for all $(t,q)\in (0,\infty)\times C_\leq^{(M)}$, we have 
	\begin{align}
		(\partial_t F_N - \HHH_M^A(\nabla F_N))(t,q) \geq 0. \label{eq:F_N sup sol}
	\end{align}
	Also, we claim that
	\begin{align}
		\HHH_M^A(\nabla F_N)(t_N,q_N) \geq \HHH_M^A(\nabla\tilde \phi)(t_N,q_N). \label{eq:Hamcomp}
	\end{align}
	If \eqref{eq:F_N sup sol} and \eqref{eq:Hamcomp} both hold, then by the fact that $\partial_t F_N(t_N,q_N) = \partial_t \phi(t_N,q_N)$ (as $t_N$ is a critical point and an interior point) and the fact that $\tilde\phi$ is smooth and $\HHH_M^A$ is continuous, we have
	\begin{align*}
		(\partial_t \phi - \HHH_M^A(\nabla \phi))(t_\star,q_\star)
		&=
		\liminf_{N\rightarrow\infty}
		(\partial_t \tilde \phi - \HHH_M^A(\nabla \tilde \phi))(t_N,q_N)
		\nonumber \\
		&\geq 
		\liminf_{N\rightarrow\infty}
		(\partial_t F_N - \HHH_M^A(\nabla F_N))(t_N,q_N)
		\geq 0.
	\end{align*}
	We start with proving \eqref{eq:F_N sup sol}. By Proposition~\ref{prop:PartialDeriv}, we have
	\begin{align}
		&(\partial_t F_N - \HHH_M^A(\nabla F_N))(t,q)
		\nonumber \\
		&=
		\E\bbga{A\kl{\sfrac{\s \wedge \tilde\s}{N}}}_{t,\q,2}
		-
		\sum_{m=1}^M
		\frac{1}{M}
		A
		\left(
		\E\bbga{ \1_{\a\wedge \tilde\a = k}(\a,\tilde\a)\, \sfrac{\s \wedge \tilde\s}{N}}_{t,\q,2}
		\right)
		\nonumber \\
		&\geq 
		\E\bbga{A\kl{\sfrac{\s \wedge \tilde\s}{N}}}_{t,\q,2}
		-
		\E\bbga{A\kl{\sfrac{\s \wedge \tilde\s}{N}}}_{t,\q,2} 
		\label{eq:jensen}
		\nonumber \\
		&=0,
	\end{align}
	where we apply Jensen's inequality and the convexity of $A$ to obtain \eqref{eq:jensen}. 
	
	It remains to show \eqref{eq:Hamcomp}, where we follow the same argument as in (5.16) in \cite{Mourrat2023FreeEnergyUpperBound}. By Lemma~\ref{lem:increasing}, we know that $\nabla \tilde{F}_N\in C_\leq^{(M)}$. Thus, by the definition of $\HHH_M^A$, it suffices to show that
	\begin{align}
		\nabla F_N - \nabla\tilde \phi \in  {C_\leq^{(M)}}^*. \label{eq:dual_cone_A}
	\end{align}
	Since $(t_N,q_N)$ is a local minimum of $F_N - \phi$, we have for every $q'\in C_\leq^{(M)}$ that 
	\begin{align}
		(q' - q_N)\cdot (\nabla F_N - \nabla\tilde \phi)(t_N,q_N) \geq 0. \label{eq:grad_inner_prod}
	\end{align}
	Since $C_\leq^{(M)}$ is a convex cone, we can substitute $q' - q_N$ by $q'$ in \eqref{eq:grad_inner_prod}.
	By (4.11) of Lemma~4.4 in \cite{Mourrat2023FreeEnergyUpperBound}, we conclude that \eqref{eq:dual_cone_A} is true.
	
	\subsection{Proof of Proposition~\ref{prop:subsolution}}
	\label{sec:FI subsol}
	
	Let $(t,q)\mapsto \tilde F^{\mathrm{id}}(t,q)$ be a subsequential limit of $F_N^{\mathrm{id}}$. For simplicity, we omit specifying the subsequence along which the convergence of $(t, q) \mapsto F_N^{\mathrm{id}}(t, q)$ to $\tilde F^\mathrm{id}$ holds.
	
	Let $(t_\star,\q_\star)\in (0,\infty)\times Q^{(M)}$ and $\phi\in C^\infty((0,\infty)\times C_\leq^{(M)})$ be such that $\tilde F^{\mathrm{id}} - \phi$ has a local maximum at $(t_\star,q_\star)$. We claim that
	\begin{align*}
		(\partial_t\phi - \div\phi )(t_\star,q_\star)
		\leq
		0,
	\end{align*}
	where $\div\phi$ is the divergence of $\phi$.
	Define for every $(t,q)\in (0,\infty)\times C_\leq^{(M)}$, $\tilde \phi(t,q) = \phi(t,q) + (t-t_\star)^2 + \abs{q-q_\star}^2$. 
	We can find $(t_N, q_N) \in (0, \infty) \times C_\leq^{(M)}$ such that, for $N$ sufficiently large, it is a local maximum of $F_N^{\mathrm{id}} - \tilde\phi$, and
	\[
	\lim_{N \to \infty} (t_N, q_N) = (t_\star, q_\star).
	\]
	By Proposition~\ref{prop:PartialDeriv}, we have that for all $(t,q)\in  (0,\infty)\times Q^{(M)}$,
	\begin{align}
		&(\partial_t F_N^{\mathrm{id}} - \div F_N^\mathrm{id})(t,q)
		\nonumber \\
		&=
		\E\bbga{\sfrac{\s \wedge \tilde\s}{N}}_{t,\q,2}
		-
		\sum_{m=1}^M
		\frac{1}{M}
		\E\bbga{ \1_{\a\wedge \tilde\a = k}(\a,\tilde\a)\, \sfrac{\s \wedge \tilde\s}{N}}_{t,\q,2}
		\nonumber \\
		&=
		\E\bbga{\sfrac{\s \wedge \tilde\s}{N}}_{t,\q,2}
		-
		\E\bbga{\sfrac{\s \wedge \tilde\s}{N}}_{t,\q,2} 
		\nonumber \\
		&=0.
	\end{align}
	On the other hand, since $(t_N,q_N)$ is a local maximum of $F_N^{\mathrm{id}} - \tilde \phi$, we have for every $q'\in C_\leq^{(M)}$,
	\begin{align}
		(q' - q_N)\cdot (\nabla F_N^{\mathrm{id}} - \nabla\tilde \phi)(t_N,q_N) \leq 0. \label{eq:grad_inner_prod_I}
	\end{align}
	Since $C_\leq^{(M)}$ is a convex cone, we can substitute $q' - q_\star$ by $q'$ in \eqref{eq:grad_inner_prod}, so by (4.11) of Lemma 4.4 in \cite{Mourrat2023FreeEnergyUpperBound}, 
	\begin{align}
		\nabla\tilde \phi - \nabla F_N \in  {C_\leq^{(M)}}^*. \label{eq:dual_cone}
	\end{align}
	In particular, by (4.12) of Lemma 4.4 in \cite{Mourrat2023FreeEnergyUpperBound}, we have
	\begin{align}
		(\div \tilde\phi
		-
		\div F_N^{\mathrm{id}}
		)(t_N,q_N)
		=
		\sum_{m=1}^M \partial_{q_m} \tilde \phi(t_N,q_N) - \partial_{q_m}F_N(t_N,q_N) \geq 0.
	\end{align}
	Thus, as $\partial_t F_N^{\mathrm{id}}(t_N,q_N) = \partial_t \tilde \phi(t_N,q_N)$ and $\tilde\phi$ is smooth, we conclude that
	\begin{align*}
		(\partial_t\phi - \div \phi)(t_\star,q_\star)
		=
		\limsup_{N\rightarrow\infty}
		(\partial_t \tilde\phi - \div \tilde\phi)(t_N,q_N)
		\leq 
		\limsup_{N\rightarrow\infty}
		(\partial_t F_N^{\mathrm{id}} - \div F_N^{\mathrm{id}})(t_N,q_N)
		=0,
	\end{align*}
	and the proof is completed.
	
	\subsection{Proof of Proposition~\ref{prop:HJCremPsiCOnvexDual}}
	\label{sec:proof of Prop:HJConvexDual}
	Let $\pp\in Q_2$.
	By Theorem~\ref{thm:InitCond} and the definition of convex dual in \eqref{eq:aslkfjeCD},
	\begin{align}
		\Psi^*(\pp) 
		=  \log 2 + \sup_{\yy\in Q_2} h_\pp(\yy),
		\label{eq:Psi*}
	\end{align}
	where
	\begin{align}\label{eq:Psi*-1}
		h_{\pp}(\yy) \coloneqq \int_0^1 \pp(u) \yy(u)\, \d u  - \int_0^1 \brac{\yy(u) - \sfrac{\log 2}{u^2}}_+ \d u.
	\end{align}
	If $\norm{\pp}_\infty > 1$, then there exists $u_1 = u_1(\pp) \in [0,1)$ such that $\pp(u) > 1$ for all $u \geq u_1$ since $\pp$ is increasing. We set $\tilde\yy = y\1_{[u_1,1)}$ with $y\geq \frac{\log 2}{u_1^2}$. We have 
	\begin{align}\label{eq:Psi*0}
		\Psi^*(\pp)
		&= \log 2 + \sup_{\yy\in Q_2} h_\pp(\yy)\notag\\
		&\geq \log 2 +  h_\pp(\tilde\yy)\notag\\
		&=
		\int_{u_1}^1 \pp(u)y \,\d u 
		- \int_{u_1}^1  y - \sfrac{\log 2}{u^2}\, \d u + \log 2 \notag \\
		&\geq y\int_{u_1}^1(\pp(u) - 1) \,\d u.
	\end{align}
	Since $\pp$ is increasing, $\pp(u) > 1$  for all $u\in [u_1,1)$, so the integral $\int_{u_1}^1(\pp(u) - 1) \,\d u$ in the last line of \eqref{eq:Psi*0} is strictly positive. Thus, the last line of \eqref{eq:Psi*0} can be arbitrarily large as $y\rightarrow\infty$.
	
	Note that for $\yy\in Q_2$, since $\yy$ is increasing,
	\begin{align}
		\int_0^1 \brac{\yy(u) - \sfrac{\log 2}{u^2}}_+ \d u = \int_{u_*(\yy)}^1  \yy(u) - \sfrac{\log 2}{u^2} \,\d u = \log 2 - \sfrac{\log 2}{u_*(y)}   + \int_{u_*(\yy)}^1  \yy(u) \,\d u,
	\end{align}
	where \begin{align}\label{eq:Psi*0.75}
		u_*(\yy) 
		= \sup\gkl{u\in (0,1)\colon \yy(u) < \sfrac{\log 2}{u^2}},
	\end{align}
	so
	\begin{align}\label{eq:Psi*0.5}
		h_\pp(\yy) = \int_0^1 \pp(u) \yy(u)\, \d u  - \int_{u_*(\yy)}^1 \yy(u)\, \d u  + \sfrac{\log 2}{u_*(\yy)} - \log 2.
	\end{align}
	
	If $\norm{\pp}_1\geq 1$, we set $\tilde\yy \equiv y$, where $y\geq \log 2$. Then $u_*(\tilde\yy) = \sqrt{\frac{\log 2}{y}}$. 
	Thus, by \eqref{eq:Psi*} and \eqref{eq:Psi*0.5},
	\begin{align}
		\Psi^*(\pp)
		= \log 2 + \sup_{\yy\in Q_2} h_\pp(\yy)
		&\geq \log 2 +  h_\pp(\tilde\yy)\notag\\
		%&= \int_0^1 \pp(u) y \,\d u  - \int_0^1 \brac{y - \sfrac{\log 2}{u^2}}_+ \d u  + \log 2 \notag\\
		&= \int_0^1 \pp(u) y \,\d u  - \int_{u_*(\tilde\yy)}^1 y \, \d u  + \sfrac{\log 2}{u_*(\tilde\yy)} \notag\\
		%&= y \,\norm{\pp}_1  - \kl{y-\sqrt{y\log 2} + \log 2 - \sqrt{y\log 2}} + \log 2 \notag\\
		&=  y \,(\norm{\pp}_1 - 1) + 2\sqrt{y\log 2}.
	\end{align}
	Since $\norm{\pp}_1\geq 1$, the lower bound above can be arbitrarily large as $y\rightarrow\infty$.

	Finally, suppose that both $\norm{\pp}_1<1$ and $\pp\leq 1$ on $[0,1)$. We first establish the lower bound by choosing the constant path $\tilde\yy \equiv \sfrac{\log 2}{(1-\norm{\pp}_1)^2}$. Then,
	\begin{align}\label{eq:Psi*2}
		u_*(\tilde\yy) 
		%= \sup\left\{u\in [0,1): \tilde\yy(u) < \sfrac{\log 2}{u^2}\right\} 
		= \sfrac{\sqrt{\log 2}}{\sqrt{\sfrac{\log 2}{(1-\norm{\pp}_1)^2}}}
		= 1-\norm{\pp}_1.
	\end{align}
	By \eqref{eq:Psi*}, \eqref{eq:Psi*0.5} and \eqref{eq:Psi*2},
	\begin{align}\label{eq:Psi*1}
		\Psi^*(\pp)
		\geq 
		h_\pp(\tilde \yy) &=
		\sfrac{\log 2}{(1-\norm{\pp}_1)^2} \int_0^1 \pp(u)\, \d u
		-
		\int_{u_*(\tilde\yy)}^1 \sfrac{\log 2}{(1-\norm{\pp}_1)^2} \, \d u + 
		\sfrac{\log 2}{1-\norm{\pp}_1}.
	\end{align}
	It remains to show the matching upper bound.
	If $\norm{\pp}_1=0$, then the monotonicity of $\pp$ implies that $\pp\equiv 0$. In this case, obviously, $\Psi^*(\pp)= \log 2 = \frac{\log 2}{1-\norm{\pp}_1}$. From now on, we assume $\norm{\pp}_1 \in (0,1)$.
	
	Note that 
	\begin{align}\label{eq:Psi*3}
		\sup_{y\in Q_2} h_{\pp}(\yy) 
		= 
		\max
		\gkl{
			\sup_{\yy\in Q_2,u_*(\yy) = 1} h_{\pp}(\yy),
			\sup_{\yy\in Q_2,u_*(\yy) < 1} h_{\pp}(\yy)
		},
	\end{align}
	where $u_*(\yy)$ is defined in \eqref{eq:Psi*0.75}. By this definition, for each $\yy\in Q_2$ with $u_*(\yy)=1$, we have $\yy(u) \leq \log 2$ for all $u\in [0,1)$.
	This and \eqref{eq:Psi*0.5} imply
	\begin{align}\label{eq:Psi*3.4}
		h_{\pp}(\yy) = \int_0^1 \yy(u) \pp(u)\, \d u 
		\leq \norm{\pp}_1\log 2 .
	\end{align}
	Thus,
	\begin{align}\label{eq:Psi*4}
		\sup_{\yy\in Q_2,u_*(\yy) = 1} h_p(\yy) \leq  \norm{\pp}_1 \log 2.
	\end{align}
	On the other hand, for $\yy\in Q_2$ with $u_*(\yy)<1$, we rewrite  \eqref{eq:Psi*0.5} as
	\begin{align}\label{eq:Psi*4.3}
		h_{\pp}(\yy)
		&=
		\int_0^1 \yy(u)\pp(u)\, \d u
		-
		\int_{u_*(\yy)}^1 \yy(u)\, \d u - \log 2 + \sfrac{\log 2}{u_*(\yy)} \\
		&=
		\int_0^{u_*(\yy)} \yy(u) \pp(u)\, \d u
		+ 
		\int_{u_*(\yy)}^1 \yy(u)(\pp(u)-1)\, \d u 
		- \log 2
		+
		\sfrac{\log 2}{u_*(\yy)} \label{eq:Psi*4.3.2}.
	\end{align}
	By the definition of $u_*$ in \eqref{eq:Psi*0.75}, it holds for $u\in[0,u_*(\yy))$ that $\yy(u) < \frac{\log 2}{u_*(\yy)^2}$. Also, $\yy(u) \geq \frac{\log 2}{u_*(\yy)^2}$ for $u \in(u_*(\yy),1)$. Since we assumed that $\pp\leq 1$ on $[0,1)$, applying this to the right-hand side of \eqref{eq:Psi*4.3.2} gives, 
	\begin{align}\label{eq:Psi*4.5}
		h_p(\yy)
		&\leq 
		\int_0^{u_*(\yy)} \sfrac{\log 2}{u_*(\yy)^2} \pp(u)\, \d u
		+ 
		\int_{u_*(\yy)}^1 \sfrac{\log 2}{u_*(\yy)^2} (\pp(u)-1)\, \d u 
		- \log 2
		+
		\sfrac{\log 2}{u_*(\yy)} \notag\\
		&=
		\sfrac{\log 2}{u_*(\yy^2)}
		\left(\norm{\pp}_1 - (u_*(\yy)-1)^2\right) 
		=
		g(u_*(\yy)),
	\end{align}
	where for $x\in (0,1)$,
	\begin{align}
		g(x) \coloneqq \sfrac{\log 2}{x^2}
		\kl{\norm{\pp}_1 - (x-1)^2}.
	\end{align}
	Note that $g$ attains its maximum on $(0,1)$ in $1-\norm{\pp}_1$ with maximum value $\sfrac{\norm{\pp}_1 \log 2}{1-\norm{\pp}_1}$.
	Thus, taking the supremum over $\yy$ in \eqref{eq:Psi*4.5} gives 
	\begin{align}\label{eq:Psi*5}
		\sup_{\yy\in Q_2,u_*(\yy)<1} 
		h_{\pp}(\yy) 
		\leq \sfrac{\norm{\pp}_1 \log 2}{1-\norm{\pp}_1}.
	\end{align}
	Combining \eqref{eq:Psi*3}, \eqref{eq:Psi*4} and \eqref{eq:Psi*5}, we get that
	\begin{align}
		\sup_{y\in Q_2} h_{\pp}(\yy)
		\leq 
		\max
		\left\{
		\norm{\pp}_1\log 2,
		\sfrac{\norm{\pp}_1\log 2}{1-\norm{\pp}_1}
		\right\}
		= \sfrac{\norm{\pp}_1\log 2}{1-\norm{\pp}_1},
	\end{align}
	where the equality in the last step follows from the fact that $1-\norm{\pp}_1\in (0,1)$. Thus,
	\begin{equation*}
		\Psi^*(\pp) =  \sup_{y\in Q_2} h_{\pp}(\yy) + \log 2 \leq \sfrac{\norm{\pp}_1\log 2}{1-\norm{\pp}_1} + \log 2 = \sfrac{\log 2}{1-\norm{\pp}_1}.\qedhere
	\end{equation*}
	
	\section{Outlook}
	\label{sec:outlook}

    \subsection{Removing the Left Derivative Hypothesis}\label{sec:rem case}

    We claim that to remove the left derivative hypothesis in Theorem~\ref{thm:main}, it will suffice to study the random energy model (REM). Recall that the REM corresponds to the special case when the covariance function $A(x) = \1_{x = 1}$, and it's 
 Hamiltonian takes a particularly simpler form:
		\[
		H_N^{\1_{x = 1}}(\sigma) = \sqrt{N} z_{\sigma}
		\]	
		where the $(z_{\sigma})_{\sigma \in \Sigma}$ are independent Gaussians. Suppose that we can show that 
        \begin{equation}\label{eq:leftderiv1}
         \liminf_{N\rightarrow\infty} F_N^{\1_{x = 1}}(t,q) \geq \liminf_{N\rightarrow\infty} F_N^{\mathrm{id}}(t,q).
        \end{equation}
    Then by Lemma~\ref{lem:GP} and the fact that $ \1_{x = 1} \leq A \leq \mathrm{id}$ for any convex $A$, we conclude that
    \begin{equation}\label{eq:leftderiv2}
    F_N^{\1_{x = 1}}(t,q)  \leq F_N^{A}(t,q)  \leq F_N^{\mathrm{id}}(t,q) .
    \end{equation}
    Combining \eqref{eq:leftderiv1} and \eqref{eq:leftderiv2} and applying Theorem~\ref{thm:main} to $A = \mathrm{id}$ will give that 
    \[
    \lim_{N\rightarrow\infty} F_N^{\1_{x = 1}}(t,q) = \lim_{N\rightarrow\infty}  F_N^{A}(t,q) =  \lim_{N\rightarrow\infty}  F_N^{\mathrm{id}}(t,q) = \sup_{\lambda\in [0,1)}
			\left\{
			\lambda t 
			+ \int_{1-\lambda}^1 \q(u) \d u
			- \frac{\log 2}{1-\lambda}
			\right\}.
    \]
    
    Unfortunately, computing the limit of $F_N^{\1_{x = 1}}(t,q) $ cannot be deduced from Theorem~\ref{thm:main} by a simple continuity argument. We sketch the main barrier below.
    
    Let $A_p(x) = x^p$. Notice that as $p \to \infty$, $A_p(x) \to \1_{x = 1}$ pointwise. We have that the Hamiltonian can be written as
	\[
	H_{N,p}(x) = \sqrt{N} \sum_{i = 1}^N \bigg( A_p\Big( \frac{i}{N} \Big) - A_p\Big( \frac{i - 1}{N} \Big) \bigg)^{1/2} z_{\restr{\s}{i}}.
	\]
	If we consider the free energy
	\begin{align}
		F_N(t,\q_M;p) 
		&\coloneqq - \frac{1}{N} \EX{\textstyle
			\log\kl{ \sum_{\alpha \in \N^M} v_\a \sum_{\sigma \in \{-1,1\}^N}   
				\exp\bkl{H_{N,p}(t,\q_M,\s,\a)}}
		}
        \nonumber
	\end{align}
	where
	\begin{align*}
		H_{N,p}(t,\q_M,\s,\a)&\coloneqq\sqrt{2t}\, H_{N,p}(\sigma) - Nt  + \sqrt{2}\,Y_{\q_M}(\sigma,\alpha) - N q_M.
	\end{align*}
	Then we see that uniformly in $t,\q_M$, we have that the Lipschitz constant of $p \mapsto F_N(t,\q_M;p) $ is bounded from above by $p$. As usual, let $\langle \cdot \rangle_p$ denote the Gibbs average with respect to $H_{N,p}(t,\q_M,\s,\a)$. This is because of an integration by parts,
	\begin{align*}
		&\partial_p F_N(t,\q_M;p) 
        \nonumber \\
        &= -\frac{1}{N}\E\langle \partial_p 	H_{N,p}(x)  \rangle_p
		\\&= -\frac{p}{2N}\E \bigg\langle \sqrt{N} \sum_{i = 1}^N \bigg( A_p\Big( \frac{i}{N} \Big) - A_p\Big( \frac{i - 1}{N} \Big) \bigg)^{-1/2}  \bigg( A_{p-1}\Big( \frac{i}{N} \Big) - A_{p-1}\Big( \frac{i - 1}{N} \Big) \bigg) z_{\restr{\s}{i}}  \bigg\rangle_p
		\\&\leq \frac{p}{2}\E \bigg\langle  A_{p - 1}\Big( \frac{\sigma^1 \wedge \sigma^2}{N} \Big) - A_{p - 1}\Big(1 \Big) \bigg\rangle_p 
	\end{align*}
	so
	\[
	|	\partial_p F_N(t,\q_M;p) | \leq \frac{p}{2}.
	\]
	This is unfortunate because by the simple bound we do not get a uniform Lipschitz constant, so taking $p \to \infty$ will not work. Instead, one will have to compute the limit of $F_N^{\1_{x = 1}}(t,q)$ directly, which can potentially be done using the explicit computation of Section~\ref{sec:initalCondition}.

	\subsection{Outside the Weak Correlation Regime} \label{sec:outside}

	We now explore what happens for non-convex covariance functions. In the CREM, recall from \eqref{eq:3FECREMFormula} that the limit free energy is given by 	\begin{equation}\label{eq:TRUE_FE}
		f^{A}_{CREM}(t, 0) = t \hat A(x(t)) - (1-x(t)) \log 2 - 2 \sqrt{t \log 2} \int_0^{x(t)} \sqrt{\hat A'(x)}\, \d x, 
	\end{equation}
	where $x(t) = \sup\kl{x\in (0,1)\colon \hat A'(x) \geq \frac{\log 2}{t}}$. This formula holds even for $A$ that are not necessarily convex. However, there appear to be some technical barriers that prevent an extension of the simple variational formula to this setting. In fact, we will show in this section that we can explicitly solve the variational formula for a simple non-convex covariance function and prove that the variational formula is a strict lower bound.

	\iffalse
	The following result shows that the limiting free energy is always a supersolution if $A$ is concave. 
	\begin{proposition}
		Let  $F_N$ be the enriched free energy of the CREM with concave covariance function $A$, see \eqref{eq:3FECREMFormula}. Let $\Psi$ be as in Theorem~\ref{thm:InitCond}.
		Then, the unique viscosity solution $f$ of \eqref{eq:HJ2} with $f(0,\cdot) = \Psi$ from Proposition~\ref{prop:HopfFormulaCREMNeu} satisfies 
		\begin{align}\label{eq:freeEnEqualsViscosSol}
			f(t,\q) &\geq \lim_{N\uparrow\infty} F_N(t,\q),
		\end{align}
		for all $t\geq 0$ and $\q\in Q_2$.
	\end{proposition}
	\fi
	To this end, we consider the a simple two speed non-convex function, which is parameterized by $(\theta,c)$ 
	\[
	A_{\theta,c}(x) = \theta_1 x \1( x \leq c ) + [\theta_2 (x - c) + \theta_1 c] \1( x > c )
	\]
	where $\theta_1 := \theta > 1$ is the slope on $[0,c]$ and $\theta_2 := \frac{1 - \theta_1 c}{1 - c}$ is the slope on $[c,1]$. 
	Since $A$ must be increasing, we require that $\theta_1 c < 1$. When applied to the piecewise linear covariance function $A_{\theta,c}(x)$ the limit in \eqref{eq:TRUE_FE} simplifies to the following:
	\begin{lemma}
		\label{lem:f*}
		For the non-linearity given by $A_{\theta,c}$ we have
		\[
    f^{A_{\theta,c}}_{CREM}(t,0) = \begin{cases}
			- \log 2&t \leq \frac{\log 2}{\theta_1}\\
			c \theta_1 t - (1-c) \log 2 - 2c \sqrt{\theta_1 (\log 2) t}& \frac{\log 2}{\theta_1} < t \leq \frac{\log 2}{\theta_2}\\
			t - 2 \sqrt{\log 2 t} (c \sqrt{\theta_1} + (1-c ) \sqrt{\theta_2})& \frac{\log 2}{\theta_2} < t 
		\end{cases}.
		\]
	\end{lemma}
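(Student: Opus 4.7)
The plan is to apply the known CREM free energy formula \eqref{eq:3FECREMFormula} directly, since it is valid for any covariance function in the stated class (not only convex ones). The first step is to observe that $A_{\theta,c}$ is concave: since $\theta_1 c < 1$ and $\theta_1 > 1$, we have
\begin{align*}
\theta_2 = \frac{1 - \theta_1 c}{1 - c} < \frac{1 - c}{1 - c} = 1 < \theta_1.
\end{align*}
Because $A_{\theta,c}$ is piecewise linear with strictly decreasing slopes, it coincides with its own concave hull, so $\hat A = A_{\theta,c}$ and its right derivative is the two-step function
\begin{align*}
\hat A'(x) = \theta_1 \1_{[0,c)}(x) + \theta_2 \1_{[c,1)}(x).
\end{align*}

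Next, I would determine $x(t) = \sup\{x \in (0,1) : \hat A'(x) \geq \log 2 / t\}$ by comparing $\log 2/t$ to the two slopes. When $t \leq \log 2/\theta_1$, one has $\log 2 / t \geq \theta_1 > \theta_2$, so the defining set is empty and $x(t) = 0$. When $\log 2/\theta_1 < t \leq \log 2 / \theta_2$, one has $\theta_2 \leq \log 2/t < \theta_1$, so only the interval where $\hat A' = \theta_1$ satisfies the constraint and $x(t) = c$. Finally, when $t > \log 2/\theta_2$, the constraint is satisfied on all of $(0,1)$, giving $x(t) = 1$.

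The final step is to substitute these three cases into \eqref{eq:3FECREMFormula}, evaluating the integral $\int_0^{x(t)} \sqrt{\hat A'(x)}\,\d x$ as $0$, $c \sqrt{\theta_1}$, and $c \sqrt{\theta_1} + (1-c)\sqrt{\theta_2}$ respectively. In the first case only the $-(1-0)\log 2 = -\log 2$ term survives; in the second, $\hat A(c) = \theta_1 c$ gives $c\theta_1 t - (1-c)\log 2 - 2c\sqrt{\theta_1 (\log 2) t}$; in the third, $\hat A(1) = 1$ and the $(1-x(t))\log 2$ term vanishes, producing $t - 2\sqrt{(\log 2) t}(c\sqrt{\theta_1} + (1-c)\sqrt{\theta_2})$.

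Since the argument is a direct substitution into an already-established formula, there is no substantial obstacle. The only auxiliary checks are that $A_{\theta,c}$ satisfies the standing hypotheses on covariance functions (right-continuous, increasing, $A_{\theta,c}(0)=0$, $A_{\theta,c}(1)=1$, which follow immediately from $\theta_2(1-c)+\theta_1 c = 1$ and $\theta_1,\theta_2 > 0$), and that the three pieces agree at the thresholds $t = \log 2/\theta_1$ and $t = \log 2/\theta_2$, which is a one-line computation in each case confirming continuity of $t \mapsto f^{A_{\theta,c}}_{CREM}(t,0)$.
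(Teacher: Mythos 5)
Your proof is correct and follows exactly the route the paper intends: Lemma~\ref{lem:f*} is stated without a separate proof, the surrounding text making clear that it is a direct substitution of $A_{\theta,c}$ into the Bovier--Kurkova formula \eqref{eq:3FECREMFormula} after noting that $A_{\theta,c}$ is concave (so $\hat A = A_{\theta,c}$) with two-step right derivative. Your case analysis for $x(t)$, the evaluations of $\hat A(x(t))$ and $\int_0^{x(t)}\sqrt{\hat A'(x)}\,\d x$, and the boundary continuity checks at $t=\log 2/\theta_1$ and $t=\log 2/\theta_2$ are all accurate.
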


	We compare this with the solution for the variational formula. For the piecewise linear non-convex covariance function, we define the naive extension of the variational formula by
	\begin{align}
		f_{var}^{A_{\theta,c}}(t,\q)=\sup_{\pp \in Q_2} \inf_{\yy\in Q_2} \brac{\Psi(\yy) + \int_0^1 \pp(u) \cdot (\q(u)-\yy(u)) \dd u + t \int_0^1 A(\pp(u)) \dd u} \label{eq:var_prob}
	\end{align}
	where
	\begin{align}
		\Psi(\q)  = -\log 2 + \int_0^1 \brac{\q(u) - \sfrac{\log 2}{u^2}}_+ \dd u. 
	\end{align}
	This can be explicitly solved, which is the main result of the following lemma.
	\begin{lemma}\label{lem:var_simplified}
		For the non-linearity given by $A_{\theta,c}$ we have
		\[
    f_{var}^{A_{\theta,c}}(t,0) = \sup_{\pp \in Q_2 \cap \{ \|\pp\|_{\infty} \leq 1 \}}  \bigg(  -\frac{\log 2}{1 - \| \pp \|_1} + t \int_0^1 A_{\theta,c}(\pp(u)) \dd u \bigg).
		\]
	\end{lemma}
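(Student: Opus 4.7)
The plan is to evaluate the inner infimum over $\yy$ in \eqref{eq:var_prob} via Fenchel--Legendre duality and then substitute the explicit formula for the convex conjugate $\Psi^{\ast}$ supplied by Proposition~\ref{prop:HJCremPsiCOnvexDual}; the rest is a purely algebraic rewriting.

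First I would specialize \eqref{eq:var_prob} to $\q = 0$ and note that the summand $t \int_0^1 A_{\theta,c}(\pp(u)) \dd u$ is independent of $\yy$, so it passes through the inner infimum:
\[
f_{var}^{A_{\theta,c}}(t,0) = \sup_{\pp \in Q_2} \left\{ \inf_{\yy \in Q_2} \left( \Psi(\yy) - \int_0^1 \pp(u)\yy(u) \dd u \right) + t \int_0^1 A_{\theta,c}(\pp(u)) \dd u \right\}.
\]
Comparing with the definition \eqref{eq:aslkfjeCD} of $\Psi^{\ast}$, the inner infimum above is precisely $-\Psi^{\ast}(\pp)$, so the double variational problem collapses to
\[
f_{var}^{A_{\theta,c}}(t,0) = \sup_{\pp \in Q_2} \left\{ -\Psi^{\ast}(\pp) + t \int_0^1 A_{\theta,c}(\pp(u)) \dd u \right\}.
\]

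Finally I would invoke Proposition~\ref{prop:HJCremPsiCOnvexDual}, which yields $\Psi^{\ast}(\pp) = \log 2 / (1-\norm{\pp}_1)$ when $\norm{\pp}_\infty \leq 1$ and $\norm{\pp}_1 < 1$, and $\Psi^{\ast}(\pp) = +\infty$ otherwise. Paths $\pp$ violating $\norm{\pp}_\infty \leq 1$ contribute $-\infty$ to the objective, so the supremum can be restricted to the slice $Q_2 \cap \{\norm{\pp}_\infty \leq 1\}$ with the convention that $-\log 2 / (1-\norm{\pp}_1) = -\infty$ at $\norm{\pp}_1 \geq 1$, which is harmless since the supremum is never attained there. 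Substituting this explicit formula for $\Psi^{\ast}$ then produces the stated identity. No serious obstacle is anticipated: the bulk of the analysis is absorbed into Proposition~\ref{prop:HJCremPsiCOnvexDual}, and the only conceptual input left is the Fenchel identity $\inf_{\yy}(\Psi(\yy) - \int_0^1 \pp\, \yy\, \dd u) = -\Psi^{\ast}(\pp)$, which is immediate from the definition of the convex conjugate.
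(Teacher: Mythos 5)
Your proof is correct, and it takes a genuinely cleaner route than the paper's. The paper specializes the formula for $\Psi$ inside the inner infimum, introduces $u_*(\yy)$, argues by hand that the minimizing $\yy$ must be constant (since the objective wants $\yy$ large on $[0,u_*]$ and small on $[u_*,1]$, but $\yy$ is increasing), and then performs a one-dimensional minimization over the constant value $y$; that calculation essentially re-derives, inside Lemma~\ref{lem:var_simplified}, the same bound that is already established in the proof of Proposition~\ref{prop:HJCremPsiCOnvexDual}. You instead notice that once the $\yy$-independent term $t\int_0^1 A_{\theta,c}(\pp)$ is pulled out, the inner infimum is exactly $-\Psi^*(\pp)$ by the definition \eqref{eq:aslkfjeCD} of the convex conjugate, and then simply substitute the explicit formula from Proposition~\ref{prop:HJCremPsiCOnvexDual}. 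This avoids the duplication, and your handling of the boundary cases is also right: paths with $\norm{\pp}_\infty>1$ yield $-\infty$ and can be discarded, while $\norm{\pp}_\infty\le 1$ forces $\norm{\pp}_1\le 1$ for increasing $\pp$, and at $\norm{\pp}_1=1$ both $-\Psi^*(\pp)$ and the stated formula read $-\infty$. The only thing the paper's longer argument buys is self-containment within Section~5; your version is shorter and makes the logical dependence on Proposition~\ref{prop:HJCremPsiCOnvexDual} transparent.
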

	\begin{proof}
		Notice that
		\begin{equation}\label{eq:var_forumla_2speed}
			f_{var}^{A_{\theta,c}}(t,0)=\sup_{\pp \in Q_2} \inf_{\yy\in Q_2} \brac{ \int_0^1 \brac{\yy(u) - \sfrac{\log 2}{u^2}}_+ \dd u - \int_0^1 \pp(u) \cdot \yy(u) \dd u + t \int_0^1 A_{\theta,c}(\pp(u)) \dd u - \log 2}.
		\end{equation}
		We solve this variational problem explicitly. Let $u_*(\yy) = \sup \{ u \in [0,1] \coloneqq \yy(u) < \frac{\log 2}{u^2} \}$. We can write \eqref{eq:var_forumla_2speed} as
		\begin{align*}
        f_{var}^{A_{\theta,c}}(t,0)= \sup_{\pp \in Q_2} \inf_{\yy\in Q_2} \brac{ \int_{u_*}^1 (1 - \pp(u) ) \yy(u)  \dd u - \int_0^{u_*} \pp(u)  \yy(u) \dd u + t \int_0^1 A_{\theta,c}(\pp(u)) \dd u - \frac{\log(2)}{u_*} }.
		\end{align*}
		First of all, notice that if $\pp(u) > 1$ for any $u$, then the minimum is attained at $-\infty$, so we can consider the case when $\| \pp \|_1 < 1$. Notice that $(1 - \pp(u)) \geq 0$ and $\pp(u) \geq 0$ on $Q_2$, so the function is minimized if $\yy(u)$ is as big as possible on $[0,x_*]$ and as small as possible on $[x_*,1]$. Since $\yy$ must be increasing, this implies that $\yy$ must be constant. For constant values of $\yy$, we have
		\[
		u_*(y) = \sqrt{\frac{\log 2}{y} } \wedge 1.
		\]
		Therefore, for any fixed $p \in  Q_2$ such that $\{ \|\pp\|_{\infty} \leq 1 \}$, we have that
		\begin{align*}
			&f_{var}^{A_{\theta,c}}(t,0) \nonumber \\
            &=  \inf_{y \geq 0} \brac{\int_{x_*}^1 (1 - \pp(u) ) y  \dd u - \int_0^{x_*} \pp(u)  y \dd u + t \int_0^1 A(\pp(u)) \dd u - \frac{\log(2)}{u_*}}
			\\
            &= \min\bigg( 
            \inf_{y \leq \log 2} \brac{ - y\|\pp \|_1 + t \int_0^1 A_{\theta,c}(\pp(u)) \dd u - \log 2},
            \nonumber \\
            & \qquad\qquad\qquad\qquad\qquad\qquad\qquad\qquad
            \inf_{y \geq \log 2} y  - y \|\pp \|_1 - 2\sqrt{y \log 2} + t \int_0^1 A_{\theta,c}(\pp(u)) \dd u   
            \bigg)
			\\&= \min\brac{ -\log 2 (1 + \|\pp \|_1) + t \int_0^1 A_{\theta,c}(\pp(u)) \dd u -  \log 2, -\frac{\log 2}{1 - \| \pp \|_1} + t \int_0^1 A_{\theta,c}(\pp(u)) \dd u }
			\\&=  -\frac{\log 2}{1 - \| \pp 
            \|_1} + t \int_0^1 A(\pp(u)) \dd u,
		\end{align*}
        and the proof is completed.
	\end{proof}
	
	Using the simplified formula in Lemma~\ref{lem:var_simplified}, we can explicitly solve the variational formula for the two-piece covariance function. 
	
	\begin{lemma}\label{lem:solution-two-speed}
		For any $c \in [0,1]$ and $0 < \theta < \frac{1}{c}$, the variational problem \eqref{eq:var_prob} with non-linearity given by $A_{\theta,c}$ has a unique solution given by
		\begin{equation}
			f_{var}^{A_{\theta,c}}(t,0) =  \begin{cases}
				- \log 2, &t \leq \frac{\log 2}{\theta_1},\\
				\theta_1 t - 2 \sqrt{ \ln2 \theta_1 t }, & \frac{\log 2}{\theta_1} < t \leq \frac{\log 2}{(1-c^2)\theta_1}, \\
				\theta_1 ct - \frac{\ln 2}{1 - c}, & \frac{\log 2}{(1-c^2)\theta_1} < t \leq \frac{\ln 2}{(1 - c)^2 \theta_2}, \\
				t - 2\sqrt{\ln 2 \theta_2 t }, & \frac{\ln 2}{(1 - c)^2 \theta_2}  < t.  
			\end{cases}\label{eq:solution-two-speed}
		\end{equation}
	\end{lemma}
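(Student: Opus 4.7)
The plan is to reduce the variational problem in Lemma~\ref{lem:var_simplified} to a one-dimensional optimization by exploiting the concavity of the piecewise linear function $A_{\theta,c}$. Since $\theta_1 > 1 > \theta_2 > 0$, $A_{\theta,c}$ is concave on $[0,1]$. By Jensen's inequality applied to the probability measure $\d u$ on $[0,1]$,
\[
\int_0^1 A_{\theta,c}(\pp(u))\, \d u \leq A_{\theta,c}\bigl(\|\pp\|_1\bigr),
\]
with equality achieved by the constant path $\pp \equiv \|\pp\|_1$ (a valid element of $Q_2 \cap \{\|\pp\|_\infty \leq 1\}$). Since the other summand in the objective depends only on $\|\pp\|_1$, the supremum is achieved by a constant path, and
\[
f_{var}^{A_{\theta,c}}(t,0) = \sup_{m \in [0,1)} g(m), \qquad g(m) := -\frac{\log 2}{1-m} + t A_{\theta,c}(m).
\]

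Next I analyze $g$ on the two linear pieces $[0,c]$ and $[c,1)$ separately, using elementary calculus. On $[0,c]$, $g'(m) = -\log 2/(1-m)^2 + t\theta_1$ vanishes at $m_1^\star = 1 - \sqrt{\log 2/(t\theta_1)}$; on $[c,1)$, $g'(m) = -\log 2/(1-m)^2 + t\theta_2$ vanishes at $m_2^\star = 1 - \sqrt{\log 2/(t\theta_2)}$. Since $g$ is strictly concave on each piece, the restricted maximum on a piece is attained at the interior critical point when it is feasible, and at the appropriate endpoint otherwise. This gives four cases: (i) if $m_1^\star \leq 0$, the global maximum is at $m=0$ with value $-\log 2$; (ii) if $0 \leq m_1^\star \leq c$, the maximum is at $m_1^\star$ with value $t\theta_1 - 2\sqrt{(\log 2)\theta_1 t}$; (iii) if $m_1^\star > c$ and $m_2^\star < c$, then $g$ is increasing on $[0,c]$ and decreasing on $[c,1)$, so the maximum is at the kink $m=c$ with value $t\theta_1 c - \log 2/(1-c)$; (iv) if $m_2^\star \geq c$, the maximum is at $m_2^\star$, and using $\theta_1 c + \theta_2(1-c) = 1$ one simplifies to $t - 2\sqrt{(\log 2)\theta_2 t}$.

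The regime thresholds arise directly from the conditions $m_i^\star = 0$, $m_1^\star = c$, and $m_2^\star = c$, namely $t = \log 2 / \theta_1$, $t = \log 2 / ((1-c)^2 \theta_1)$, and $t = \log 2 / ((1-c)^2 \theta_2)$ respectively. I do not anticipate any serious obstacle; the routine step is algebraically checking continuity across the transitions, which amounts to verifying that the difference of the adjacent values factors as a perfect square of the form $\bigl(\sqrt{t\theta_i(1-c)} - \sqrt{\log 2/(1-c)}\bigr)^2$, vanishing at the claimed boundary. The only delicate point is making sure that within the middle regime the kink $m=c$ is indeed a global maximum (and not merely a local one on each piece), which is automatic by the monotonicity of $g'$ on each side and the sign analysis just described.
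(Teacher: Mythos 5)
Your proof is correct and closely parallels the paper's argument for the upper bound: both reduce the variational problem to a one-dimensional optimization
\[
g(m) = -\frac{\log 2}{1-m} + t\,A_{\theta,c}(m)
\]
via Jensen's inequality, exploiting the concavity of $A_{\theta,c}$. Where you differ is in establishing that the Jensen bound is sharp. The paper exhibits two-level step functions taking values in $\{0,c,1\}$ and optimizes over their jump positions to recover the value $tA_{\theta,c}(\lambda)$; your observation that the \emph{constant} path $\pp \equiv \|\pp\|_1$ already saturates Jensen's inequality exactly (since $\int_0^1 A(\pp(u))\,\d u = A(\|\pp\|_1)$) is simpler and gets to the same place with no construction at all. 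Your case analysis of $g$ is also well organized: noting that $g'$ is globally decreasing, so $g$ is concave on $[0,1)$ despite the kink at $c$, cleanly handles the global-versus-local maximum issue.

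One point of substance: your derivation places the boundary between the second and third regimes at $t = \frac{\log 2}{(1-c)^2\theta_1}$, obtained from the condition $m_1^\star = c$. The lemma as printed in the paper writes $\frac{\log 2}{(1-c^2)\theta_1}$ there. Your value is the correct one. Indeed, the difference of the adjacent pieces is
\[
\bigl(\theta_1 t - 2\sqrt{(\log 2)\theta_1 t}\bigr) - \Bigl(\theta_1 c\, t - \tfrac{\log 2}{1-c}\Bigr)
= \Bigl(\sqrt{\theta_1 t(1-c)} - \sqrt{\tfrac{\log 2}{1-c}}\Bigr)^2,
\]
which vanishes precisely at $t = \frac{\log 2}{(1-c)^2\theta_1}$, confirming the continuity check you anticipated; the paper's $(1-c^2)$ must be a typographical slip for $(1-c)^2$.
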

	\begin{proof}
		Our goal is to compute
		\[
		f_{var}^{A_{\theta,c}}(t,0) = \sup_{\pp \in Q_2 \cap \{ \|\pp\|_{\infty} \leq 1 \}}  \bigg(  -\frac{\log 2}{1 - \| \pp \|_1} + t \int_0^1 A(\pp(u)) \dd u \bigg).
		\]
		\textit{1. Upper Bound:} By Jensen's inequality, since $p \in Q_2$ is increasing, we have
		\[
		\sup_{\pp \in Q_2 \cap \{ \|\pp\|_{\infty} \leq 1 \}}  \bigg(  -\frac{\log 2}{1 - \| \pp \|_1} + t \int_0^1 A(\pp(u)) \dd u \bigg) \leq \sup_{\pp \in \{ \| \pp\|_1 \leq 1 \}}  \bigg(  -\frac{\log 2}{1 - \| \pp \|_1} + t A_{\theta,c}( \| \pp \|_1 ) \bigg).
		\]
		We have reduced the problem to a one-dimensional optimization problem 
		\begin{align}
			f_{var}^{A_{\theta,c}}(t,0) \leq h(\lambda) &= \sup_{0 \leq \lambda \leq 1}  \bigg(  -\frac{\log 2}{1 - \lambda} + t A_{\theta,c}( \lambda )   \bigg) \nonumber
			\\&= \sup_{0 \leq \lambda \leq 1}  \bigg(  -\frac{\log 2}{1 - \lambda} + t   c \theta_1 \lambda \1(\lambda < c) + t( c \theta_1  + (\lambda - c)\theta_2  ) \1(\lambda \geq c)  \bigg) .\label{eq:upbound_2speed}
		\end{align}
		The critical points of this function satisfy
		\[
		h'(\lambda) = -\frac{\log 2}{(1 - \lambda)^2} + t ( \theta_1 \1( \lambda \in  [0,c) ) +  \theta_2 \1( \lambda \in  [c,1] ) ).
		\]
		Solving this maximization problem by using the critical point condition or substituting the boundary on the $4$ cases gives the representation in \eqref{eq:solution-two-speed}. 
		
		We will see that using Jensen's does not cost anything, and this inequality is, in fact, an equality. We will show in the next part of the proof that the maximizer is attained when $\pp$ is supported on a region of $[0,1]$ where $A_{\theta,c}$ is affine. 
		\\\\
		\textit{2. Lower Bound:} We now argue that the upper bound is sharp. First of all, if we restrict our maximizers to the set of step functions of the form
		\[
		\pp_{\xi_1, \xi_2, \lambda}( x ) = \lambda \1(x \in [1-\xi_2, 1-\xi_1]) + \1(x \in [1-\xi_1, 1] ),
		\]
		where $\|\pp\|_1 = \xi_1 + \lambda(\xi_2 - \xi_1) = \lambda$ and $0 < \xi_1 < \xi_2 < 1$, then we get an obvious lower bound
		\begin{align*}
			\sup_{\pp \in Q_2 \cap \{ \|\pp\|_{\infty} \leq 1 \}} t\int_0^1 A(\pp(u)) \dd u  &\geq \sup_{ p_{\xi_1, \xi_2, \lambda} } \bigg(  t\int_0^1 A(\pp_{\xi_1, \xi_2, d}(u)) \dd u \bigg)\\
			&\geq \sup_{ \substack{ \xi_1 + c(\xi_2 - \xi_1) = \lambda \\ 0 < \xi_1 < \xi_2 < 1 } } \bigg( (\xi_2 - \xi_1)\theta_1 ct + \xi_1 t \bigg)
			\\&= \sup_{ \frac{\lambda - c}{1-c} \wedge 0 \leq \xi_1 \leq \lambda } \big( (  \lambda - \xi_1) \theta_1 + \xi_1 \big)t,
		\end{align*}
		which can be solved explicitly,
		\[
		\sup_{ \frac{\lambda - c}{1-c} \wedge 0 \leq \xi_1 \leq \lambda } \big( (  \lambda - \xi_1) \theta_1 + \xi_1 \big)t = \sup_{0 \leq \lambda \leq 1} ( \lambda \theta_1 t \1(\lambda < c) + t( c \theta_1  + (\lambda - c)\theta_2  ) \1(\lambda \geq c) ) = \sup_{0 \leq \lambda \leq 1} tA_{\theta,c}(\lambda).
		\]
		Therefore, we have the lower bound
		\[
		f_{var}^{A_{\theta,c}}(t,0) \geq  \sup_{0 \leq \lambda \leq 1}  \bigg(  -\frac{\log 2}{1 - \lambda} + t A_{\theta,c}( \lambda )   \bigg),
		\]
		which is identical to \eqref{eq:upbound_2speed}.
	\end{proof}
	
	By Lemma~\ref{lem:f*} and Lemma~\ref{lem:solution-two-speed}, we see that the free energy $f^{A_{\theta,c}}_{CREM}(t,0)$ is bounded strictly from above by $f_{var}^{A_{\theta,c}}(t,0)$. 
	\begin{proposition}\label{prop:one-sided}
		For any $c \in [0,1]$ and $0 < \theta < \frac{1}{c}$, and covariance function $A_{\theta,c}$ we have
		\[
		\lim_{N\uparrow\infty} F_N(t,0) = f^{A_{\theta,c}}_{CREM}(t,0) < f_{var}^{A_{\theta,c}}(t,0) . 
		\]
	\end{proposition}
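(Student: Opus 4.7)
The plan is to prove the two assertions separately: the equality is an immediate consequence of the Bovier--Kurkova formula, while the strict inequality reduces to comparing closed-form expressions branch by branch. For the equality, I would note that $A_{\theta,c}$ is concave since $\theta_1>1>\theta_2$, so it coincides with its own concave hull, and plugging the three possible values $x(t)\in\{0,c,1\}$ into \eqref{eq:3FECREMFormula} reproduces exactly the three pieces stated in Lemma~\ref{lem:f*}; combined with the known CREM free-energy convergence this yields $\lim_{N\uparrow\infty}F_N(t,0)=f^{A_{\theta,c}}_{CREM}(t,0)$.

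For the strict inequality, compare the four-branch expression of Lemma~\ref{lem:solution-two-speed} against the three-branch expression of Lemma~\ref{lem:f*}. In the high-temperature range $t\leq\log 2/\theta_1$ both sides equal $-\log 2$, so the claim is meaningful only for $t>\log 2/\theta_1$. The cleanest case occurs in the window where $f_{var}^{A_{\theta,c}}(t,0)=\theta_1 t-2\sqrt{\theta_1 t\log 2}$ and simultaneously $f^{A_{\theta,c}}_{CREM}(t,0)=c\theta_1 t-(1-c)\log 2-2c\sqrt{\theta_1 t\log 2}$. A direct subtraction together with the algebraic identity $a-2\sqrt{ab}+b=(\sqrt{a}-\sqrt{b})^{2}$ collapses the difference to
\begin{equation*}
f_{var}^{A_{\theta,c}}(t,0)-f^{A_{\theta,c}}_{CREM}(t,0)=(1-c)\bigl(\sqrt{\theta_1 t}-\sqrt{\log 2}\bigr)^{2},
\end{equation*}
which is strictly positive whenever $c\in(0,1)$ and $t>\log 2/\theta_1$.

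For the remaining sub-regimes (in which $f_{var}^{A_{\theta,c}}$ is either on its plateau $c\theta_1 t-(\log 2)/(1-c)$ or in its final branch $t-2\sqrt{\theta_2 t\log 2}$, and $f^{A_{\theta,c}}_{CREM}$ is in its middle or final branch), each difference can be written as a quadratic in $\sqrt{t}$ whose coefficients simplify using the affine constraint $c\theta_1+(1-c)\theta_2=1$; non-negativity follows by checking that the associated discriminant is non-negative and that $t$ lies within the interval of positivity dictated by the sub-regime boundaries. For instance, a direct comparison of the two $t-2\sqrt{\theta t\log 2}$ branches collapses to $2c\sqrt{t\log 2}\bigl(\sqrt{\theta_1}-\sqrt{\theta_2}\bigr)>0$, which is strictly positive since $\theta_1>\theta_2$. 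The main obstacle is thus purely book-keeping: the transition points of $f^{A_{\theta,c}}_{CREM}$ (at $\log 2/\theta_1$ and $\log 2/\theta_2$) do not coincide with those of $f_{var}^{A_{\theta,c}}$, so three overlapping sub-cases must be verified individually and no new idea beyond careful algebra is needed. Conceptually, the strict inequality reflects the failure of the Hopf representation for non-convex Hamiltonians: the variational maximizer $\pp^{*}$ in Lemma~\ref{lem:solution-two-speed} spreads mass continuously across $[0,c]$ whereas the Bovier--Kurkova optimum concentrates at the kink $x(t)=c$, and this mismatch is precisely what the factor $(1-c)\bigl(\sqrt{\theta_1 t}-\sqrt{\log 2}\bigr)^{2}$ quantifies.
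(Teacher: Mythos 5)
Your approach coincides with the paper's: the paper proves Proposition~\ref{prop:one-sided} simply by comparing the closed-form expressions from Lemma~\ref{lem:f*} and Lemma~\ref{lem:solution-two-speed} (``By Lemma~\ref{lem:f*} and Lemma~\ref{lem:solution-two-speed}, we see that\dots''), and your branch-by-branch subtraction supplies exactly the algebraic verification that the paper elides. The key identity $f_{var}-f_{CREM}=(1-c)\bigl(\sqrt{\theta_1 t}-\sqrt{\log 2}\bigr)^{2}$ in the overlap window, and the difference $2c\sqrt{t\log 2}\bigl(\sqrt{\theta_1}-\sqrt{\theta_2}\bigr)$ when both functions are on their final branch, are both computed correctly; for the equality part, your observation that $A_{\theta,c}$ is concave (so $\hat A = A$) together with $x(t)\in\{0,c,1\}$ reproduces Lemma~\ref{lem:f*} from \eqref{eq:3FECREMFormula}, which is precisely how the paper derives it.

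Two small remarks worth flagging. First, the strict inequality genuinely fails when $c=0$ (where $A_{\theta,0}=\mathrm{id}$) and on the high-temperature range $t\le\log 2/\theta_1$ where both sides equal $-\log 2$; the proposition should be read with $c\in(0,1)$, $\theta>1$, and $t>\log 2/\theta_1$ in mind, and you correctly note this. Second, the second threshold in Lemma~\ref{lem:solution-two-speed} should read $\log 2/\bigl((1-c)^{2}\theta_1\bigr)$ rather than $\log 2/\bigl((1-c^{2})\theta_1\bigr)$: solving $h'(\lambda)=0$ on $[0,c)$ gives $\lambda^{*}=1-\sqrt{\log 2/(t\theta_1)}$, which exits $[0,c)$ exactly at $t=\log 2/\bigl((1-c)^{2}\theta_1\bigr)$, and continuity of $h(\lambda^{*})$ across the transition holds only for this corrected value. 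This typo does not affect your argument: the overlap window you use is nonempty under the corrected bound, and with $t>\log 2/\bigl((1-c)^{2}\theta_1\bigr)$ the plateau-vs-middle comparison gives $2c\sqrt{\theta_1 t\log 2}-c(2-c)\log 2/(1-c)>c^{2}\log 2/(1-c)>0$, so the remaining cases also close.
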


    The fact that we have a one-sided bound in Proposition~\ref{prop:one-sided} is not too surprising. The function $A_{\theta,c}$ is concave, so the proof of Proposition~\ref{prop:supersolution} can be adapted to show that $f_{var}^{A_{\theta,c}}(t,0)$ is always a subsolution, that is
    \[
    \lim_{N\uparrow\infty} F_N(t,0) \leq f_{var}^{A_{\theta,c}}(t,0).
    \]
    However, as seen in Proposition~\ref{prop:one-sided}, the inequality is not sharp. There are some gaps in the current theory mainly caused by the lack of monotonicity in the non-convex case, so more work to extend the theory to this setting remains open. 
	
	\section*{Acknowledgments}
	
	The authors thank Pascal Maillard for his encouragement to pursue this line of research and for pointing out Biggins' shape theorem in \cite{biggins1978asymptotic}. The authors also thank Lisa Hartung, Jean-Christophe Mourrat, and Victor Issa for stimulating discussions.
	AA acknowledges support from ANR-DFG grant REMECO (No. 446173099) and TRR 146 (No.  233630050).
	FHH acknowledges support from Eliran Subag’s ISF grant (No. 2055/21) and ERC grant (No. 101165541, Horizon Europe). JK acknowledges the support of the Natural Sciences and Engineering Research Council of Canada (NSERC) [RGPIN-2020-04597, DGECR-2020-00199], the Canada Research Chairs programme, and the Ontario Research Fund.

	%\bibliographystyle{abbrv}
	%\bibliography{CREM_and_HJ}

	% \appendix
	
 %    \iffalse
 %    \section{Embedding into a Higher Dimensional Space}\label{app:embed}

 %    \todo{Mention the following somewhere?}
    
 %    \jk{
 %    In this section, we explain why we cannot embed the enriched CREM cannot be embedded into $\R^{2^{N+1}}$ as in the proof of Theorem~\ref{thm:main}: There, the reference measure $P_N$ is the uniform distribution on the embedding of $\leaves{N}$, so it is a discrete probability measure with $2^N$ point masses. To obtain a contradiction, assume that $P_N = P_1^{\otimes 2^{N+1}}$, where $P_1$ is a probability measure on $\R$. Since $P_N$ is discrete, $P_1$ must also be a discrete measure. Let $\ell\in \N$ be the number of point masses of $P_1$.  Then, $P_N$ has $\ell^{2^{N+1}}$~point masses, which contradicts the fact that $P_N$ has $2^N$ point masses. Thus, the embedding of the enriched CREM into $\R^{2^{N+1}}$ as in the proof of Theorem~\ref{thm:main} does not fit the setting of~\cite{ChenMourrat2023nonconvexVectorSpin}, so some work has to be done to establish the variational formula for the limiting free energy.
 %    }
 %    \fi

\end{document}